\documentclass[a4paper,11pt]{amsart}
\usepackage{graphicx}
\usepackage[english]{babel}
\usepackage{amsmath}
\usepackage{amsfonts}
\usepackage{amssymb}
\usepackage{amsthm}
\usepackage{mathtools}
\usepackage{tikz}
\usepackage{tikz-cd}
\usepackage{tikz-3dplot}
\usepackage{caption}
\usepackage{float}
\usepackage{enumerate}
\usepackage{bbm}
\usepackage{subcaption}

\usepackage{hyperref}
\usepackage[capitalize]{cleveref}
\usepackage{thm-restate}

\newtheorem{thm}{Theorem}[section]
\newtheorem{letterthm}{Theorem}

\newtheorem{lem}[thm]{Lemma}
\newtheorem*{lem*}{Lemma}
\newtheorem{prp}[thm]{Proposition}
\newtheorem{letterprp}[letterthm]{Proposition}

\newtheorem{cor}[thm]{Corollary}

\theoremstyle{definition}

\newtheorem{asn}[thm]{Assumption}
\newtheorem{ex}[thm]{Example}
\newtheorem{rmk}{Remark}

\newtheorem{lettercon}[letterthm]{Construction}

\Crefname{lettercon}{Construction}{Constructions}
\Crefname{constructionx}{Construction}{Constructions}

\Crefname{prp}{Proposition}{Propositions}
\Crefname{cor}{Corollary}{Corollaries}
\Crefname{lettercor}{Corollary}{Corollaries}
\Crefname{letterprp}{Proposition}{Propositions}
\Crefname{lem}{Lemma}{Lemmas}
\Crefname{thm}{Theorem}{Theorems}
\Crefname{letterthm}{Theorem}{Theorems}
\Crefname{clm}{Claim}{Claims}
\Crefname{dfn}{Definition}{Definitions}
\Crefname{asn}{Assumption}{Assumptions}
\Crefname{rmk}{Remark}{Remarks}
\Crefname{ex}{Example}{Examples}
\Crefname{constr}{Construction}{Constructions}
\Crefname{assumptions}{Assumptions}{Assumptions}
\Crefname{corollaryx}{Corollary}{Corollaries}
\Crefname{theoremx}{Theorem}{Theorems}
\Crefname{prpx}{Proposition}{Propositions}

\newcommand{\cA}{\mathcal{A}}
\newcommand{\cB}{\mathcal{B}}
\newcommand{\cC}{\mathcal{C}}
\newcommand{\cD}{\mathcal{D}}
\newcommand{\cE}{\mathcal{E}}
\newcommand{\cJ}{\mathcal{J}}
\newcommand{\cI}{\mathcal{I}}
\newcommand{\cL}{\mathcal{L}}

\newcommand{\cM}[1]{\mathcal{S}^{#1}}
\newcommand{\sinf}{\mathcal{S}_\infty}
\newcommand{\cS}{\mathcal{S}}
\newcommand{\cP}{\mathcal{P}}
\newcommand{\cQ}{\mathcal{Q}}
\newcommand{\cU}{\mathcal{U}}
\newcommand{\bbR}{\mathbb{R}}
\newcommand{\bbZ}{\mathbb{Z}}

\newcommand{\K}{\mathbb{K}}  
\newcommand{\mult}[2]{\mathrm{mult}(#1, #2)}
\newcommand{\intMod}[1]{\K_{#1}}  
\newcommand{\rk}{\mathrm{rk}}  

\newcommand{\lbracket}{\left[}
\newcommand{\rbracket}{\right]}
\newcommand{\amspand}{&}

\newcommand{\comment}[1]{}


\DeclareMathOperator{\im}{im}
\DeclareMathOperator{\colim}{colim}
\DeclareMathOperator{\coker}{coker}
\DeclareMathOperator{\Hom}{Hom}
\DeclareMathOperator{\Obj}{Obj}

\newcommand{\Vect}{\mathrm{Vect}}
\newcommand{\vect}{\mathrm{vect}}
\renewcommand{\mod}{\mathrm{mod}\,}
\newcommand{\Mod}{\mathrm{Mod}\,}

\newcommand{\setcat}{\mathrm{Set}}
\newcommand{\funct}[2]{\mathrm{Fun(#1,#2)}}

\newcommand{\id}{\mathrm{id}}  

\newcommand{\up}{{\uparrow}}  
\newcommand{\down}{{\downarrow}}
\newcommand{\defeq}{\coloneqq}  

\def\noteson{%
\gdef\samuel##1{\noindent{\color{blue}[Samuel: ##1]}}%
\gdef\justin##1{\noindent{\color{red}[Edit: ##1]}}%
\gdef\thomas##1{\noindent{\color{olive}[Thomas: ##1]}}%
\gdef\todo##1{\noindent{\color{violet}[to do: ##1]}}}

\noteson

\title{Generalized Rank via Minimal Subposet}
\subjclass[2020]{16G20, 18A30, 18A25 (Primary) 55N31 (Secondary)} 
\keywords{persistent homology, generalized rank invariant, final and initial functor, minimal final subposet, data analysis}

\author{Thomas Br\"ustle}
\author{Justin Desrochers}
\author{Samuel Leblanc}

\AtEndDocument{\bigskip{\small%
\textsc{D\'epartement de math\'ematiques, Universit\'e de Sherbrooke\\ Sherbrooke, QC J1K 2R1, Canada} \par
\textsc{Mathematics Department, Bishop's University\\ Sherbrooke, QC J1M 1Z7, Canada} \par
  \textit{E-mail address}: 
  \texttt{Thomas.Brustle@USherbrooke.ca} \par
  \addvspace{\medskipamount}
  \textsc{D\'epartement de math\'ematiques, Universit\'e de Sherbrooke\\ Sherbrooke, QC J1K 2R1, Canada} \par
  \textit{E-mail address}: 
  \texttt{Justin.Desrochers@USherbrooke.ca} \par
  \addvspace{\medskipamount}
  \textsc{Department of Mathematics and Statistics, Queen's University\\ Kingston, ON K7L 3N6, Canada} \par  
  \textit{E-mail address}: 
  \texttt{Samuel.Leblanc@QueensU.ca}
}}

\begin{document}

\begin{abstract}
Let $\cC$ be a small, connected category with finite hom-sets.  
We show that if the embedding of a connected subcategory $\cJ$ is both initial and final, then the restriction of any $\cC$-module along $\cJ$ preserves the generalized rank---or equivalently, the multiplicity of the “entire” interval modules for $\cC$ and $\cJ$. 
Conversely, we prove that this property characterizes initial and final embeddings when both $\cC$ and $\cJ$ are posets satisfying certain mild constraints and the embedding is full.  
For $\cC$ a poset under these conditions, we describe the minimal full subposet whose embedding is initial or final. 
This generalizes an observation made by Dey and Lesnick. 
We also extend a result of Kinser on the generalized rank invariant to small categories.
\end{abstract}

\maketitle

\section{Introduction}

\subsection{Motivation and Previous Results}

In topological data analysis, more specifically in multiparameter persistence theory (see, e.g., \cite{oudot2015, chazalmichel2021, botnanlesnick22} for an introduction), one studies \emph{persistence modules} $M$ over an $n$-dimensional grid $\cP$, typically modeled as a product of $n$ totally ordered sets. 
The representation theory of such posets is generally \emph{wild} and it is difficult to interpret all indecomposable summands of a given $\cP$-persistence module $M$. 
A common strategy in the literature is to focus on indecomposable summands that are easier to interpret, like the interval module $ \K_\cP$ which is one-dimensional over a fixed field $\K$ at every vertex of $\cP$. 
For a connected poset $\cP$, one can study the following two fundamental questions. 

\begin{enumerate}
    \item The interval module $ \K_\cP$ is indecomposable. 
    It is therefore natural to ask wether it is a direct summand of $M$ and if so, what is the multiplicity \( \mult{\K_\cP}{M} \) of \( \K_\cP \) in \( M \).
    
    \item Generalizing the rank invariant introduced by Carlsson and Zomorodian in \cite{carlssonzomorodian09}, one can take the limit and colimit of $M$ viewed as a diagram over $\cP$ and ask what is the rank \( \mathrm{rk}\Psi_M \) of the canonical map from the limit to the colimit (see \cite{kimmemoli2021}) $\Psi_M : \lim M \to \colim M$. 
\end{enumerate}
Remarkably, the numbers \( \mult{\K_\cP}{M} \) and \( \mathrm{rk}\Psi_M \) coincide. This equality was shown by Chambers and Letscher \cite{chambersletscher2019} in the context of persistence modules, and earlier by Kinser \cite{kinser2008} for representations of tree quivers. 

A unifying perspective in all of these cases is to view a persistence module \( M \) as a functor from a small category---either a poset \( \cP \) or a quiver \( Q \)---to the category of vector spaces over a fixed field \( \K \). Interestingly, the equality of \( \mult{\K_\cP}{M} \) and \( \mathrm{rk}\Psi_M \) relies only on categorical properties and not on specific features of posets or quivers. 
This suggests that rank and multiplicity should agree whenever $M$ is a diagram indexed by a small connected category, as we show in \cref{thm:generalized-rank-general}. 

For efficient computation of rank \( \mathrm{rk}\Psi_M \), it is natural to consider the following problem. 
\begin{enumerate}
\setcounter{enumi}{2}
    \item Find a subposet \( \cS \subseteq \cP \) such that, for any $\cP$-persistence module, the rank of the limit-to-colimit map for $M$ can be recovered from the limit-to-colimit map of the restriction $M|_\cS$. 
\end{enumerate}
In the language of categories, a connected subposet $\cS$ satisfies (3) whenever there is an inclusion functor $F:\cS\hookrightarrow \cP$  which is both \emph{final} and \emph{initial} (\cref{subsec-finial-initial}). As seen in our \cref{thm-colimit-preserving-restriction}, a poset functor $F$ is final (dually, initial) if and only if $M$ and $M|_\cS = MF$ have the same colimit (dually, limit) for every persistence module $M$ over $\cP$. 

For a two-dimensional grid posets, problem (3) was studied by Dey-Kim-M\'emoli in \cite{deykimmemoli2023}. 
They proved that if $\cP$ is a finite interval in $\mathbb{Z}^2$, then one can restrict to some zigzag subposet to compute the rank of the limit-to-colimit map. 
The subposet constructed in \cite{deykimmemoli2023} is initial and final (i.e., its embedding is an initial and final functor). 
More generally, Botnan-Oppermann-Oudot \cite[Proposition 3.9]{boo} gave a sufficient condition for restriction along a subposet $\cS \subseteq \bbR^n$ to preserve the \textit{generalized rank invariant}. 
This condition amounts to asking that the subposet is initial and final. 
Both of these results agree with our \cref{thm-colimit-preserving-restriction}. 

Given these results, we naturally ask what is the minimal (with respect to inclusion) full subposet such that its embedding is final, and dually, initial. 
In general, such a minimal full subposet need not exist, see, e.g., \cref{fig-exampleposets} and \cref{ex-no-min-fin-sub}. 

Under some assumptions on the poset $\cP$, we construct, in \cref{thm-minimal-final-subposet} and \cref{rmk-dual-minimal-final}, the minimal final and initial full subposets $ \sinf^{fin}$ and $ \sinf^{init}$.
In \cref{construction-M-rk}, we connect these two subposets to find a subposet $\cM{\rk}$ which is a solution to problem (3). 
This connected subposet $\cM{\rk}$ is in general neither unique nor a full subposet. In the case that $\cP$ is an interval in $\bbZ^2$, such as in \cref{ex-explicitcomputation}, $ \sinf^{fin}$ and $ \sinf^{init}$ are the upper and lower fences of \cite[Definition 3.5]{deykimmemoli2023} and certain choices of $\cM{\rk}$ agree with the notion of \textit{boundary cap} in \cite[Definition 3.7]{deykimmemoli2023}.

\subsection{Contributions}\label{sec-main-results}
We now present our main contributions, formulated in the most natural generality. 

In \cref{thm:generalized-rank-general}, we extend the relationship between problems (1) and (2) above to $\cC$-modules. 
\cref{prp-multiplicity-preserving-restriction} then follows by combining \cref{thm-final-preserves-colimit} with a classical result in category theory: the functor $F : \cJ \to \cC$ is final if and only if $M F$ and $M$ have the same colimit for all functors $M : \cC \to \cD$ (see \cref{thm-final-preserves-colimit}).

\begin{letterprp}\label{prp-multiplicity-preserving-restriction}\textbf{\emph{(Multiplicity Preserving Restriction).}} 
Let $\cJ$ and $\cC$ be small connected categories.
If $F : \cJ \to \cC$ is final and initial, then $\mult{\intMod{\cC}}{M} = \mult{\intMod{\cJ}}{MF}$ for all functors $M : \cC \to \Vect_\K$. 
\end{letterprp}

Our next result shows that, to check whether a functor is final, it suffices to consider only pointwise finite-dimensional $\cC$-modules.

\begin{letterthm}\label{thm-colimit-preserving-restriction}\textbf{\emph{((Co)limit Preserving Restriction).}} 
Let $\cJ$ and $\cC$ be small categories and suppose that $\cC$ has finite hom-sets.
A functor $F: \cJ \to \cC$ is final (resp. initial) if and only if $\colim MF \cong \colim M$ (resp. $\lim MF \cong \lim M$) for all functors $M : \cC \to \vect_\K$. 
\end{letterthm}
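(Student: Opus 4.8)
The forward implications are exactly the classical fact recalled in the introduction: a final functor preserves colimits along \emph{every} functor, in particular along every pointwise finite-dimensional module $M\colon\cC\to\vect_\K$, and dually an initial functor preserves all limits. So the content is the two converses, and I would prove the ``final'' statement first and then deduce the ``initial'' statement by passing to opposite categories.

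\emph{The final case.} Recall that $F\colon\cJ\to\cC$ is final if and only if, for every object $c\in\cC$, the comma category $c\downarrow F$ is nonempty and connected, i.e. the set $\pi_0(c\downarrow F)$ of connected components is a singleton. The plan is to detect this with one well-chosen family of test modules: the $\K$-linearizations $M_c\defeq\K[\Hom_\cC(c,-)]$ of the representable functors $\Hom_\cC(c,-)\colon\cC\to\setcat$. This is precisely where the finite-hom-set hypothesis enters — it is what makes each $M_c$ pointwise finite-dimensional, hence an admissible test module. Since the free-vector-space functor $\K[-]\colon\setcat\to\vect_\K$ is a left adjoint it commutes with colimits, so
\[
\colim_\cC M_c \;=\; \K\bigl[\colim_\cC\Hom_\cC(c,-)\bigr]\;=\;\K[\ast]\;=\;\K,
\]
as the colimit over $\cC$ of a representable is a one-point set (its category of elements is the coslice $c\downarrow\cC$, which has an initial object), while
\[
\colim_\cJ M_c F \;=\; \K\bigl[\colim_\cJ\Hom_\cC(c,F-)\bigr]\;=\;\K\bigl[\pi_0(c\downarrow F)\bigr],
\]
using that the colimit of a $\setcat$-valued functor is $\pi_0$ of its category of elements, together with the identification of the category of elements of $\Hom_\cC(c,F-)$ with $c\downarrow F$. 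The hypothesis $\colim M_cF\cong\colim M_c$ then says $\K[\pi_0(c\downarrow F)]\cong\K$, and since $\dim_\K\K[S]=\lvert S\rvert$ this forces $\lvert\pi_0(c\downarrow F)\rvert=1$. As $c$ was arbitrary, $F$ is final.

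\emph{The initial case.} I would use that $F$ is initial if and only if $F^{\mathrm{op}}\colon\cJ^{\mathrm{op}}\to\cC^{\mathrm{op}}$ is final, and that $\cC^{\mathrm{op}}$ is again small with finite hom-sets, so the case just proved applies to it. To convert the resulting colimit condition over $\cC^{\mathrm{op}}$ into the desired limit condition over $\cC$, transport along pointwise linear duality: $M\mapsto M^*$ is a bijection between pointwise finite-dimensional $\cC$-modules and pointwise finite-dimensional $\cC^{\mathrm{op}}$-modules, and since $\Hom_\K(-,\K)$ sends colimits to limits one has $\lim_\cC M\cong\bigl(\colim_{\cC^{\mathrm{op}}}M^*\bigr)^*$ and $\lim_\cJ MF\cong\bigl(\colim_{\cJ^{\mathrm{op}}}M^*F^{\mathrm{op}}\bigr)^*$. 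The point to be careful about is that linear duality is \emph{not} an equivalence on all of $\Vect_\K$ — a colimit of a pointwise finite-dimensional diagram over an infinite category can be infinite-dimensional — so one must not dualize an abstract isomorphism ``backwards''. Instead I would re-run the argument above with the dualized test modules $M^c\defeq\K[\Hom_\cC(-,c)]^*$ (the pointwise dual of the linearized co-representable, a genuine $\cC$-module), where everything is computed explicitly: $\lim_\cC M^c\cong\K$ while $\lim_\cJ M^c F\cong\bigl(\K[\pi_0(F\downarrow c)]\bigr)^*\cong\prod_{\pi_0(F\downarrow c)}\K$, and $\prod_{\pi_0(F\downarrow c)}\K\cong\K$ exactly when $F\downarrow c$ is nonempty and connected.

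\emph{Main obstacle.} The final case is short once one thinks of linearizing representables; the delicate part is the bookkeeping in the initial case — keeping variances straight under the passage to $\cC^{\mathrm{op}}$, and resisting the temptation to treat linear duality as an equivalence — together with the small but essential observation that the finite-hom-set hypothesis is exactly what makes the test modules legitimate (without it, one of the implications genuinely requires a different argument).
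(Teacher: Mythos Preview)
Your proof is correct and follows the same approach as the paper's: both use the linearized representables $M_c=\K[\Hom_\cC(c,-)]$ as test modules for finality, compute $\colim_\cC M_c\cong\K$, and show that $\colim_\cJ M_cF$ detects the connected components of $c\downarrow F$ (the paper by decomposing $M_cF$ as a direct sum over those components, you via the $\pi_0$-of-the-category-of-elements formula). For the initial case the paper likewise passes to opposites using the self-duality of $\vect_\K$; your explicit re-run with the dualized test modules $M^c$ is more careful than the paper's terse equivalence-of-categories argument about where linear duality can fail, but both are correct here since one side of the comparison is always one-dimensional.
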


We also obtain the corresponding result for (potentially) infinite-dimensional $\cC$-modules. 

\begin{cor}\label{cor-colimit-perserving-restriction-Mod}
Let $\cJ$ and $\cC$ be small categories and suppose that $\cC$ has finite hom-sets. 
A functor $F: \cJ \to \cC$ is final (resp. initial) if and only if $\colim MF \cong \colim M$ (resp. $\lim MF \cong \lim M$) for all functors $M : \cC \to \Vect_\K$. 
\end{cor}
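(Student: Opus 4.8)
The plan is to obtain \cref{cor-colimit-perserving-restriction-Mod} as a formal consequence of \cref{thm-colimit-preserving-restriction} together with the classical characterization of final (resp.\ initial) functors recalled in \cref{sec-main-results}, splitting into the two implications. For the forward implication, suppose $F$ is final. By the well-known result that a final functor is preserved under composition with \emph{any} functor $M : \cC \to \cD$ into an arbitrary category $\cD$, and since $\Vect_\K$ is cocomplete, we obtain $\colim MF \cong \colim M$ for every $M : \cC \to \Vect_\K$. The initial case is dual: an initial functor is preserved under composition with any $M : \cC \to \cD$, and $\Vect_\K$ is complete, so $\lim MF \cong \lim M$ for all $M : \cC \to \Vect_\K$. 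Note that this direction genuinely needs the general categorical fact and cannot be deduced from \cref{thm-colimit-preserving-restriction} alone, since the latter only concerns pointwise finite-dimensional input.

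For the converse, assume $\colim MF \cong \colim M$ for every $M : \cC \to \Vect_\K$. Since $\vect_\K$ is a full subcategory of $\Vect_\K$, every pointwise finite-dimensional module $M : \cC \to \vect_\K$ is in particular a functor $\cC \to \Vect_\K$. Moreover the colimit in \cref{thm-colimit-preserving-restriction} must be read as computed in $\Vect_\K$ and not in $\vect_\K$: as $\cC$ may have infinitely many objects, the colimit over $\cC$ of a pointwise finite-dimensional module need not remain finite-dimensional, so $\vect_\K$ is not the relevant ambient category. Consequently the hypothesis specializes verbatim to the condition ``$\colim MF \cong \colim M$ for all $M : \cC \to \vect_\K$'', and \cref{thm-colimit-preserving-restriction} yields that $F$ is final. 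The initial/limit case is identical, using that $\vect_\K$ is likewise not complete.

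The only point requiring care---and the closest thing to an obstacle here---is precisely this bookkeeping about where the (co)limits live: once \cref{thm-colimit-preserving-restriction} is read with (co)limits taken in the ambient (co)complete category $\Vect_\K$, the corollary follows immediately, with no new combinatorial or categorical content. If one prefers to avoid citing the classical preservation result, the forward implication can instead be made self-contained: every $M : \cC \to \Vect_\K$ is the filtered colimit of its finitely generated subfunctors, each of which is pointwise finite-dimensional because $\cC$ has finite hom-sets, so writing $M = \colim_\alpha M_\alpha$ with $M_\alpha : \cC \to \vect_\K$ and commuting the two colimits reduces the claim to \cref{thm-colimit-preserving-restriction} applied termwise; the dual argument, expressing $M$ as a cofiltered limit, handles the initial case.
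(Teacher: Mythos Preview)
Your main argument is correct and matches the paper's one-line proof: the forward direction cites the classical characterization (the paper's \cref{thm-final-preserves-colimit}), and the converse restricts to $\mod\cC \subseteq \Mod\cC$ and invokes \cref{thm-colimit-preserving-restriction}.

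One caution about the optional ``self-contained'' alternative you sketch at the end. First, it is not genuinely self-contained: applying \cref{thm-colimit-preserving-restriction} termwise to the $M_\alpha$ uses its forward implication, which in the paper is itself deduced from the very classical result you are trying to bypass. Second, and more seriously, the dual version breaks down---an arbitrary $M : \cC \to \Vect_\K$ need not be a cofiltered limit of pointwise finite-dimensional functors (already over a one-object category, an infinite-dimensional space such as $\K^{(\mathbb N)}$ is not such a limit in any way that recovers it). Since your primary argument already suffices, you can simply drop that final paragraph.
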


We are interested in finding the minimal subposet such that the embedding is final or initial. 
We will prove that such a subposet exists, and construct it, under the following assumptions. Here we denote by $\down \cS$ the downset of a subset $\cS$ of $\cP$, which is given by all points that are smaller than or equal to some point in $\cS$; likewise for the upset $\up \cS$. 

\begin{asn}\label{asn-poset-fin}
    A nonempty poset $\cP$ satisfies this assumption if 
    \begin{enumerate}
        \item\label{asn-poset-fin-downset} $\cP = \down \cS$ for some finite subposet $\cS \subseteq \cP$, and
        
        \item\label{asn-poset-fin-maximality} For all subposets $\cA, \cB \subseteq \cP$ such that $\cA \cap \cB \neq \emptyset$ and which are of the form $\down p$ or $\down p \cap \up q$ for some $p,q \in \cP$, we have $\max(\cA\cap\cB) \neq \emptyset$. 
    \end{enumerate}
\end{asn}

\begin{asn}\label{asn-poset-fin-dual}\textbf{(Dual of \cref{asn-poset-fin})}
    A nonempty poset $\cP$ satisfies this assumption if 
    \begin{enumerate}
        \item $\cP = \up \cS$ for some finite subposet $\cS \subseteq \cP$, and
        
        \item For all subposets $\cA, \cB \subseteq \cP$ such that $\cA \cap \cB \neq \emptyset$ and which are of the form $\up p$ or $\up p \cap \down q$ for some $p,q \in \cP$, we have $\min(\cA\cap\cB) \neq \emptyset$. 
    \end{enumerate}
\end{asn}

We illustrate in Figure \ref{fig-exampleposets} some examples where these assumptions are satisfied or not.

\begin{figure}
    \centering
    \begin{subfigure}[t]{0.48\textwidth}
        \begin{center}
        \begin{tikzpicture}
        \fill[fill=blue!60] (0,2) -- (1,2) -- (1,1) -- (2,1) -- (2,0) -- (4,0) -- (4,3) -- (3,3) -- (3,4) -- (0,4) -- (0,2);
        \draw[] (0,2) -- (1,2) -- (1,1) -- (2,1) -- (2,0) -- (4,0) -- (4,3) -- (3,3) -- (3,4) -- (0,4) -- (0,2);
        \fill[fill=black] (0,2) circle (2.5pt) node[left] {$s_1$};
        \fill[fill=black] (1,1) circle (2.5pt) node[left] {$s_2$};
        \fill[fill=black] (2,0) circle (2.5pt) node[left] {$s_3$};
        \fill[fill=black] (4,3) circle (2.5pt) node[right] {$t_2$};
        \fill[fill=black] (3,4) circle (2.5pt) node[right] {$t_1$};
        \end{tikzpicture}
        \end{center}
        \caption{$\cP = \up\{s_1,s_2,s_3\} \cap \down \{t_1, t_2\} \subset \bbR^2$ satisfies \cref{asn-poset-fin} and \cref{asn-poset-fin-dual}.}
        \label{fig-exampleposets-inR2}
    \end{subfigure}
    \hfill
    \begin{subfigure}[t]{0.48\textwidth}
        \begin{center}
        \tdplotsetmaincoords{60}{120} 
        \begin{tikzpicture}[line join=round]
        
          \def\L{3}
        
          \coordinate (O) at (0,0,0);
          \coordinate (A) at (\L,0,0);
          \coordinate (B) at (\L,\L,0);
          \coordinate (C) at (0,\L,0);
          \coordinate (D) at (0,0,\L);
          \coordinate (E) at (\L,0,\L);
          \coordinate (F) at (\L,\L,\L);
          \coordinate (G) at (0,\L,\L);
        
          \fill[blue!60,opacity=0.5] (O) -- (A) -- (B) -- (C) -- cycle; 
          \fill[blue!60,opacity=0.5] (O) -- (A) -- (E) -- (D) -- cycle; 
          \fill[blue!60,opacity=0.5] (A) -- (B) -- (F) -- (E) -- cycle; 
          \fill[blue!60,opacity=0.5] (B) -- (C) -- (G) -- (F) -- cycle; 
          \fill[blue!60,opacity=0.5] (C) -- (O) -- (D) -- (G) -- cycle; 
          \fill[blue!60,opacity=0.5] (D) -- (E) -- (F) -- (G) -- cycle; 
        
          \draw[dashed] (C) -- (O) -- (A);
          \draw[] (C) -- (B) -- (A);
          \draw[] (D) -- (E) -- (F) -- (G) -- cycle;
          \draw[dashed] (O) -- (D);
          \draw[] (A) -- (E);
          \draw[] (B) -- (F);
          \draw[] (C) -- (G);

          \fill[fill=black] (D) circle (2.5pt) node[left] {$s$};
          \fill[fill=black] (B) circle (2.5pt) node[right] {$t$};
        \end{tikzpicture}
        \end{center}
        \caption{$\cP = \up s \cap \down t \subset \bbR^3$ satisfies \cref{asn-poset-fin} and \cref{asn-poset-fin-dual}.}
        \label{fig-exampleposets-cube}
    \end{subfigure}
    \begin{subfigure}[t]{0.48\textwidth}
        \begin{center}
        \begin{tikzpicture}
        \fill[fill=red!50] (0,0) -- (3.5,0) -- (0,3.5) -- (0,0);
        \draw[] (0,0) -- (3.5,0) -- (0,3.5) -- (0,0);
        \fill[fill=black] (0,0) circle (2.5pt) node[left] {$s$};
        \draw[ultra thick] (3.5,0) -- (0,3.5);
        \node[] at (2.85, 1) {$T$};
        \end{tikzpicture}
        \end{center}
        \caption{$\cP = \up s \cap \down T \subset \bbR^2$ doesn't satisfy \cref{asn-poset-fin} (\ref{asn-poset-fin-downset}), since $T$ is not contained in the downset of a finite subset of $\cP$.}
        \label{fig-exampleposets-failDownset}
    \end{subfigure}
    \hfill
    \begin{subfigure}[t]{0.48\textwidth}
        \begin{center}
        \begin{tikzpicture}
        \fill[fill=red!50] (0,0) -- (4,0) -- (4,2) -- (2.5,2) -- (2.5, 3.5) -- (0,3.5) -- (0,0);
        \draw[] (0,0) -- (4,0) -- (4,2) -- (2.5,2) -- (2.5, 3.5) -- (0,3.5) -- (0,0);
        \fill[fill=black] (0,0) circle (2.5pt) node[left] {$s$};
        \fill[fill=black] (2.5,3.5) circle (2.5pt) node[right] {$t_1$};
        \fill[fill=black] (4,2) circle (2.5pt) node[right] {$t_2$};
        \fill[fill=white] (2.5,2) circle (2.5pt);
        \draw[thick] (2.5,2) circle (2.5pt) node[above right] {$r$};
        \end{tikzpicture}
        \end{center}
        \caption{$\cP = \up s \cap \down \{t_1, t_2\}\setminus \{r\} \subset \bbR^2$ doesn't satisfy \cref{asn-poset-fin} (\ref{asn-poset-fin-maximality}), since the subposet $\down t_1 \cap \down t_2$ is non-empty, but $\max (\down t_1 \cap \down t_2) = \emptyset$.}
        \label{fig-exampleposets-failIntersection}
    \end{subfigure}
    \caption{The top row displays posets satisfying \cref{asn-poset-fin} and \cref{asn-poset-fin-dual}. The posets in the bottom row do not satisfy \cref{asn-poset-fin}.
    }
    \label{fig-exampleposets}
\end{figure}

Define recursively the full subposets $\cS_0 \defeq \max\cP$ and
\begin{equation*}
\cS_n \defeq \cS_{n-1} \sqcup \max\{p \in \cP : \up p \cap \cS_{n-1} \text{ is not connected}\}
\end{equation*}
for $n \geq 1$. 
We then consider $\sinf = \bigcup_{n=0}^\infty S_n$. 

\begin{letterthm}\label{thm-minimal-final-subposet}\textbf{\emph{(Minimal Final Subposet).}} 
Let $\sinf$ be the poset constructed above and suppose $\cP$ satisfies \cref{asn-poset-fin}. 
\begin{enumerate}
    \item The full embedding $F : \sinf \hookrightarrow \cP$ is final. 
    \item If $G : \cQ \hookrightarrow \cP$ is a final poset embedding, then $\Obj\sinf \subseteq \Obj\cQ$. 
    \item There is an $\ell \leq \#\max \cP - 1$ such that $\cS_\ell = \cS_{\ell+1} = \cdots = \cS_\infty$. Furthermore, this bound is sharp.
\end{enumerate}
\end{letterthm}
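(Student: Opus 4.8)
The plan is to reformulate (in)finality as a connectivity condition on comma categories, record the structural consequences of \cref{asn-poset-fin}, prove the stabilization bound~(3) by a potential argument, deduce~(1), and prove the minimality~(2) by a separate induction; concretely I would do~(3) first, then~(1), then~(2).

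\textbf{Reformulation and structure.} By \cref{thm-colimit-preserving-restriction} together with the classical description of final functors, a full embedding $\cS\hookrightarrow\cP$ of posets is final exactly when $\up p\cap\cS$ is nonempty and fence-connected for every $p\in\cP$; so, writing $U_n\defeq\{p\in\cP:\up p\cap\cS_n\text{ is disconnected}\}$, the recursion reads $\cS_{n+1}=\cS_n\sqcup\max U_n$, and $\cS_n\hookrightarrow\cP$ is final iff $U_n=\emptyset$. From \cref{asn-poset-fin} I extract: (i) a maximal element of $\cP$ must lie in the finite set $\cS$ of~(\ref{asn-poset-fin-downset}), so $\max\cP$ is finite, say $M\defeq\#\max\cP$; (ii) every $\up q$ has a maximal element, and each such element lies in $\max\cP$, so $\up q\cap\max\cP\ne\emptyset$, and since $\max\cP=\cS_0\subseteq\cS_n$ all the comma categories above are nonempty; (iii) using~(ii) and~(\ref{asn-poset-fin-maximality}), every subset of $\cP$ built from the $\down m$ ($m\in\max\cP$) and the $\up q$ by finitely many intersections and unions is \emph{upward-Noetherian} (empty, or every element lies below one of its maximal elements) --- intersections follow by repeated use of~(\ref{asn-poset-fin-maximality}), and a finite union of upward-Noetherian sets is upward-Noetherian because a finite union of antichains has bounded chains, hence has maximal elements, which one reaches by chasing upward. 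Two consequences: each connected component of $\up p\cap\cS_n$ contains a point of $\max\cP$ (its ``roof''), distinct components contain distinct ones, so the component count $\beta_n(p)$ satisfies $1\le\beta_n(p)\le M$; and, setting $W_n^{ab}\defeq\{x\in\down a\cap\down b: a,b\text{ lie in different components of }\up x\cap\cS_n\}$ for distinct $a,b\in\max\cP$, one has $U_n=\bigcup_{\{a,b\}}W_n^{ab}$, a finite union of up-sets inside the upward-Noetherian sets $\down a\cap\down b$, hence $U_n$ is upward-Noetherian; in particular $\max U_n=\emptyset\iff U_n=\emptyset$.

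\textbf{Proof of~(3).} First, $\beta_n(p)$ is non-increasing in $n$: a point of $\cS_{n+1}\setminus\cS_n$ lying in $\up p$ lies below some $m\in\max\cP\cap\up p\subseteq\cS_n$, so adjoining it to $\up p\cap\cS_n$ cannot create a new component. The crux is the \emph{drop lemma}: if $U_{n-1}\ne\emptyset$ then $\max_p\beta_n(p)<\max_p\beta_{n-1}(p)$. Granting it, $\max_p\beta_0(p)\le M$ is a positive integer strictly decreasing at each non-trivial step, so after at most $M-1$ steps it equals $1$, i.e.\ $U_\ell=\emptyset$ for some $\ell\le M-1$; then $\cS_{\ell+1}=\cS_\ell$, and since the recursion depends only on the previous term, $\cS_\ell=\cS_{\ell+1}=\cdots=\sinf$. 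To prove the lemma, take $p$ with $\beta_{n-1}(p)=B\defeq\max_q\beta_{n-1}(q)\ge2$ and suppose $\beta_n(p)=B$; let $C_1,\dots,C_B$ be the components of $\up p\cap\cS_{n-1}$, with roofs $a_i\in C_i\cap\max\cP$. Since $\max U_{n-1}$ is an antichain, adjoining it fails to merge any two $C_i$ only if no $z\in\up p\cap\max U_{n-1}$ is comparable to points of two distinct $C_i$; in particular no such $z$ has $p\le z\le a_1$ and $z\le a_2$, for such a $z$ can have no $\cS_{n-1}$-point between $p$ and $z$ (that point would already link $a_1$ and $a_2$), hence would merge $C_1$ with $C_2$. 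But $Z\defeq\up p\cap\down a_1\cap\down a_2$ is nonempty, upward-Noetherian, and contained in $U_{n-1}$, so it has a maximal element; one then promotes this element to an element of $\max U_{n-1}$ still straddling two distinct $C_i$, contradicting the previous sentence. For sharpness I would exhibit, for each $M$, a poset with $M$ maximal elements for which $\ell=M-1$: a ``caterpillar'' (realizable as a subposet of $\bbZ^2$, or as an explicit Hasse diagram) whose $n$-th layer $\cS_n\setminus\cS_{n-1}=\{p_n\}$ is a single point below $p_{n-1}$ and a fresh maximal element, arranged so that $\up{p_n}\cap\cS_{n-1}$ has exactly two components for $1\le n\le M-1$.

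\textbf{Proof of~(1) and~(2).} Part~(1) is now immediate: by~(3), $\sinf=\cS_\ell$ is a full subposet obtained at a finite stage with $U_\ell=\emptyset$, so $\up p\cap\sinf$ is connected for all $p$, and nonempty by~(ii); hence $F$ is final. For~(2), let $G:\cQ\hookrightarrow\cP$ be final; I claim $\Obj\cS_n\subseteq\Obj\cQ$ by induction on $n$. For $n=0$: if $m\in\max\cP$ then finality forces $\up m\cap\cQ\ne\emptyset$, and $\up m\cap\cQ\subseteq\{m\}$, so $m\in\cQ$. For the step, assume $\cS_{n-1}\subseteq\cQ$ and take $p\in\max U_{n-1}$; if $p\notin\cQ$, then $\up p\cap\cQ$ is connected and contains the disconnected set $\up p\cap\cS_{n-1}$, so a fence in $\up p\cap\cQ$ joining two of its components must pass through $\cQ\setminus\cS_{n-1}$; following it from one component to the first vertex $r$ outside $\cS_{n-1}$ gives $r>p$ with $r\in\cQ$, and a short analysis shows $r$ --- or a maximal element of $U_{n-1}$ above it --- lies in $U_{n-1}$, contradicting the maximality of $p$. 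Hence $\Obj\sinf=\Obj\cS_\ell\subseteq\Obj\cQ$.

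\textbf{Main obstacle.} The delicate step is the promotion inside the drop lemma: lifting the ``meet'' $z_0\in\max Z$ to a genuine maximal element of $U_{n-1}$ that is still comparable to maximal elements in two distinct components of $\up p\cap\cS_{n-1}$; controlling this against the possibility of infinite ascending chains in $\cP$ is precisely where the full strength of \cref{asn-poset-fin}~(\ref{asn-poset-fin-maximality}), through the upward-Noetherianness of $U_{n-1}$, must be used. The analogous point in~(2) --- certifying that the fence-chasing vertex $r$ (or some element above it) really lies in $U_{n-1}$ --- requires the same care.
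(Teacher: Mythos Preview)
Your overall strategy is sensible, and your structural observations --- in particular, that $W_n^{ab}$ is an up-set in $\down a\cap\down b$ and hence $U_n$ is upward-Noetherian --- are correct and useful. However, the \emph{drop lemma} is false, so your proof of~(3) cannot be completed along the lines you propose.

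Here is a counterexample. Take $\max\cP=\{a,b,c,d,e\}$ and, below these, $q_{ab}<a,b$, $q_{bc}<b,c$, $q_{de}<d,e$; then $z<q_{ab}$ and $z<c$ (so $z<a,b,c$); finally $p<z$, $p<q_{bc}$, $p<q_{de}$ (so $p$ lies below all five maxima, but $p\not<q_{ab}$ directly except via~$z$, and $z\not<q_{bc}$). This is a finite poset, so \cref{asn-poset-fin} holds. One computes $\max U_0=\{q_{ab},q_{bc},q_{de}\}$, hence $\cS_1=\cS_0\cup\{q_{ab},q_{bc},q_{de}\}$; then $\up p\cap\cS_1$ has the two components $\{a,b,c,q_{ab},q_{bc}\}$ and $\{d,e,q_{de}\}$, while $\up z\cap\cS_1=\{a,b,c,q_{ab}\}$ also has two components $\{a,b,q_{ab}\}$ and~$\{c\}$. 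Thus $\max_x\beta_1(x)=2$, and $U_1=\{p,z\}$ with $\max U_1=\{z\}$. Now $\cS_2=\cS_1\cup\{z\}$, and since $z$ is comparable only to elements of the first component of $\up p\cap\cS_1$, adjoining it does not merge the two components: $\beta_2(p)=2$. Hence $\max_x\beta_2(x)=2=\max_x\beta_1(x)$ even though $U_1\neq\emptyset$. The promotion you flag as the ``main obstacle'' is exactly where the argument breaks: the element $z\in\max U_1$ above $p$ sits entirely under one component $C_1$ (its disconnection is \emph{internal} to~$C_1$), and there is no way to promote $z_0\in\max(\up p\cap\down a\cap\down d)$ to an element of $\max U_1$ that still straddles $C_1$ and~$C_2$, because the only element of $\max U_1$ is~$z$.

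The paper's proof of~(3) avoids any potential-function argument. Instead it shows directly that if $s_n\in\cS_n\setminus\cS_{n-1}$ then one can extract $s_{n-1}\in\cS_{n-1}\setminus\cS_{n-2}$ with $s_n<s_{n-1}$, producing a strictly increasing chain $s_n<s_{n-1}<\cdots<s_1$ together with witnesses $t_i$ whose roofs in $\max\cP$ are forced to be pairwise distinct; pigeonhole then gives $n\le M-1$. Your sketch of~(2) is also not quite right as written (the first fence vertex $r\notin\cS_{n-1}$ satisfies $r>p$ with $p\in\max U_{n-1}$, so $r\notin U_{n-1}$, the opposite of what you assert), but the correct argument --- using that each $\up q_j\cap\cS_{n-1}$ is connected for every fence vertex $q_j>p$ and that consecutive such sets are nested --- is close by and recoverable.
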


\begin{rmk}\label{rmk-dual-minimal-final}
If we replace \cref{asn-poset-fin} by \cref{asn-poset-fin-dual}, $\max$ by $\min$, and $\up p$ by $\down p$, we obtain the dual of \cref{thm-minimal-final-subposet}, which gives the minimal subposet such that its embedding is initial. 
\end{rmk}

\begin{rmk}
    Parts (1) and (2) of \cref{thm-minimal-final-subposet} were first obtained by Dey and Lesnick in the case where $\cP$ has finite upsets \cite{mikestalk-october}.  
    In this case, they construct $\cS_\infty$ as the \textit{final hull} of $\cP$: the full subposet $\{p \in \cP : \up p \setminus \{p\} \text{ is not connected}\}$. 
    As noted in \cref{subsec-related-work}, this result was recently refined to construct a nonfull subposet that also minimizes the number of relations \cite[Thm.~3.5]{deylesnick2026}). 
\end{rmk}

\begin{rmk}
Every finite poset satisfies \cref{asn-poset-fin} and \cref{asn-poset-fin-dual}. 
More generally, any poset with finite upsets (dually, downsets) satisfies \cref{asn-poset-fin} (dually, \cref{asn-poset-fin-dual}). 
Also, any finitely presentable subposet (in the sense of \cite[Section 2.6]{BDHS25}) of $\bbR^n$ (see \cite[Lemma 2.1]{bauerscoccola25}) satisfy \cref{asn-poset-fin-dual}. 
In particular, any finite interval in $\mathbb{Z}^2$ satisfies these assumptions, allowing us to recover the zigzag poset of \cite{deykimmemoli2023}. 
We illustrate a specific case of their result in \cref{ex-explicitcomputation}.
\end{rmk}

\begin{rmk}
Even when $\cP$ is a continuous poset, the subposet $\sinf$ can be finite. This occurs, for example, in \cref{fig-exampleposets-inR2} and \cref{fig-exampleposets-cube}.
\end{rmk}

\begin{rmk}\label{rmk-drop-assumption}
If we drop any of the parts of \cref{asn-poset-fin} or \cref{asn-poset-fin-dual}, it is easy to find an example of a poset that does not admit a minimal final or initial subposet.
For instance, this is the case for \cref{fig-exampleposets-failDownset} and \cref{fig-exampleposets-failIntersection} due to a similar reason as the one given in \cref{ex-no-min-fin-sub}.
\end{rmk}

Under \cref{asn-poset-fin} or \cref{asn-poset-fin-dual}, we can show a partial converse of \cref{prp-multiplicity-preserving-restriction}: 
\begin{letterprp}\label{prp-converseB}
Suppose that $F:\cS \to \cP$ is a full poset morphism between connected posets and that $\cP$ satisfies \cref{asn-poset-fin} (resp. \cref{asn-poset-fin-dual}). If $\mult{\intMod{\cP}}{M} = \mult{\intMod{\cS}}{MF}$ for all $M \in \mod\cP$, then $F$ is final (resp. initial). 
\end{letterprp}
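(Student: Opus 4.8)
\smallskip
\noindent\textit{Proof strategy.}\quad The plan is to argue by contraposition: assuming $F$ is not final, I will produce $M\in\mod\cP$ with $\mult(M,\intMod\cP)\neq\mult(MF,\intMod\cS)$; the initial statement then follows by applying the final one to $\cP^{\mathrm{op}}$ together with \cref{asn-poset-fin-dual}. Since $F$ is a full embedding of posets, $F$ is final exactly when every comma category $p\downarrow F$ is nonempty and connected, and $p\downarrow F$ is canonically the subposet $\up p\cap\cS$. So non-finality provides a point $p_0$ with $\up p_0\cap\cS$ either empty or disconnected, and I would split into these two cases; in both I will arrange $\mult(M,\intMod\cP)=0<1=\mult(MF,\intMod\cS)$.

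\emph{Empty case.} If some $q$ has $\up q\cap\cS=\emptyset$, take $M\defeq\intMod{\cP\setminus\up q}$, the interval module on the downset $\cP\setminus\up q$ (convex, hence a well-defined pointwise one-dimensional $\cP$-module). Then $\cS\subseteq\cP\setminus\up q$ and connectedness of $\cS$ give $MF=\intMod\cS$, while $M$ is the direct sum of the interval modules on the connected components of $\cP\setminus\up q$, none of which is $\intMod\cP$ since $q$ is missing; hence $\mult(M,\intMod\cP)=0$.

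\emph{Disconnected case} (so $\up q\cap\cS\neq\emptyset$ for all $q$, but $\up p_0\cap\cS$ is disconnected for some $p_0$). Here I would take $M\defeq\mathrm{Ran}_F(\intMod\cS)$, the right Kan extension of $\intMod\cS$ along $F$, so that $(-)F\dashv\mathrm{Ran}_F$; pointwise $M(p)$ is the limit of the constant diagram $\K$ over $p\downarrow F\cong\up p\cap\cS$, that is $M(p)=\K^{c(p)}$ with $c(p)=\#\pi_0(\up p\cap\cS)\geq 1$. Three facts are needed. (i) \emph{$M$ is pointwise finite-dimensional.} This is where \cref{asn-poset-fin} enters: it forces $\max\cP$ to be finite with every point of $\cP$ below a maximal point; since no $\up q\cap\cS$ is empty we get $\max\cP\subseteq\cS$, so every element of $\up p\cap\cS$ lies below an element of $\max\cP\cap\up p$, each connected component of $\up p\cap\cS$ contains such an element, and therefore $c(p)\leq\#\max\cP<\infty$. (ii) \emph{$MF\cong\intMod\cS$}, because the counit of the adjunction is an isomorphism for the fully faithful $F$; hence $\mult(MF,\intMod\cS)=1$. (iii) \emph{$\intMod\cP$ is not a direct summand of $M$.} The unit of the adjunction is a map $\eta\colon\intMod\cP\to\mathrm{Ran}_F((\intMod\cP)F)=M$, which pointwise is the diagonal $\K\hookrightarrow\K^{c(p)}$, hence a monomorphism; set $C\defeq\coker\eta$. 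Since $c(p_0)\geq 2$ we have $C\neq 0$, and since $\eta F$ is an isomorphism (triangle identity plus (ii)) we have $CF=0$. Now $\Hom_\cP(\intMod\cP,M)\cong\Hom_\cS((\intMod\cP)F,\intMod\cS)=\Hom_\cS(\intMod\cS,\intMod\cS)=\K$ is spanned by $\eta$; so if $\intMod\cP$ were a summand its inclusion would be a nonzero multiple of $\eta$, forcing $0\to\intMod\cP\xrightarrow{\eta}M\to C\to 0$ to split, whence there would be a nonzero section $C\to M$ — contradicting $\Hom_\cP(C,M)\cong\Hom_\cS(CF,\intMod\cS)=\Hom_\cS(0,\intMod\cS)=0$. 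Hence $\mult(M,\intMod\cP)=0$.

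The step I expect to be the real obstacle is (i): without \cref{asn-poset-fin} the module $\mathrm{Ran}_F(\intMod\cS)$ need not be pointwise finite-dimensional — the poset $\up p\cap\cS$ can have infinitely many components — so it would not be an admissible test module, and separating off the empty-comma-category case \emph{before} invoking the assumption is exactly what repairs this. Everything else is bookkeeping with the Kan-extension adjunction; in particular the a priori delicate claim that $\intMod\cP$ is not a summand of $\mathrm{Ran}_F(\intMod\cS)$ comes essentially for free, since $\Hom_\cP\bigl(-,\mathrm{Ran}_F(\intMod\cS)\bigr)$ only depends on the restriction to $\cS$, where $\mathrm{Ran}_F(\intMod\cS)$ collapses to $\intMod\cS$.
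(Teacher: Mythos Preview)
Your argument is correct and genuinely different from the paper's. Both proofs proceed by contraposition with the same case split, and the empty case is handled identically. For the disconnected case, however, the paper constructs an explicit test module by hand: via two technical lemmas (\cref{lem-wlog} and \cref{lem:connectedComp}) it locates a point $x$ and two maximal elements $z,z'$ lying in distinct connected components of $\up x\setminus\{x\}$, then defines $M$ to be $\K^2$ at $x$ and $\K$ elsewhere with carefully chosen structure maps, and finally computes $\colim M=0$ to conclude $\rk M=0$ via \cref{thm:generalized-rank-general}. Your route is more conceptual: the single module $M=\mathrm{Ran}_F(\intMod\cS)$ does all the work, and the adjunction $(-)F\dashv\mathrm{Ran}_F$ lets you read off both $MF\cong\intMod\cS$ and $\Hom_\cP(-,M)\cong\Hom_\cS((-)F,\intMod\cS)$, from which the non-splitting of $\eta$ follows immediately. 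This avoids any delicate poset geometry and any appeal to the generalized rank. A further dividend is that your bound $c(p)\le\#\max\cP$ uses only part~(\ref{asn-poset-fin-downset}) of \cref{asn-poset-fin}; part~(\ref{asn-poset-fin-maximality}) is never invoked, so you have actually proved \cref{prp-converseB} under a strictly weaker hypothesis than stated. The paper's proof, by contrast, needs part~(\ref{asn-poset-fin-maximality}) inside \cref{lem-wlog} to guarantee the existence of the special point $x\in\max(\down z\cap\down z')$. What the paper's approach buys is an explicit, small counterexample module (pointwise dimension at most $2$), whereas your Kan extension can have dimension up to $\#\max\cP$ at each point.
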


Note that the above proposition is false if we do not assume that the poset morphism is full. 
We give a counter-example in \cref{ex-counter-ex-converse-B}.

\begin{rmk}
\cref{prp-converseB} holds when $F : \sinf \hookrightarrow \cP$ is the embedding of the minimal final (or initial) subposet constructed in \cref{thm-minimal-final-subposet}. 
\end{rmk}

We summarize how to use our results to compute the multiplicity of the entire interval module in:

\begin{lettercon}\label{construction-M-rk}
Let $\cP$ be a poset satisfying \cref{asn-poset-fin} and \cref{asn-poset-fin-dual}.
Using \cref{thm-minimal-final-subposet} and \cref{rmk-dual-minimal-final}, we construct a poset embedding $F:\cM{\rk} \hookrightarrow \cP$ such that
\begin{equation*}
\mult{\intMod{\cP}}{M} = \mult{\intMod{\cM{\rk}}}{MF}
\end{equation*}
for all $M \in \mod\cP$. 
Moreover, $\cM{\rk}$ is such that if $G : \cQ \hookrightarrow \cP$ is a final and initial poset embedding, then $\Obj\cM{\rk} \subseteq \Obj\cQ$. 

Our strategy is to apply the following steps:
\begin{enumerate}
    \item\label{step1} Find the minimal subposet $\sinf^{fin}$ such that its embedding is final (see \cref{thm-minimal-final-subposet}). 
    
    \item\label{step2} Find the minimal subposet $\sinf^{init}$ such that its embedding is initial (see \cref{rmk-dual-minimal-final}). 
    
    \item\label{step3} Consider the union $\cU \defeq \sinf^{fin} \cup \sinf^{init}$. 
    If $F:\cQ \hookrightarrow \cP$ is a final and initial poset embedding, then by \cref{thm-minimal-final-subposet} (2) and its dual, we have $\Obj\cU \subseteq \Obj\cQ$.
    
    \item\label{step4} To use \cref{thm:generalized-rank-general}, one needs to have a connected poset. 
    So, if $\cU$ is disconnected, we add an order relation (from $\cP$) between an element of $\sinf^{init}$ and one of $\sinf^{fin}$ to obtain $\cM{\rk}$. 
    In other words, $\cM{\rk}$ is obtained from $\cU$ by turning it into a connected (but not necessarily full) subposet of $\cP$. 
    \item\label{step5} At this point, one has a connected subposet $\cM{\rk}$ such that the poset inclusion functor $F : \cM{\rk} \hookrightarrow \cP$ preserves the generalized rank, or equivalently, by \cref{thm:generalized-rank-general}, the multiplicity of the entire interval module. 
    We prove this in \cref{prp-constructionWorks}. 
\end{enumerate}
\end{lettercon}

One could change Part \ref{step4} slightly by allowing one to add more than a single relation. 
This would not change the validity of \cref{construction-M-rk}. 
If one adds all possible relations, then we get the following.

\begin{rmk}\label{rmk-fullSubposetRank}
    Let $\cM{full}$ be the full subcategory of $\cP$ generated by the objects in $\sinf^{init} \cup \sinf^{fin}$. 
    By \cref{prp-converseB} and \cref{prp-constructionWorks}, $\cM{full}$ is final, initial, and minimal in the sense that any final and initial subposet $\cQ \subseteq \cP$ satisfies $\Obj\cM{full} \subseteq \Obj\cQ$. 
\end{rmk}

In general, the conclusion of \cref{rmk-fullSubposetRank} is false. 

\begin{ex}
    Take $\cP$ to be the interval in \cref{fig-exampleposets-inR2}. Then 
    \begin{equation*}
    \sinf^{init} = \{s_1, s_2, s_3, s_1 \vee s_2, s_2 \vee s_3 \}\, \text{ and } \,\sinf^{fin} = \{t_1, t_2, t_1 \wedge t_2 \},
    \end{equation*}
    where we denote the join by $\vee$ and the meet by $\wedge$. 
    Following \cref{construction-M-rk}, construct $\cM{\rk}$ by adding the morphism $s_2 \leq t_2$ to $\sinf^{init} \cup \sinf^{fin}$. 
    By \cref{prp-constructionWorks}, the embedding $F: \cM{rk} \to \cP$ preserves the generalized rank.
    However, $F^{-1}(\up s_1) = \{s_1, s_1 \vee s_2 \} \cup \sinf^{fin}$ is not connected in $\cM{\rk}$. 
    Then $F$ is not final, by \cref{lem-comma-cat-poset}. 
\end{ex}

We conclude this subsection with an explicit example using our aforementioned results. 

\begin{ex}\label{ex-explicitcomputation}
We apply \cref{construction-M-rk} to compute $\mult{\intMod{\cP}}{M}$, where $\K = \bbR$ and $M$ is the following  
$\cP$-module:
\begin{equation*}
M = 
\begin{tikzcd}[ampersand replacement=\&, sep=4.2em] 
\color{cyan}{\bbR^2} \arrow[r, cyan,  "{\lbracket\begin{smallmatrix}1 & 0\\1 & 1\end{smallmatrix}\rbracket}" cyan] \& \color{cyan}{\bbR^2} \arrow[r, cyan,  "{\lbracket\begin{smallmatrix} 1 & -1\\0 & 1\end{smallmatrix}\rbracket}" cyan] \& {\color{cyan}\bbR^2} \arrow[r, cyan,   "{\lbracket\begin{smallmatrix}-1 & 1\\0 & 0\\1 & 0\end{smallmatrix}\rbracket}" cyan] \& {\color{cyan}\bbR^3} \arrow[r, cyan,   "{\lbracket\begin{smallmatrix}1 & 0 & 0\\0 & -1 & 0\end{smallmatrix}\rbracket}" cyan] \& {\color{red}\bbR^2}\\ 
{\color{red}\bbR^3} \arrow[r, red,   "{\lbracket\begin{smallmatrix}1 & 0 & 0\\0 & 1 & 0\\0 & 0 & 1\end{smallmatrix}\rbracket}" red] \arrow[u, cyan,  "{\lbracket\begin{smallmatrix}1 & 1 & 0\\-1 & 0 & -1\end{smallmatrix}\rbracket}" cyan] \& {\color{red}\bbR^3} \arrow[r, blue,   "{\lbracket\begin{smallmatrix}1 & 0 & 1\\0 & 1 & -1\end{smallmatrix}\rbracket}" blue] \arrow[u,   "{\lbracket\begin{smallmatrix}1 & 1 & 0\\0 & 1 & -1\end{smallmatrix}\rbracket}"] \& {\color{blue}\bbR^2} \arrow[r,  blue, "{\lbracket\begin{smallmatrix}-1 & 1\\0 & 1\end{smallmatrix}\rbracket}" blue] \arrow[u,   "{\lbracket\begin{smallmatrix}1 & 0\\0 & 1\end{smallmatrix}\rbracket}"] \& \color{blue}{\bbR^2} \arrow[r, blue, "{\lbracket\begin{smallmatrix}2 & 1\\0 & 1\\1 & 1\end{smallmatrix}\rbracket}" blue] \arrow[u,   "{\lbracket\begin{smallmatrix}1 & 0\\0 & 0\\-1 & 1\end{smallmatrix}\rbracket}"] \& \color{blue}{\bbR^3} \arrow[u, blue, "{\lbracket\begin{smallmatrix}1 & 0 & -1\\1 & 1 & -2\end{smallmatrix}\rbracket}" blue]\\ 
\& {\color{red}\bbR^2} \arrow[r,  "{\lbracket\begin{smallmatrix}1 & -1\\1 & 1\end{smallmatrix}\rbracket}"] \arrow[u, red,   "{\lbracket\begin{smallmatrix}1 & -1\\0 & 2\\1 & 1\end{smallmatrix}\rbracket}" red] \& \bbR^2 \arrow[r,   "{\lbracket\begin{smallmatrix}1 & 0\\0 & 1\\-1 & 1\end{smallmatrix}\rbracket}"] \arrow[u,   "{\lbracket\begin{smallmatrix}1 & 1\\-1 & 0\end{smallmatrix}\rbracket}"] \& \bbR^3 \arrow[r,   "{\lbracket\begin{smallmatrix}0 & -1 & 0\\-1 & 0 & 0\end{smallmatrix}\rbracket}"] \arrow[u,   "{\lbracket\begin{smallmatrix}-2 & -1 & 0\\-1 & 0 & 0\end{smallmatrix}\rbracket}"] \& \bbR^2 \arrow[u,   "{\lbracket\begin{smallmatrix}2 & 5\\0 & 1\\1 & 3\end{smallmatrix}\rbracket}"] 
\end{tikzcd}
\end{equation*}

We follow the steps outlined in \cref{construction-M-rk} to compute $\mult{\intMod{\cP}}{M}$. 
For Step (\ref{step1}), we see that $\sinf^{fin}$ is the unique maximal element of $\cP$. 
For Step (\ref{step2}), we see that $\sinf^{init}$ is the full subposet with elements $\min\cP \sqcup \min\{p \in \cP : \down p \cap \min \cP \text{ is not connected}\}$, so $\sinf^{init}$ contains the two minima and their join in $\cP$. 
For Step (\ref{step3}), write $\cU \defeq \sinf^{fin} \sqcup \sinf^{init}$ (in \textcolor{red}{red}) for the minimal subposet such that its embedding is both final and initial. 

There is no unique choice of relation to add at Step (\ref{step4}). 
For instance, one could connect $\cU$ via (the concatenation of) the \textcolor{blue}{blue} path to obtain $\cM{b} = \cM{\rk}$ or via (the concatenation of) the \textcolor{cyan}{cyan} path to obtain $\cM{c} = \cM{\rk}$. 
Let $F_b : \cM{b} \hookrightarrow \cP$ and $F_c : \cM{c} \hookrightarrow \cP$ be the corresponding embedding functors. 
We get the $\cM{b}$-module and the $\cM{c}$-module
\begin{equation*}
MF_b = 
\begin{tikzcd}[ampersand replacement=\&, sep=2.9em]
\& \& \color{red}{\bbR^2}\\
\color{red}{\bbR^3} \arrow[r, red, "\id" red] \& \color{red}{\bbR^3} \arrow[ur, blue, "{\lbracket\begin{smallmatrix}-1 & 1 & -2\\0 & 0 & 0\end{smallmatrix}\rbracket}" blue] \&\\
\& \color{red}{\bbR^2} \arrow[u, red,  "{\lbracket\begin{smallmatrix}1 & -1\\0 & 2\\1 & 1\end{smallmatrix}\rbracket}" red]
\end{tikzcd}
\text{ and }
MF_c = 
\begin{tikzcd}[ampersand replacement=\&, sep=2.9em]
\& \& \color{red}{\bbR^2}\\
\color{red}{\bbR^3} \arrow[urr, cyan, "{\lbracket\begin{smallmatrix}-1 & 1 & -2 \\ 0 & 0 & 0 \end{smallmatrix}\rbracket}" cyan] \arrow[r, red, "\id" red] \& \color{red}{\bbR^3}\\
\& \color{red}{\bbR^2} \arrow[u, red,  "{\lbracket\begin{smallmatrix}1 & -1\\0 & 2\\1 & 1\end{smallmatrix}\rbracket}" red]
\end{tikzcd}
\end{equation*}
depending on the path chosen. 
In the language of quivers, one gets in the first case a representation of type $\mathbb{D}_4$ and in the latter, of type $\mathbb{A}_4$, like obtained in \cite{deykimmemoli2023}. 
Their representation theory is extremely well understood \cite{Gab72, ASS06}. 
In particular, $\cM{\rk}$ is of finite representation type (i.e., there are only finitely many isomorphism classes of indecomposable modules), as opposed to $\cP$, which is of infinite type \cite{EscolarHiraoka2015}. 
At Step (\ref{step5}), one can answer the original question. 
We know from \cref{prp-multiplicity-preserving-restriction} that 
\begin{equation*}
\mult{\intMod{\cP}}{M} = \mult{\intMod{\cM{b}}}{MF_b} = \mult{\intMod{\cM{c}}}{MF_c}
\end{equation*}
and we can see that $\mult{\intMod{\cM{b}}}{MF_b} = \mult{\intMod{\cM{c}}}{MF_c} = 1$. 
We can also get this from an explicit computation. 
By \cref{thm:generalized-rank-general}, we know that $\mult{\intMod{\cM{b}}}{MF_b} = \rk MF_b$. 
We can compute that 
\begin{equation*}
\rk MF_b = \rk\left(\lbracket\begin{smallmatrix}-1 & 1 & -2\\0 & 0 & 0\end{smallmatrix}\rbracket\lbracket\begin{smallmatrix}1 & -1\\0 & 2\\1 & 1\end{smallmatrix}\rbracket\right) = 1
\end{equation*}
which implies that $\mult{\intMod{\cM{b}}}{MF_b} = 1 = \mult{\intMod{\cP}}{M}$. 
\end{ex}

\subsection{Related Work}\label{subsec-related-work}

Efficient computation of the generalized rank invariant has received increased attention in recent years.  
For a persistence module $M$ indexed by a finite poset, \cite{deyxin2025} develops a \textit{folding/unfolding construction} of a zigzag persistence module with the same generalized rank as $M$.  
The generalized rank of a zigzag module is equivalent to its barcode, and the techniques of \cite{deykimmemoli2023} and \cite{deyxin2025} enable the use of fast barcode algorithms for zigzag persistence \cite{DeyHou22,DeyHouMorozov25}.

For a poset $\cP$ with finite upsets, the minimal final full subposet was first constructed by Dey and Lesnick \cite{mikestalk-october}.  
Our \cref{thm-minimal-final-subposet} generalizes this result to other posets. 
In a recent talk \cite{mikestalk}, Dey and Lesnick proposed a stronger definition of \textit{minimal}, which accounts for the number of relations in the subposet.  
This leads to a smaller final subposet that is not unique, yet still satisfies a nonfull version of parts (1) and (2) of \cref{thm-minimal-final-subposet} when $\cP$ has finite upsets.  
They also provide algorithms and complexity bounds for computing such subposets \cite{deylesnick2026}.

Several other generalizations of the rank invariant and multiplicity have appeared in the literature. \cite{kimmemoli2021} considered the \textit{interval-generalized rank invariant}, computing the $\rk M|_\cI$ along each interval $\cI$ of a poset $\cP$. Since $\cP$ is an interval in itself, the \textit{interval-generalized rank invariant} distinguishes more modules. They also show that the interval rank invariant is stable in the \textit{erosion distance}. 

The interval-generalized rank invariant is realized as the \textit{total compressed multiplicity} in \cite{asashibaetal2023}. 
The \textit{compressed multiplicity} computes the multiplicity \( \mult{\K_\cQ}{M F_\cQ} \) of the restriction along a (not necessarily injective) poset morphism $F_\cQ:\cQ\to \cP$. Several different compressed multiplicities have been studied by \cite{asashibaetal2023, asashibagauthierliu2024, asashibaliu2025}. 
Most recently, \cite{asashibaliu2025} provided a formula for computing the interval multiplicities of a persistence module. They show that the multiplicity of an interval summand can be computed by restricting to another poset which \textit{essentially covers} $\cP$.

\subsection{Organization of the Paper}

In \cref{sec-background}, we recall background on categories (\cref{subsec-categories}), posets (\cref{subset-posets}), $\cC$-modules (\cref{subsec-persistence-modules}), limits and colimits (\cref{section::lim-colim}), and final and initial functors (\cref{subsec-finial-initial}). 
In \cref{subsec-generalized-rank}, we prove a generalization of \cref{thm-rk-decompose}, and obtain \cref{prp-multiplicity-preserving-restriction} as a consequence. 
The proof of \cref{thm-colimit-preserving-restriction} and \cref{cor-colimit-perserving-restriction-Mod} are in \cref{sec-proofThmA}. Finally, the proof of \cref{thm-minimal-final-subposet} is in \cref{sec-proofThmB}. 
Our main results are stated in \cref{sec-main-results}. 

\subsection{Acknowledgements}
The authors would like to thank Laurianne Baril, Luis Scoccola, and Greg Stevenson for useful discussions. 
We also thank Tamal K. Dey and Michael Lesnick for pointing out a problem in an earlier version of our \cref{construction-M-rk}. 
We are grateful to the anonymous referee for careful reading and valuable suggestions.
TB was supported by Bishop’s University, Universit\'e de Sherbrooke and
NSERC Discovery Grant RGPIN-2025-05047.
JD was supported by Universit\'e de Sherbrooke and the Bourse d'Excellence d'Institut des Sciences Math\'ematiques. 
Most of this work was done while SL was at the Universit\'e de Sherbrooke. 
Some of our results were presented by SL at the AARMS-CMS Student Poster Session of the 2025 CMS Summer Meeting. 

\section{Background}\label{sec-background}

\subsection{Categories}\label{subsec-categories}

Let $\cC$ be a category. 
We write $\Obj\cC$ for the class of objects of $\cC$ and for any $c, c' \in \Obj\cC$, we let $\Hom_\cC(c,c')$ denote the class of morphisms from $c$ to $c'$. 
To abbreviate, we write $c \in \cC$ to mean $c \in \Obj\cC$ when it is clear from the context. 
We denote by $\cC^{op}$ the \emph{opposite category} of $\cC$. 
We say that $\cC$ is \emph{small} if $\Obj\cC$ and $\Hom_\cC(c,c')$ are sets for any $c,c' \in \cC$. 
The category $\cC$ is \emph{connected} if $\Obj\cC \neq \emptyset$ and there is a finite zigzag of morphisms $c \rightarrow d_1 \leftarrow d_2 \rightarrow \cdots \leftarrow d_k \rightarrow c'$ between any two objects $c,c' \in \cC$. 
Adopting notation used for posets or directed graphs, we say a subcategory $\cS$ of $\cC$ is \emph{convex} if for all morphisms $f : s \to t$ in $\cS$ that factor as $f = hg$ for some $g : s \to c$ and $h: c \to t$ in $\cC$, one has that $g$ and $h$ (and $c$) are in $\cS$. 
A connected and convex category is said to be an \emph{interval}\footnote{To avoid confusion with the notion of an interval in the combinatorial poset literature, what we call interval here is also referred to as a \emph{spread}, see \cite{BBH24}.}. 
Note that any category is convex as a subcategory of itself and therefore, any connected category is an interval. 

\subsection{Posets}\label{subset-posets}

Let $\cP = (\cP, \leq)$ be a poset which we regard as a category with $\Obj\cP = \cP$ and where $\Hom_\cP(c,c')$ has one element precisely when $c \le c'$, and is empty otherwise. 
This poset is \emph{connected}, \emph{convex}, or an \emph{interval} if it is respectively connected, convex, or an interval as a category. 
A subposet is \emph{full} if it's a full subcategory. 
A functor $F : \cQ \to \cP$ between posets is called a \emph{poset morphism} and it's \emph{full} if, for all $q, q'$ in $\cQ$, we have $q \leq q'$ if and only if $F(q) \leq F(q')$. 
For a subposet $\cS\subseteq \cP$, we denote its inclusion, or \emph{embedding}, by $\cS \hookrightarrow \cP$. 
In particular, if $\cS$ is full, then $\cS \hookrightarrow \cP$ is full. 
If $\cS \subseteq \cP$ is a subposet, we call the \emph{upset} of $\cS$, written $\up \cS$, the full subcategory with objects $\Obj\up \cS \defeq \{p \in \cP : \exists s \in \cS \text{ s.t. }s\leq p\}$. 
Similarly, the \emph{downset} of $\cS$, denoted $\down\cS$, has objects $\Obj\down \cS \defeq \{p \in \cP : \exists s \in \cS \text{ s.t. }p\leq s\}$. 
For a singleton, we write $\up p \defeq \up\{p\}$ and $\down p \defeq \down\{p\}$. 

\subsection{$\cC$-Modules}\label{subsec-persistence-modules}
Throughout, let $\K$ be a field. 
Denote by $\Vect_\K$ the category of $\K$-vector spaces and by $\vect_\K$ the full subcategory of finite-dimensional $\K$-vector spaces. 
Given a small category $\cC$, a $\cC$\emph{-module} is a functor $M : \cC \to \Vect_\K$ and a \emph{pointwise finite-dimensional} $\cC$\emph{-module} is a functor $M : \cC \to \vect_\K$. 
We write $\Mod \cC \defeq \funct{\cC}{\Vect_\K}$ and $\mod \cC \defeq \funct{\cC}{\vect_\K}$ for the categories of such functors. 
Note that $\mod \cC$ is a full subcategory of $\Mod\cC$. 
If it is clear from the context, we simply write $\cC$-module instead of pointwise finite-dimensional $\cC$-module. 
When $\cC = \cP$ is a poset, a $\cC$-module is often referred to in the literature as a $\cP$\emph{-persistence module}.

Let $\cI$ be a convex subcategory of $\cC$. 
We denote by $\intMod{\cI} \in \mod\cC$ the functor
\begin{align*}
    \intMod{\cI}(c) = 
    \begin{cases}
        \K & \text{if }c \in \cI, \\
        0 & \text{otherwise},
    \end{cases}
    \;
     \text{ and }
    \;
     \intMod{\cI}(f : c \to c') = 
    \begin{cases}
        \id & \text{if } f:c\to c' \in \cI, \\
        0 & \text{otherwise.}
    \end{cases}
\end{align*}
If $\cI$ is an interval, we call $\intMod{\cI}$ an \emph{interval module}\footnote{Also denoted $V_\cI$ (\cite{asashibaetal2023} or $M_\cI$ (spread module in \cite{BBH24}), or $\mathbb I_\cI$ (identity representation in \cite{kinser2008})} for $\cI$. 
In particular, we call $\intMod{\cC}$ the \emph{entire interval module of $\cC$}. 

\subsection{Multiplicity}

The \emph{direct sum} of $M_1$ and $M_2$ in $\Mod\cC$, denoted $M_1 \oplus M_2$, is the $\cC$-module with $(M_1\oplus M_2)(c) = M_1(c) \oplus M_2(c)$ and $(M_1\oplus M_2)(f:c\to c') = M_1(f:c\to c')\oplus M_2(f : c \to c')$ for all $f : c\to c' \in \cC$. 
If $M \cong M_1 \oplus M_2$ implies that $M_1=0$ or $M_2 = 0$, we say that $M$ is \emph{indecomposable}. 
For instance, every interval module $\intMod{\cI}$ is indecomposable. 
More generally, every module with \emph{local endormorphism ring} is indecomposable (see \cite[Section~4.8]{Pareigis70}). 

Every $N \in \mod \cC$ can be decomposed as a direct sum of indecomposable objects with local endomorphism rings (\cite[Theorem~1.1]{botnancrawley}), and this decomposition is unique up to isomorphism and permutation of the summands (\cite[Section~4.8]{Pareigis70}). 
For any $U, N \in \mod \cC$, we define the \emph{multiplicity} of $U$ in $N$ to be the largest (cardinal) number $\mult{U}{N}$ such that there exists some $N' \in \mod \cC$ with
\[
N \cong U^{\mult{U}{N}} \oplus N'.
\]
The uniqueness of the decomposition of $N$ ensures that the multiplicity is well-defined for $\mod \cC$.

If $M, L \in \Mod \cC$ and $L$ has a local endomorphism ring, then $\mult{L}{M}$ can be defined in the same way. This is a consequence of \cite[Proposition~1]{warfield69} and the Gabriel--Popescu theorem \cite{GabrielPopescu}.

For our purposes, we will only consider $\mult{\intMod{\cC}}{M}$, for which well-definedness in $\Mod \cC$ follows from \cref{thm:generalized-rank-general}.

\subsection{Limits and Colimits}\label{section::lim-colim}

Let $\cC$ be a small category. The category $\Vect_\K$ is both \emph{complete} and \emph{cocomplete}. 
That is, the limit, $\lim M$, and the colimit, $\colim M$, exists for all $M \in\Mod \cC$.
Moreover, from \cite[Chapter III.4]{maclane} we obtain a triple of adjoint functors 
\begin{equation*}
\begin{tikzcd}
\Mod \cC
  \arrow[r, shift left=1ex, "\colim", bend left]
  \arrow[r, shift right=1ex, "\lim", swap, bend right]
  \arrow[r, phantom, "\bot" description, shift left=2.1ex]
  \arrow[r, phantom, "\bot" description, shift right=2.1ex]
&
\Vect_\K
  \arrow[l, "\Delta" description]
\end{tikzcd}
\end{equation*}
 where the \emph{diagonal functor} $\Delta$ sends an object $V$ in $\Vect_\K$ to the \textit{constant functor} $V_\cC: \cC \to \Vect_\K$ via 
\begin{equation*}
    V_\cC(c) = V
    \;
     \text{ and }
    \;
     V_\cC(g : c \to c') = \id_V
\end{equation*}
and sends a linear map $f : V \to W$ to the natural transformation $(f_c)_{c \in \cC}$ with $f_c = f$ for all $c$. 
Note in particular that $\Delta(\K) = \intMod{\cC}$ is the entire interval module. 
Since the limit is the right adjoint to $\Delta$, the functor $\lim$ is a representable functor, and the representing object is given by $\Delta(\K) = \intMod{\cC}$ (for details, see \cite[Eq. (2) and (3) in Chapter III.4]{maclane}):
\begin{equation}
\lim M \cong \Hom_{\Vect_\K}(\K, \lim M) \cong \Hom_{\Mod\cC}(\K_\cC,M)
\label{equation-lim-hom}
\end{equation}
for all $M \in\Mod \cC$, and dually for colimits one obtains
\begin{equation}
(\colim M)^* = \Hom_{\Vect_\K}(\colim M, \K) \cong \Hom_{\Mod\cC}(M,\K_\cC).
\label{equation-colim-hom}
\end{equation}

In particular, we have the following example.
\begin{ex}\label{ex-computation-colim}
Let $\cL : \setcat \to \Vect_\K$ be the free functor, i.e., the left adjoint to the forgetful functor $\Vect_\K \to \setcat$. 
Let $c $ be an object in a small category $\cC$ and put $M \defeq \cL \circ \Hom_\cC(c,-)$. 
Then by \cref{equation-colim-hom} and the Yoneda Lemma, we get 
\begin{equation}\label{equation-colim}
(\colim M)^* \cong \Hom_{\Mod\cC}(\cL \Hom_\cC(c,-),\K_\cC) \cong \K_\cC(c) =\K
\end{equation}
which implies that $\colim M \cong \K$. 
\end{ex}

\subsection{Final and Initial Functors}\label{subsec-finial-initial}

A functor $F : \cC \to \cD$ is \emph{final} if, for all $d \in \Obj\cD$, the comma category $d/F$ is (non-empty and) connected. 
Dually, $F$ is \emph{initial} if $F/d$ is connected for every object $d$ in $\cD$. 
In fact, $F$ is initial if and only if $F^{op}$ is final.
For further details on final functors, see \cite[Sections II.6 and IX.3]{maclane} or \cite[Proposition 5.2 in Chapter 2]{kashiwara05}. 
The following lemma gives a description of the comma category in case $F : \cS \to \cP$ is a functor between posets.

\begin{lem}\label{lem-comma-cat-poset}
Let $F: \cS \to \cP$ be a poset morphism. 
For all $p \in \cP$, the comma category $p/F$ is isomorphic to $F^{-1}(\up p)$ and similarly, $F/p$ is isomorphic to $F^{-1}(\down p)$. 
\end{lem}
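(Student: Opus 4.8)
The plan is to unwind the definition of the comma category in the poset setting and to exhibit an explicit isomorphism of categories. Recall that an object of $p/F$ is a pair $(s,f)$ with $s\in\cS$ and $f:p\to F(s)$ a morphism of $\cP$, and that a morphism $(s,f)\to(s',f')$ in $p/F$ is a morphism $g:s\to s'$ of $\cS$ satisfying $F(g)\circ f=f'$. Since $\cP$ is a poset, a morphism $p\to F(s)$ exists if and only if $p\le F(s)$, that is, if and only if $F(s)\in\up p$, i.e. $s\in\Obj F^{-1}(\up p)$; and when it exists it is unique. Hence the assignment $(s,f)\mapsto s$ is a well-defined bijection from $\Obj(p/F)$ onto $\Obj F^{-1}(\up p)$.

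Next I would check that this assignment is functorial and fully faithful. A morphism $(s,f)\to(s',f')$ of $p/F$ is carried by a morphism $g:s\to s'$ of $\cS$, and the triangle condition $F(g)\circ f=f'$ holds automatically because $\Hom_\cP(p,F(s'))$ has at most one element. Thus morphisms $(s,f)\to(s',f')$ in $p/F$ are in bijection with morphisms $s\to s'$ in $\cS$, hence (since $s,s'\in F^{-1}(\up p)$ and $F^{-1}(\up p)$ is a full subposet of $\cS$) with morphisms $s\to s'$ in $F^{-1}(\up p)$. Sending $g$ to $g$ respects identities and composition, so $(s,f)\mapsto s$ defines an isomorphism of categories $p/F\cong F^{-1}(\up p)$.

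For the statement about $F/p$ I would either repeat the same argument with the arrows reversed, or deduce it by duality. Namely, $(F/p)^{op}\cong p/F^{op}$, where $F^{op}:\cS^{op}\to\cP^{op}$ is again a poset morphism; and $(F^{op})^{-1}(\up_{\cP^{op}}p)$ has the same objects as $F^{-1}(\down_\cP p)$, being the opposite of it as a full subposet. Applying the first part to $F^{op}$ and taking opposites yields $F/p\cong F^{-1}(\down p)$.

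I do not anticipate a serious obstacle; the only point requiring care is the observation that the commutativity condition defining morphisms of the comma category is vacuous in the poset case, which is precisely what upgrades the forgetful map $(s,f)\mapsto s$ from an essentially surjective, full functor to an honest isomorphism onto the full subposet $F^{-1}(\up p)$.
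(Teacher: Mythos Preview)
Your proof is correct and follows essentially the same approach as the paper: both unwind the comma category in the poset setting, observe that objects correspond bijectively to $s\in F^{-1}(\up p)$, note that the triangle condition on morphisms is automatic since hom-sets in $\cP$ have at most one element, and handle $F/p$ by duality. Your version is slightly more explicit about functoriality and full faithfulness, but there is no substantive difference.
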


\begin{proof}
We only prove the first part since the second is its dual. 
We first describe $p /F$. 
The objects are
\begin{equation*}
\Obj(p/F) = \{(s, p \leq F(s)) : s \in \cS\}
\end{equation*}
and there is a morphism $(s, p \leq F(s)) \to (t, p \leq F(t))$ in $p/F$ if $s \leq t$ in $\cS$ and $p \leq F(s) \leq F(t)$ in $\cP$. 
However, the first condition implies the last since $F$ is a functor. 
Therefore, we easily see that the categories $F^{-1}(\up p)$ and $p /F$ are isomorphic. 
\end{proof}

\begin{cor}\label{cor-comma-cat-poset-order-emb}
Let $F: \cS \to \cP$ be a full poset morphism. 
For all $p \in \cP$, the comma category $p/F$ is isomorphic to $\up p \cap \im F$ and similarly, $F/p$ is isomorphic to $\down p \cap \im F$. 
\end{cor}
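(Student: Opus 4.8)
The plan is to derive this directly from Lemma \ref{lem-comma-cat-poset} together with the extra hypothesis that $F$ is full (in the sense defined in the paper: $q \le q' \iff F(q) \le F(q')$ for all $q, q' \in \cS$). By Lemma \ref{lem-comma-cat-poset} we already know $p/F \cong F^{-1}(\up p)$ and $F/p \cong F^{-1}(\down p)$, so all that remains is to identify these preimages with the subposets $\up p \cap \im F$ and $\down p \cap \im F$ of $\cP$. First I would observe that a full poset morphism is automatically injective on objects: if $F(s) = F(s')$ then $F(s) \le F(s')$ and $F(s') \le F(s)$, so by fullness $s \le s'$ and $s' \le s$, hence $s = s'$ by antisymmetry. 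Thus $F$ restricts to a bijection between $\Obj\cS$ (equivalently $\Obj F^{-1}(\up p)$) and $\im F$, and one checks this bijection is order-reflecting and order-preserving, i.e.\ an isomorphism of posets $\cS \xrightarrow{\ \sim\ } \im F$.

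Next I would restrict this isomorphism to the relevant full subposets. The full subcategory $F^{-1}(\up p) \subseteq \cS$ has object set $\{s \in \cS : p \le F(s)\}$; under the bijection $F$ this maps onto $\{x \in \im F : p \le x\} = \up p \cap \im F$, viewed as a full subposet of $\cP$. Since both $F^{-1}(\up p)$ and $\up p \cap \im F$ carry the induced order (the former from $\cS$, the latter from $\cP$) and $F$ is an order-isomorphism onto its image, the restriction is again an isomorphism of posets, hence of categories. Composing with the isomorphism $p/F \cong F^{-1}(\up p)$ from Lemma \ref{lem-comma-cat-poset} gives $p/F \cong \up p \cap \im F$. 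The statement for $F/p$ is entirely dual, using $F/p \cong F^{-1}(\down p)$ and the same order-isomorphism.

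There is essentially no obstacle here: the content is the elementary observation that a full poset morphism is an isomorphism onto its image as a poset, after which the corollary is a formal consequence of Lemma \ref{lem-comma-cat-poset}. The only point that warrants a word of care is making sure the two notions of ``full'' do not get conflated --- ``full'' as a subcategory (every hom-set is inherited) versus ``full'' as a poset morphism (order-reflecting) --- but the paper's Definition in \cref{subset-posets} already pins this down, and for poset morphisms the two coincide on the image. So I would keep the proof to two or three sentences: note injectivity on objects, note that fullness makes $F : \cS \to \im F$ an order-isomorphism, and then transport the description of Lemma \ref{lem-comma-cat-poset} across it.
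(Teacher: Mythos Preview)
Your proposal is correct and follows essentially the same approach as the paper: the paper's proof simply asserts that fullness of $F$ gives $F^{-1}(\up p) \cong \up p \cap \im F$ and $F^{-1}(\down p) \cong \down p \cap \im F$, then invokes \cref{lem-comma-cat-poset}. You have merely unpacked the first step by spelling out that a full poset morphism is injective on objects and hence an order-isomorphism onto its image, which is exactly the content the paper leaves implicit.
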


\begin{proof}
Since $F$ is a full poset morphism we have that $F^{-1}(\up p) \cong \up p \cap \im F$ and $F^{-1}(\down p) \cong \down p \cap \im F$. 
The result follows from \cref{lem-comma-cat-poset}. 
\end{proof}

Final and initial functors, and hence comma categories, are particularly interesting due to their relationship with colimits and limits. 

\begin{thm}\label{thm-final-preserves-colimit}
A functor $F : \cC \to \cD$ is final (resp. initial) if and only if, for all functors $G : \cD \to \cE$, we have $\colim GF \cong \colim G$ (resp. $\lim GF \cong \lim G$). 
\end{thm}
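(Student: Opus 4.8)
The statement asked for is the classical theorem that final functors are precisely those along which colimits are invariant, so the plan is to prove it as usual, being careful only about the "only if" direction, which is the substantive half. I will treat the "final/colimit" assertion; the "initial/limit" case follows by passing to opposite categories, since $F$ is initial iff $F^{op}$ is final and $\lim M = (\colim M^{op})^{op}$ in the relevant sense. For the easy direction, suppose $F : \cC \to \cD$ is final and $G : \cD \to \cE$ is any functor. A cocone on $G$ with vertex $E$ is a family $(\sigma_d : G(d) \to E)_{d \in \cD}$ natural in $d$; restricting along $F$ gives a cocone on $GF$. I would show this restriction map, from cocones on $G$ to cocones on $GF$, is a bijection: given a cocone $(\tau_c : GF(c) \to E)_{c \in \cC}$ on $GF$, define $\sigma_d$ for each $d \in \cD$ by choosing an object $(c, f : d \to F(c))$ of the comma category $d/F$ and setting $\sigma_d \defeq \tau_c \circ G(f)$. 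Non-emptiness of $d/F$ makes this possible, and connectedness of $d/F$ together with the cocone relations for $(\tau_c)$ makes it independent of the choice; naturality of $(\sigma_d)$ in $d$ is then a diagram chase. This bijection is natural in $E$, so $G$ and $GF$ co-represent the same functor $\cE \to \setcat$, hence $\colim GF \cong \colim G$ whenever either exists.

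For the converse — the harder direction — assume $\colim GF \cong \colim G$ for all $G : \cD \to \cE$ (for all $\cE$), and deduce that $d/F$ is connected for every $d \in \cD$. The standard trick is to test against a well-chosen target. First I would take $G = \cD(d, -) : \cD \to \setcat$, the representable functor at $d$; its colimit is a singleton (by Yoneda / the co-Yoneda computation, analogous to \cref{ex-computation-colim}). Hence by hypothesis $\colim(\cD(d,-) \circ F) = \colim \cD(d, F-)$ is also a singleton; in particular it is non-empty, which already forces $d/F$ to be non-empty since $\colim_{c} \cD(d, F(c))$ is a quotient of $\coprod_c \cD(d, F(c)) = \Obj(d/F)$. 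To get connectedness, recall that for a functor $H : \cC \to \setcat$ the colimit $\colim H$ is the set of connected components of its category of elements $\int_\cC H$; applied to $H = \cD(d, F-)$, the category of elements is exactly the comma category $d/F$. So $\colim \cD(d, F-) = \pi_0(d/F)$, and this being a singleton says precisely that $d/F$ is connected. That closes the converse.

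The main obstacle is making the "only if" direction fully rigorous at the level of (co)cones rather than waving at "$\colim$ is a representable functor": one must verify that the assignment $(\tau_c) \mapsto (\sigma_d)$ is well-defined (this is where connectedness of $d/F$ is used in an essential way — one walks along a zigzag in $d/F$ and applies the cocone identities for $GF$ at each step), that it lands in genuine cocones on $G$ (naturality in $d$), and that it is inverse to restriction. This is routine but fiddly, so I would isolate it as the core lemma. A secondary point worth stating carefully in the converse is the identification $\int_\cC \cD(d, F-) \cong d/F$ of categories and the fact $\colim_\setcat H \cong \pi_0\!\left(\int H\right)$; both are standard (e.g.\ \cite[Chapter IX]{maclane}) but should be cited. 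Finally, I would remark that the dual statement (initial, limits) is obtained formally by applying the proven case to $F^{op} : \cC^{op} \to \cD^{op}$ and a functor $G^{op}$, together with $\lim G \cong (\colim G^{op})^{op}$; no separate argument is needed.
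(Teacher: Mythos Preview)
Your proposal is correct and follows the standard proof of this classical result. The paper's own proof, however, consists solely of the sentence ``This is classical, see for instance \cite[Theorem 1 and Exercise 5 in Chapter IX.3]{maclane} or \cite[Lemma 8.3.4]{Ri14}'', so your argument is strictly more detailed than what the paper provides; there is nothing further to compare.
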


\begin{proof}
This is classical, see for instance \cite[Theorem 1 and Exercise 5 in Chapter IX.3]{maclane} or \cite[Lemma 8.3.4]{Ri14}.
\end{proof}

\section{Generalized Rank and Multiplicity}\label{subsec-generalized-rank}

After introducing the generalized rank, we show in this section a more general version of \cref{thm-rk-decompose} (see below), stated in \cite{kinser2008} for tree quivers and in \cite{chambersletscher2019} for posets, equating the rank of the limit-to-colimit map with the multiplicity of the interval module.

Let $\cP$ be a connected poset and let $M \in \mod \cP$. 
Suppose $(\lambda_p)_{p \in \cP} : \lim M \to M$ and $(\gamma_p)_{p \in \cP} : M \to \colim M$ are the universal (co)cones. 
There is a canonical map $\Psi_{M} : \lim M \to \colim M$ given by $\gamma_p\circ \lambda_p$, for any $p \in \cP$. 
The fact that $\cP$ is connected ensures that $\Psi_M$ does not depend on the choice of $p$. 
The \emph{generalized rank} (\cite{kimmemoli2021}) of $M$ is $\rk M \defeq \rk \Psi_M$, the rank of the limit-to-colimit map.

\begin{thm}\textbf{\emph{(\cite[Lemma 3.1]{chambersletscher2019}).}}\label{thm-rk-decompose}
Every $M \in \mod\cP$ decomposes as $M \cong (\intMod{\cP})^{\rk M}\oplus N$ where $\im\Psi_N = 0$. 
\end{thm}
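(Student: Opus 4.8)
The plan is to use the Krull--Schmidt decomposition of $M$ together with a direct analysis of how $\lim$, $\colim$, and the canonical map $\Psi$ behave under direct sums. First I would observe that $\lim$ and $\colim$ are additive functors (they are adjoints, hence preserve the relevant biproducts), so for $M \cong \bigoplus_i M_i$ one has $\lim M \cong \bigoplus_i \lim M_i$, $\colim M \cong \bigoplus_i \colim M_i$, and $\Psi_M \cong \bigoplus_i \Psi_{M_i}$ as a block-diagonal map. Therefore $\rk M = \sum_i \rk M_i$ and, more importantly, $\im \Psi_M \cong \bigoplus_i \im\Psi_{M_i}$. This reduces the statement to understanding $\Psi_U$ for $U$ indecomposable: I claim that for an indecomposable $U \in \mod\cP$ either $\Psi_U = 0$, or $U \cong \intMod{\cP}$ and $\rk\Psi_U = 1$.

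For the indecomposable case, the key point is that $\rk\Psi_U \le 1$. To see this, recall from \cref{section::lim-colim} that $\lim U \cong \Hom_{\Mod\cP}(\K_\cP, U)$ and $(\colim U)^* \cong \Hom_{\Mod\cP}(U, \K_\cP)$, and unwinding the definitions shows that $\Psi_U$ factors through these identifications in a way that realizes the composite pairing $\Hom(\K_\cP,U)\otimes \Hom(U,\K_\cP) \to \Hom(\K_\cP,\K_\cP) \cong \K$; more concretely, an element of $\lim U$ is a constant subobject $\K_\cP \to U$ and an element of $(\colim U)^*$ is a map $U \to \K_\cP$, and $\Psi_U$ pairs them by composition. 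If $\rk\Psi_U \ge 1$, pick $v \in \lim U$ and $\phi \in (\colim U)^*$ with $\phi(\Psi_U(v)) \neq 0$; rescaling, the composite $\K_\cP \xrightarrow{\,v\,} U \xrightarrow{\,\phi\,} \K_\cP$ is the identity, so $\K_\cP$ is a direct summand of $U$. Since $U$ is indecomposable and $\K_\cP = \intMod{\cP} \neq 0$, this forces $U \cong \intMod{\cP}$; and one checks directly that $\rk\Psi_{\intMod{\cP}} = 1$ because $\lim \intMod{\cP} \cong \colim\intMod{\cP} \cong \K$ (the poset is connected) with $\Psi$ the identity. Hence $\rk\Psi_U \le 1$ always, with equality exactly when $U \cong \intMod{\cP}$.

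Assembling the pieces: writing $M \cong (\intMod{\cP})^{s} \oplus N$ where $N$ collects all indecomposable summands not isomorphic to $\intMod{\cP}$, additivity gives $\Psi_M \cong \Psi_{\intMod{\cP}}^{\,s} \oplus \Psi_N$, and by the indecomposable analysis every summand of $\Psi_N$ has image zero, so $\im\Psi_N = 0$. Thus $s = \rk M$ and $M \cong (\intMod{\cP})^{\rk M} \oplus N$ with $\im\Psi_N = 0$, as required. The main obstacle I anticipate is the bookkeeping in the middle paragraph: making precise the claim that $\Psi_U$ "pairs" elements of $\lim U$ against functionals on $\colim U$ by composition, i.e. chasing through the adjunction identifications from \cref{section::lim-colim} to see that a rank-one contribution to $\Psi_U$ literally produces a splitting $\intMod{\cP} \to U \to \intMod{\cP}$ of the identity. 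Everything else---additivity of $\lim$, $\colim$, $\Psi$, and of taking images---is formal, and the computation for $\intMod{\cP}$ itself is immediate from connectedness of $\cP$.
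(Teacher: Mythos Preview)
Your argument is correct. One small caveat on bookkeeping: writing $\Psi_M \cong \bigoplus_i \Psi_{M_i}$ as a block-diagonal map is not quite literal when the Krull--Schmidt sum is infinite, since $\lim$ (a right adjoint) need not preserve infinite coproducts. What survives, and is all you use, is $\im\Psi_M = \bigoplus_i \im\Psi_{M_i}$ inside $\colim M = \bigoplus_i \colim M_i$; this holds because $\Psi_M$ factors through any $M(c)$, where the decomposition is a genuine finite direct sum pointwise. Alternatively, your splitting argument applies directly to $N$ (not only to its indecomposable pieces): if $\rk N \ge 1$ then $\intMod{\cP}$ splits off $N$, contradicting Krull--Schmidt uniqueness.

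Your route is genuinely different from the paper's. The paper does not prove \cref{thm-rk-decompose} separately but deduces it from the more general \cref{thm:generalized-rank-general}, valid for all $M \in \Mod\cC$ over a small connected $\cC$, where Krull--Schmidt is unavailable. There the decomposition is constructed directly: $\Psi_M$ factors through each $M(c)$ as an injection $\tilde\lambda_c : \im\Psi_M \hookrightarrow M(c)$ followed by a surjection, one sets $N(c) \defeq \coker\tilde\lambda_c$, verifies this pointwise splitting is natural in $c$, and checks $\Psi_N = 0$ from the universal properties of the (co)limit. Your approach is cleaner and more conceptual in the pointwise-finite setting---it pinpoints that nonzero rank forces a split copy of $\intMod{\cP}$---but it leans on Krull--Schmidt and so does not extend to $\Mod\cC$; the paper's hands-on construction does.
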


We now consider a broader setting. 
Let $\cC$ be a small connected category and let $M \in \Mod\cC$. 
We repeat the above construction and define $\rk M$ as the (possibly infinite) dimension of $\im \Psi_M$. 
More precisely, write $(\lambda_c)_{c \in \cC} : \lim M \to M$ and $(\gamma_c)_{c \in \cC}: M \to \colim M$ for the universal (co)cones. 
There is again a canonical map $\Psi_M : \lim M \to \colim M$ given by $\gamma_c\circ \lambda_c$ for any $c \in \cC$ and the fact that $\cC$ is connected ensures that $\Psi_M$ is well-defined. 
Note that since $\Psi_M$ factors through $M(c)$ for any $c\in \cC$, we have that $\rk M \defeq \rk\Psi_M < \infty$ when $M \in \mod\cC$. 

We obtain the following generalization of \cref{thm-rk-decompose}. 

\begin{thm}\label{thm:generalized-rank-general}
Every $M \in \Mod \cC$ decomposes as $M \cong (\intMod{\cC})^{\rk M}\oplus N$ where $\im \Psi_N = 0$. 
\end{thm}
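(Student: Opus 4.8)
The plan is to mimic the proof of \cref{thm-rk-decompose} (i.e.\ \cite[Lemma 3.1]{chambersletscher2019}) but to replace the ad hoc arguments available for posets with categorical ones, using the representability facts recalled in \cref{section::lim-colim}. Write $V \defeq \im\Psi_M \subseteq \colim M$ and set $r \defeq \dim_\K V$, which is finite because $\Psi_M$ factors through $M(c)$ for any $c$ (for $M \in \Mod\cC$ this $r$ may a priori be infinite; I will address that below). The key observation is that, since $\lim$ is represented by $\intMod{\cC}$, a morphism $\intMod{\cC} \to M$ is the same datum as an element of $\lim M$, and dually a morphism $M \to \intMod{\cC}$ is an element of $(\colim M)^*$; composing such a pair gives an endomorphism of $\intMod{\cC}$, which (as $\cC$ is connected, so $\End(\intMod{\cC}) = \K$) is just multiplication by the scalar obtained by pairing the element of $(\colim M)^*$ against the image in $\colim M$ of the element of $\lim M$. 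Concretely, $\Hom_{\Mod\cC}(\intMod{\cC}, M) \to \Hom_{\Mod\cC}(\intMod{\cC},M) $ factoring through $\Hom_{\Mod\cC}(\intMod{\cC},\intMod{\cC})$ encodes exactly the bilinear form $\lim M \times (\colim M)^* \to \K$ induced by $\Psi_M$.

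First I would build a copy of $(\intMod{\cC})^{r}$ inside $M$ as a direct summand. Choose a subspace $W \subseteq \lim M$ mapping isomorphically onto $V = \im\Psi_M$ under $\overline{\Psi}_M : \lim M \twoheadrightarrow V$, and choose $\phi_1,\dots,\phi_r \in (\colim M)^*$ dual to a basis $v_1,\dots,v_r$ of $V$. Each element of $W$ gives, via representability of $\lim$, a map $\intMod{\cC} \to M$; each $\phi_j$ gives, via representability of $\colim$, a map $M \to \intMod{\cC}$. Assembling these yields maps $\iota : (\intMod{\cC})^{r} \to M$ and $\pi : M \to (\intMod{\cC})^{r}$. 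The pairing computation of the previous paragraph shows $\pi\circ\iota$ is the identity of $(\intMod{\cC})^{r}$: the $(i,j)$ entry is $\phi_j$ evaluated on the image in $\colim M$ of the $i$-th chosen element of $\lim M$, i.e.\ $\phi_j(\overline{\Psi}_M(\text{$i$-th generator})) = \delta_{ij}$ by our choice of $W$ and the $\phi_j$. Hence $\iota$ is a split mono and $N \defeq \coker\iota$ (equivalently $\ker\pi$, via the splitting) gives $M \cong (\intMod{\cC})^{r}\oplus N$.

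It then remains to check two things: that $\im\Psi_N = 0$, and that $r = \rk M$. For the first, $\Psi$ is additive on direct sums (the universal cones of a direct sum are the direct sums of the universal cones, since $\lim$ and $\colim$ are additive), so $\Psi_M = \Psi_{(\intMod{\cC})^{r}} \oplus \Psi_N$; since $\Psi_{\intMod{\cC}} = \id_\K$ is an isomorphism (the limit, colimit, and all structure maps of $\intMod{\cC}$ are $\K$), $\Psi_{(\intMod{\cC})^{r}}$ is an isomorphism of $\K^r$, so $\rk\Psi_M = r + \rk\Psi_N$; but also $\rk\Psi_M = \dim\im\Psi_M = r$ by definition of $r$, forcing $\rk\Psi_N = 0$, i.e.\ $\im\Psi_N = 0$. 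This simultaneously gives $\rk M = \rk\Psi_M = r$, so the exponent in the statement is indeed $\rk M$.

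The main obstacle is the finiteness/welldefinedness of $r$ in the genuinely infinite-dimensional case $M \in \Mod\cC$: although $\Psi_M$ factors through $M(c)$, that only bounds $\dim\im\Psi_M$ by $\dim M(c)$, which need not be finite, so ``$(\intMod{\cC})^{\rk M}$'' must be read as a (possibly infinite) direct sum of copies of $\intMod{\cC}$ indexed by a basis of $\im\Psi_M$, and one must be slightly careful that the splitting argument still produces an honest direct-sum decomposition — here the fact that $\iota$ is a split monomorphism with explicit retraction $\pi$ is exactly what is needed, and no finiteness is required for that step. The only other point demanding care is verifying that the map $\intMod{\cC} \to M$ attached to an element of $\lim M$, post-composed with the map $M \to \intMod{\cC}$ attached to $\phi \in (\colim M)^*$, really is multiplication by $\phi(\overline{\Psi}_M(\ell))$; this is a diagram chase through the adjunction isomorphisms of \cref{section::lim-colim} together with the definition $\Psi_M = \gamma_c\lambda_c$, and I expect it to be routine once the adjunction units/counits are written out.
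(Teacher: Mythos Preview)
Your argument is correct and takes a genuinely different route from the paper's. The paper works pointwise: it fixes splittings $\lim M \cong \im\Psi_M \oplus \ker\Psi_M$ and $\colim M \cong \im\Psi_M \oplus \coker\Psi_M$, observes that the induced composite $\im\Psi_M \rightarrowtail M(c) \twoheadrightarrow \im\Psi_M$ is an isomorphism for each $c$, deduces $M(c) \cong \im\Psi_M \oplus \coker\tilde{\lambda}_c$, and then verifies via a commutative diagram that this pointwise decomposition is respected by every $M(f)$, so that $N = \coker\tilde{\lambda}$; finally $\Psi_N = 0$ is extracted from the universal properties of the (co)limit. You instead work globally: the representability isomorphisms of \cref{section::lim-colim} hand you an explicit split monomorphism $\iota:(\intMod{\cC})^{r} \to M$ with retraction $\pi$, so functoriality of the splitting is automatic and no pointwise check is needed. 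Your route is more conceptual and better exploits the adjunctions already set up; the paper's route gives a concrete pointwise description of the complement $N$. Two small cautions in the genuinely infinite-rank case, though: first, the family $(\phi_j)$ a priori only defines a map into $\prod_j\intMod{\cC}$, and for it to land in $\bigoplus_j\intMod{\cC}$ you should choose the $\phi_j$ to vanish on a fixed complement of $V$ in $\colim M$; second, the cardinal arithmetic ``$r + \rk\Psi_N = r$ forces $\rk\Psi_N = 0$'' fails when $r$ is infinite, but you can instead note directly that under $\colim M \cong V \oplus \colim N$ one has $\im\Psi_N \subseteq \colim N \cap \im\Psi_M = (0\oplus\colim N)\cap(V\oplus 0) = 0$.
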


\begin{proof}
Since $\dim_\K \im \Psi_M = \rk M$, we have that $\im \Psi_M \cong \K^{\rk M}$. 
The diagonal functor admits both left and right adjoints, so it preserves products and coproducts and hence $\Delta(\im \Psi_M) \cong \Delta(\K^{\rk M}) \cong (\Delta(\K))^{\rk M} = (\intMod{\cC})^{\rk M}$. 
Therefore, it suffices to show that $M$ decomposes as $M\cong \Delta(\im \Psi_M) \oplus N$ with $\im\Psi_N = 0$. 

Let $c \in \cC$. 
Since every short exact sequence of vector spaces splits, we have the following decompositions in $\Vect_\K$
\begin{equation}
\lim M \cong \im \Psi_M \oplus \ker \Psi_M \text{ and } \colim M \cong \im \Psi_M \oplus \coker \Psi_M.
\label{eq:psi-ses-split}
\end{equation}
We also have an induced isomorphism
\begin{equation*}
\tilde{\gamma}_c\circ\tilde{\lambda}_c : \im \Psi_M \rightarrowtail M(c) \twoheadrightarrow \im\Psi_M
\end{equation*}
where $\tilde{\lambda}_c$ and $\tilde{\gamma}_c$ are the appropriate restrictions of $\lambda_c$ and $\gamma_c$, respectively. 
This implies that 
\begin{equation*}
\im \Psi_M \cong \im \tilde{\lambda}_c \oplus \ker \tilde{\lambda}_c \cong \im \tilde{\lambda}_c    
\end{equation*}
since $\tilde{\lambda}_c$ is necessarily injective. 
Therefore, 
\begin{equation*}
M(c) \cong \im \tilde{\lambda}_c \oplus \coker \tilde{\lambda}_c \cong \im \Psi_M \oplus \coker \tilde{\lambda}_c \cong \Delta(\im \Psi_M)(c) \oplus \coker \tilde{\lambda}_c,
\end{equation*}
where the first isomorphism comes from a split short exact sequences and the third from the definition of $\Delta$.
This pointwise decomposition is preserved by the maps $M(f) : M(c) \to M(d)$. 
Indeed, this follows from the commutativity of the following diagram
\begin{equation}
    \begin{tikzcd}[ampersand replacement=\&]
\& \im \Psi_M \oplus \coker \tilde{\lambda}_c 
  \arrow[dd, "M(f)"] 
  \arrow[rd, "{\gamma_c = \lbracket\begin{smallmatrix}\tilde{\gamma}_c & 0 \\ 0 & g_c\end{smallmatrix}\rbracket}"] 
\& \\
\im \Psi_M \oplus \ker \Psi_M 
  \arrow[rd, "{\lambda_d = \lbracket\begin{smallmatrix}\tilde{\lambda}_d & 0 \\ 0 & \ell_d\end{smallmatrix}\rbracket}"'] 
  \arrow[ru, "{\lambda_c = \lbracket\begin{smallmatrix}\tilde{\lambda}_c & 0 \\ 0 & \ell_c\end{smallmatrix}\rbracket}"] 
\& 
\& \im \Psi_M \oplus \coker \Psi_M \\
\& \im \Psi_M \oplus \coker \tilde{\lambda}_d 
  \arrow[ru, "{\gamma_d = \lbracket\begin{smallmatrix}\tilde{\gamma}_d & 0 \\ 0 & g_d\end{smallmatrix}\rbracket}"'] 
\&
\end{tikzcd}
\label{eq:poset-relations-preserve-rank-decomp}
\end{equation}
where $\ell$ and $g$ are the restrictions to the respective complements of $\im \Psi_M$.
Since cokernels and direct sums are defined pointwise, we have the decomposition $M \cong \Delta(\im \Psi_M) \oplus N$, where $N \defeq \coker \tilde{\lambda}$. 

We now show that $\Psi_N = 0$. 
Write $(\varphi_c)_{c\in \cC} : \lim N \to N$ and $(\rho_c)_{c\in \cC} : N \to \colim N$ for the universal (co)cones. 
These induce (co)cones of $M$, given by 
\begin{equation*}
\lbracket\begin{smallmatrix}0 \\ \varphi_c\end{smallmatrix}\rbracket_{c\in \cC} : \lim N \to \Delta(\im \Psi_M) \oplus N \cong M 
\end{equation*}
and 
\begin{equation*}
\lbracket\begin{smallmatrix}0 & \rho_c\end{smallmatrix}\rbracket_{c\in \cC} : \Delta(\im \Psi_M) \oplus N \cong M \to \colim N.
\end{equation*} 
By the universal property of the limit and the colimit, there are maps $\nu : \lim N \to \lim M$ and $\mu : \colim M \to \colim N$ such that $\lambda_c \circ \nu = \lbracket\begin{smallmatrix}0 \\ \varphi_c\end{smallmatrix}\rbracket$ and $\mu\circ\gamma_c = \lbracket\begin{smallmatrix}0 & \rho_c\end{smallmatrix}\rbracket$ for all $c\in \cC$. 
The first condition implies that $\nu$ factors through $\ker\Psi_M$ and therefore that $\Psi_M \circ \nu = 0$. 
Both conditions together imply $\Psi_N = \mu\circ \Psi_M \circ \nu $ and so $\Psi_N = 0$.
\end{proof}

\begin{cor}
If $M$ is indecomposable, then $\rk M \neq 0$ if and only if $M \cong \intMod{\cC}$. 
\end{cor}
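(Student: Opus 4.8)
The plan is to deduce this directly from \cref{thm:generalized-rank-general}. First I would suppose $M \in \Mod\cC$ is indecomposable. If $\rk M \neq 0$, then $\rk M \geq 1$, so the decomposition $M \cong (\intMod{\cC})^{\rk M} \oplus N$ supplied by \cref{thm:generalized-rank-general} expresses $M$ as a direct sum in which the summand $(\intMod{\cC})^{\rk M}$ is nonzero (here we use that $\intMod{\cC} \neq 0$, which holds because $\cC$ is nonempty, being connected). Since $M$ is indecomposable, the other summand $N$ must vanish, and moreover $\rk M = 1$, for otherwise $(\intMod{\cC})^{\rk M}$ would itself split nontrivially as $\intMod{\cC} \oplus (\intMod{\cC})^{\rk M - 1}$ with both factors nonzero. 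Hence $M \cong \intMod{\cC}$.

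Conversely, if $M \cong \intMod{\cC}$, I would compute $\rk M$ directly: the limit and colimit of the constant functor $\intMod{\cC} = \Delta(\K)$ over a connected category are both $\K$, and the canonical map $\Psi_{\intMod{\cC}} : \lim \intMod{\cC} \to \colim \intMod{\cC}$ is the identity on $\K$ (it factors as $\gamma_c \circ \lambda_c$ through $\intMod{\cC}(c) = \K$, and each structure map is $\id_\K$). Therefore $\rk M = \rk \Psi_M = 1 \neq 0$.

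There is essentially no obstacle here; the statement is a formal consequence of Krull--Schmidt (available in $\mod\cC$, and the relevant uniqueness argument only needs the indecomposability hypothesis on $M$ together with the fact that $\intMod{\cC}$ is indecomposable) and the decomposition theorem just proved. The one point requiring a line of care is that $\rk M$ is forced to equal exactly $1$ rather than merely being positive, which follows from indecomposability of $M$ combined with indecomposability of $\intMod{\cC}$. If one prefers to avoid invoking Krull--Schmidt for possibly infinite-dimensional modules, the argument above only uses that a nonzero direct summand of an indecomposable module exhausts it, which is immediate from the definition of indecomposable.
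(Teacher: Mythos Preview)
Your proof is correct and matches the paper's intent: the corollary is stated there without proof, as an immediate consequence of \cref{thm:generalized-rank-general}, and your argument spells out exactly that deduction. Your closing remark is apt---only the definition of indecomposability is needed, not the full Krull--Schmidt theorem.
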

\begin{rmk}
A detailed proof of \cref{thm-rk-decompose} for $\cC$ a finite connected posets is given in \cite[Lemma~6.17]{asashibagauthierliu2024}. Their argument uses a diagram similar to \cref{eq:poset-relations-preserve-rank-decomp} to show that the split exact sequences in \cref{eq:psi-ses-split} lift to short exact sequences of functors.
In \cite{asashibagauthierliu2024} and in \cite{chambersletscher2019}, the identity $\rk \Psi_M = \rk \Psi_{\im \Psi_M}$ is used to deduce that $\rk \Psi_N = 0$. However, this implication only holds in $\mod \cC$.
\end{rmk}

\begin{prp}\label{prp-constructionWorks}
The embedding $F : \cM{\rk} \hookrightarrow \cP$ of \cref{construction-M-rk} is such that $\rk M = \rk MF$ for all $M \in \Mod\cP$. 
\end{prp}

\begin{proof}
From the construction of $\cM{\rk}$, there are $s_i \in \sinf^{init}$ and $s_f \in \sinf^{fin}$ such that $s_i \leq s_f$ in $\cM{\rk}$. 
The universal (co)cones of $M$ give rise to maps $\lambda_{s_i} : \lim M \to M(s_i)$ and $\gamma_{s_f} : M(s_f) \to \colim M$. 
Commutativity of these (co)cones ensures that
$\rk \Psi_M = \rk(\gamma_{s_f} \circ M(s_i \leq s_f)\circ \lambda_{s_i})$. 
Similarly, $\lim MF$ and $\colim MF$ give rise to maps $\lambda_{s_i}' : \lim MF \to M(s_i)$ and $\gamma_{s_f}' : M(s_f) \to \colim MF$. 
Again, $\rk \Psi_{MF} = \rk(\gamma_{s_f}' \circ MF(s_i \leq s_f)\circ \lambda_{s_i}') = \rk(\gamma_{s_f}' \circ M(s_i \leq s_f)\circ \lambda_{s_i}')$. 
From \cref{thm-colimit-preserving-restriction}, there are isomorphisms $f : \lim MF \to \lim M$ and $g : \colim M \to \colim MF$. 
By the universal property of (co)limits, $f$ and $g$ are such that $\lambda_{s_i}\circ f = \lambda_{s_f}'$ and $g\circ \gamma_{s_f} = \gamma_{s_f}'$. 
Hence, 
\begin{align*}
    \rk MF  & = \rk(\gamma_{s_f}' \circ M(s_i \leq s_f)\circ \lambda_{s_i}') 
    \\&= \rk(g\circ \gamma_{s_f} \circ M(s_i \leq s_f)\circ \lambda_{s_i}\circ f) 
    \\& = \rk(\gamma_{s_f} \circ M(s_i \leq s_f)\circ \lambda_{s_i}) = \rk M,
\end{align*}
which proves the result.
\end{proof}

Finally, we prove \cref{prp-multiplicity-preserving-restriction}. 
\begin{prp}\textbf{\emph{(Multiplicity Preserving Restriction).}} 
Let $\cJ$ and $\cC$ be small connected categories. 
If $F : \cJ \to \cC$ is final and initial, then $\mult{\intMod{\cC}}{M} = \mult{\intMod{\cJ}}{MF}$ for all functors $M : \cC \to \Vect_\K$.
\end{prp}

\begin{proof}
By \cref{thm:generalized-rank-general}, $\mult{\intMod{\cC}}{M} = \rk M$ and $\mult{\intMod{\cJ}}{MF} = \rk MF$. 
If $F$ is final and initial, we get from \cref{thm-final-preserves-colimit} 
that limits and colimits are preserved and therefore, the equality $\rk M = \rk MF$ 
holds.
\end{proof}

\section{Proof of \cref{thm-colimit-preserving-restriction} ((Co)limit Preserving Restriction)}\label{sec-proofThmA}

We show \cref{thm-colimit-preserving-restriction} in this section. 
The forward implication of \cref{thm-colimit-preserving-restriction} follows from \cref{thm-final-preserves-colimit}. 
The reverse implication is obtained by combining the following two propositions. 

\begin{prp}\label{prp-theoremA-colim}
Let $F:\cJ \to \cC$ be a functor between small categories and suppose that $\cC$ has finite hom-sets. 
If $\colim MF \cong \colim M$ for all $M \in \mod\cC$, then $F$ is final.
\end{prp}

\begin{proof}
Suppose $F$ is not final.
We show that $\colim MF \not\cong \colim M$ for some $M \in \mod\cC$.
By definition, there is a $c \in \cC$ such that $c/F$ is not connected. 
Consider the functor $\Hom_\cC(c,-) : \cC \to \setcat$ and define $M \defeq \cL \circ \Hom_\cC(c,-)$, where $\cL : \setcat \to \Vect_\K$ is the free functor. 
Since $\cC$ has finite hom-sets, $\Hom_\cC(c,c')$ has finitely many morphisms which implies that $M \in \mod\cC$. 
Note that $\Vect_\K$ has all small limits and colimits. By \cref{equation-colim} in \cref{ex-computation-colim}, we have that $\colim M \cong \K$. 

In case the comma category $c/F$ has several connected components, then $MF$ is the direct sum over all the (non-zero) restrictions of $MF$ to each component, thus $\dim \colim M = 1 \neq 2 \le \dim \colim MF$. 

On the other hand, if the comma category $c/F$ is empty, then $MF$ is the zero module, thus $0 = \colim MF \ncong \colim M$. 
\end{proof}

We now show the dual of the previous proposition.

\begin{prp}\label{prp-theoremA-lim}
Let $F:\cJ \to \cC$ be a functor between small categories and suppose that $\cC$ has finite hom-sets.
If $\lim MF \cong \lim M$ for all $M \in \mod\cC$, then $F$ is initial.
\end{prp}

\begin{proof}
Suppose $F$ is not initial. 
This implies that $F^{op} : \cJ^{op} \to \cC^{op}$ is not final. 
By \cref{prp-theoremA-colim}, there is a $\cC^{op}$-module $M^* : \cC^{op} \to \vect_\K$ such that $\colim M^*F^{op} \ncong \colim M^*$. 
Since the categories $\vect_\K$ and $\vect_\K^{op}$ are equivalent, we get that the categories $(\mod \cC)^{op} = (\funct{\cC}{\vect_\K})^{op} \cong \funct{\cC^{op}}{\vect_\K^{op}} \cong \funct{\cC^{op}}{\vect_\K}$ are also equivalent, and this implies that there is a functor $M^{op} : \cC^{op} \to \vect_\K^{op}$ such that $\colim M^{op}F^{op} \ncong \colim M^{op}$. 
This is equivalent to $\lim MF \ncong \lim M$, with $M \in \mod \cC$. 
\end{proof}

This concludes the proof of \cref{thm-colimit-preserving-restriction}. 
We can now prove \cref{cor-colimit-perserving-restriction-Mod}. 

\begin{cor}
Let $\cJ$ and $\cC$ be small categories and suppose that $\cC$ has finite hom-sets. 
A functor $F: \cJ \to \cC$ is final (resp. initial) if and only if $\colim MF \cong \colim M$ (resp. $\lim MF \cong \lim M$) for all functors $M : \cC \to \Vect_\K$. 
\end{cor}

\begin{proof}
The forward direction follows from \cref{thm-final-preserves-colimit} and the converse from \cref{thm-colimit-preserving-restriction}, since $\mod \cC \subseteq \Mod \cC$.
\end{proof}

\section{Proof of \cref{thm-minimal-final-subposet} (Minimal Final Subposet)}\label{sec-proofThmB}

Suppose that $\cP$ respects \cref{asn-poset-fin}. 
As in the introduction, consider the full subposets with elements $\cS_0 \defeq \max\cP$ and
\begin{equation*}
\cS_n \defeq \cS_{n-1} \sqcup \max\{p \in \cP : \up p \cap \cS_{n-1} \text{ is not connected}\}
\end{equation*}
for $n \geq 1$. 
Note that the union is disjoint because if $s \in \cS_{n-1}$, then $\up s \cap \cS_{n-1}$ is automatically connected. 
Indeed, in that case, every element $t \in \up s \cap \cS_{n-1}$ is such that $s \leq t$. 
We consider $\sinf = \bigcup_{n=0}^\infty S_n$. 

Before proving \cref{thm-minimal-final-subposet}, we give an example of a poset not satisfying \cref{asn-poset-fin} and show that it doesn't admit a minimal final subposet. 

\begin{ex}\label{ex-no-min-fin-sub}
We first note that \cref{asn-poset-fin} (\ref{asn-poset-fin-downset}) implies $0 < \#\max\cP < \infty$, but is a strictly stronger assumption. 
For instance, take 
\begin{equation*}
\cP \defeq \down\{(0,1), (1,0)\}\setminus\{(1,0)\} \subset \bbR^2. 
\end{equation*}
It has a maximum (so $0 < 1 = \#\max\cP < \infty$), but it's not the downset of a finite poset. 
We show that $\cP$ does not have a minimal final subposet. 

Suppose for the purpose of contradiction that $\cQ \subseteq \cP$ is a minimal final subposet, i.e., it is a final subposet with $\Obj\cQ$ contained in every final subposet. 
Denote by $G: \cQ \hookrightarrow \cP$ its embedding. 
We may assume that $\cQ$ is a full subposet, since adding relations doesn't change the fact that $\cQ$ is final and has a minimal set of objects. 
We know from \cref{cor-comma-cat-poset-order-emb} that for any $p \in \cP$, the poset $\up p \cap \im G$, which we identify with $\up p \cap \cQ$, is connected (hence non-empty), so there must be a $q \in \cQ$ such that $p \leq q$ in $\cP$. 
In particular, for all $0<x<1$ there is $(x', 0) \in \Obj\cQ$ with $p= (x,0) \leq (x',0)$ in $\cP$. 
Hence, there is also $(x'',0) \in \Obj\cQ$ such that $(x',0) < (x'',0)$. 

However, removing $(x',0)$ still yields a final subposet, which is a contradiction. 
Indeed, suppose that $(x',0)$ is in a zigzag between two objects in $\up p \cap \cQ$. 
We want to show that $\up p \cap (\cQ \setminus \{(x',0)\})$ contains a zigzag with the same endpoints. 
There are two cases. 
If the zigzag locally looks like $q > (x',0) < q'$, then $q = (q_1,0)$ and $q' = (q_1',0)$ for some $0< q_1, q_1' < 1$. Then $q$ and $q'$ are comparable and $(x',0)$ was not necessary to begin with. 
If the zigzag locally looks like $q < (x',0) > q'$, then by the discussion above there is $(x',0) < (x'',0) \in \up p \cap (\cQ\setminus \{(x',0)\})$, so one can replace $(x',0)$ by $(x'',0)$ and still obtain a zigzag connecting the two elements. 
Thus, $\up p \cap (\cQ\setminus \{(x',0)\})$ is connected for all $p \in \cP$, which concludes the argument.
\end{ex}

Although the subposets $\cS_n$, and thus $\cS_\infty$, can be defined for any poset, they are very well-behaved under \cref{asn-poset-fin}. 
This is in part justified by the following lemma, which will be used in the proof of \cref{prp-min-final-is-final}.

\begin{lem}\label{lem-Sn-max-exists}
For all $n\geq 0$, denote $\cQ_n \defeq \{p\in \cP : \up p \cap \cS_{n} \emph{ is not connected} \}$. 
Then, either $\cQ_n = \emptyset$ or $\max \cQ_n \neq \emptyset$. 
\end{lem}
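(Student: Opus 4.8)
The plan is to use part (\ref{asn-poset-fin-maximality}) of \cref{asn-poset-fin} together with a finiteness argument coming from part (\ref{asn-poset-fin-downset}). First I would rewrite $\cQ_n$ in a form where the maximality hypothesis of \cref{asn-poset-fin} applies. The key observation is that, since $\cS_n$ is finite (this needs to be noted first: each $\cS_k$ is finite by induction, using (\ref{asn-poset-fin-downset}) to see $\max\cP$ is finite, and then at each stage we only adjoin the maximal elements of a set, and finitely many such maxima can exist — here one uses (\ref{asn-poset-fin-maximality}) or the downset condition to keep antichains of maxima finite), the set $\up p \cap \cS_n$ being disconnected can be detected by a finite piece of combinatorial data. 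Concretely, $\up p \cap \cS_n$ is disconnected iff the elements of $\cS_n$ above $p$ split into two nonempty groups with no element of $\cP$ above two elements from different groups being below... — more precisely I would partition $\cS_n$ into its "potential connected components seen from above" and observe that $\up p \cap \cS_n$ disconnected means $p$ lies below elements of at least two distinct such classes, where the classes are determined by $p$ itself. Rather than chase this, the cleaner route is: for each partition $\cS_n = \cA \sqcup \cB$ into two nonempty sets, let $\cP_{\cA,\cB} \defeq \{p \in \cP : \up p \cap \cA \neq \emptyset,\ \up p \cap \cB \neq \emptyset\} = \down\cA \cap \down\cB$ (intersected with $\cP$), and note that $\cQ_n = \bigcup \cP_{\cA,\cB}$ where the union ranges over those finitely many partitions for which $\up p \cap \cA$ and $\up p \cap \cB$ witness the disconnection. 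Since there are only finitely many partitions of the finite set $\cS_n$, $\cQ_n$ is a finite union of sets of the form $\down\cA \cap \down\cB \cap \cP$.

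Next I would show each such $\down\cA \cap \down\cB \cap \cP$ has a maximal element when nonempty, and that $\cQ_n$ is, in a suitable sense, "downward stable enough" that $\max\cQ_n \neq \emptyset$ follows. For $\max(\down\cA \cap \down\cB)$: since $\cA, \cB$ are finite, $\down\cA$ and $\down\cB$ each have maximal elements (namely the elements of $\cA$ and $\cB$ respectively are maximal in their downsets, or at least $\max\down\cA \supseteq \cA \neq \emptyset$), so by \cref{asn-poset-fin}(\ref{asn-poset-fin-maximality}) applied with these two downsets, $\max(\down\cA \cap \down\cB) \neq \emptyset$ whenever the intersection is nonempty. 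The subtlety is that I need $\max$ of $\cQ_n$ itself, not of the individual pieces, and $\cQ_n$ is a finite union — but a finite union of sets each having maximal elements need not have a maximal element unless one is careful (a maximal element of one piece could be dominated by a non-maximal element of another piece). To handle this I would argue directly: take $p \in \cQ_n$; then $p \in \down\cA \cap \down\cB \cap \cP$ for a specific witnessing partition; the set $\up p \cap \cQ_n$ — restricted to this or any witnessing partition — I want to show has a maximal element that is still in $\cQ_n$. Actually the right move is: for $p \in \cQ_n$, consider $X_p \defeq \up p \cap \down\cA \cap \down\cB$ for a witnessing $(\cA,\cB)$; this is contained in $\down\cA$ and $\down\cB$, both of which have maximal elements, so by repeatedly applying \cref{asn-poset-fin}(\ref{asn-poset-fin-maximality}) (first to get $\max(\down\cA \cap \down\cB) \neq \emptyset$, then intersecting with $\up p$ which also has maximal elements since... hmm, $\up p$ need not have maximal elements) — this is where care is needed.

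Let me reorganize: the main obstacle is precisely that upsets $\up p$ need not have maximal elements, so I cannot naively intersect. The fix is to work inside $\cS \defeq$ the finite subposet from (\ref{asn-poset-fin-downset}) with $\cP \subseteq \down\cS$, or better, to intersect everything with $\cS_n$-related finite data so that one stays in a regime where \cref{asn-poset-fin}(\ref{asn-poset-fin-maximality}) bites. Concretely: for $p \in \cQ_n$ with witnessing partition $(\cA,\cB)$, I would consider the set $Y \defeq \up p \cap \cP$ and the two subsets $Y \cap \down\cA$, $Y \cap \down\cB$; since $p$ is below some $a \in \cA$ and some $b \in \cB$, and $\cA,\cB \subseteq \cP \subseteq \down\cS$, I can anchor these inside the finite $\cS$. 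The cleanest formulation I would aim for: show $\max(\down\cA \cap \down\cB \cap \cP) \neq\emptyset$ using \cref{asn-poset-fin}(\ref{asn-poset-fin-maximality}) directly with $\cA' \defeq \down\cA \cap \cP$ and $\cB' \defeq \down\cB \cap \cP$ (note $\max\cA' \supseteq \cA$... careful, need $\cA \subseteq \cA'$ and elements of $\cA$ maximal in $\cA'$, true since nothing in $\cP$ above $a\in\cA$ lies in $\down\cA$... no, $a \le a$ so that's fine but $a' > a$ with $a' \in \cA$ is impossible only if $\cA$ is an antichain — so instead take $\max\cA' = \max\cA \neq \emptyset$ since $\cA$ finite). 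Then $\cA' \cap \cB' \neq \emptyset$ (it contains $p$), $\max\cA' \neq \emptyset$, $\max\cB' \neq\emptyset$, so $\max(\cA' \cap \cB') \neq \emptyset$; pick $q \in \max(\down\cA \cap \down\cB \cap \cP)$ with $q \ge p$. Then $q \in \cQ_n$ (it's still below elements of both $\cA$ and $\cB$, so $\up q \cap \cS_n$ is still disconnected via this partition), and finally a short argument — using that there are finitely many partitions and iterating — upgrades such a $q$ to an honest maximal element of all of $\cQ_n$: among the finitely many partition-witnesses, keep passing upward; since each step lands in a fixed finite collection of "$\max(\down\cA\cap\down\cB\cap\cP)$" sets (finitely many, each finite after intersecting appropriately — or at least, one shows the chain must stabilize because we cannot strictly ascend forever within $\cQ_n$ while the witnessing partition ranges over a finite set and each fixed partition gives a set with maximal elements above any given point). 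I expect packaging this last iteration cleanly — ensuring the element is maximal in $\cQ_n$ and not merely in one piece — to be the technically fiddly part, but \cref{asn-poset-fin}(\ref{asn-poset-fin-maximality}) is exactly the hypothesis designed to make it go through.
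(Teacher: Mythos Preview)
Your approach has a genuine gap: the claim that $\cS_n$ is finite is unjustified, and in fact fails under \cref{asn-poset-fin}. Consider $\cP=\{t_1,t_2\}\cup\{a_i:i\in\bbN\}\cup\{b\}$ with $b<a_i<t_1$ and $b<a_i<t_2$ for all $i$, the $a_i$ pairwise incomparable, and no other relations. Every nonempty subset of $\cP$ has a maximal element, so \cref{asn-poset-fin}(\ref{asn-poset-fin-maximality}) holds trivially, and $\cP\subseteq\down\{t_1,t_2\}$ gives (\ref{asn-poset-fin-downset}). But $\cS_0=\{t_1,t_2\}$, $\cQ_0=\{a_i:i\}\cup\{b\}$, and $\max\cQ_0=\{a_i:i\}$ is infinite, so $\cS_1$ is infinite. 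Your partition argument, which needs only finitely many partitions of $\cS_n$, therefore collapses at $n=1$.

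The paper avoids this by never partitioning $\cS_n$. Instead it observes that any disconnection in $\up p\cap\cS_n$ is witnessed by two elements of $\max\cP$: if $s,t$ lie in different components, then so do any $m_s,m_t\in\max\cP$ above them (such maxima exist by (\ref{asn-poset-fin-downset})). This reduces everything to the finitely many pairs $m,m'\in\max\cP$. For each such pair one applies \cref{asn-poset-fin}(\ref{asn-poset-fin-maximality}) to $\cA=\up p\cap\down m$ and $\cB=\up p\cap\down m'$ to produce a candidate maximal element, and then argues---essentially as in your final ``iteration'' paragraph, which has the right shape---that no infinite ascent in $\cQ_n$ is possible because any strictly increasing chain would revisit one of finitely many antichains $\max(\up p\cap\down m\cap\down m')$. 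So your endgame intuition is sound; what is missing is the reduction from $\cS_n$ to $\max\cP$ at the start.
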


\begin{proof}
Suppose that $\cQ_n \neq \emptyset$, i.e., $\up p \cap \cS_{n}$ is not connected for some $p \in \cP$. 
Since $\max \cP \subseteq \cS_{n}$, the set $\up p \cap \cS_{n}$ is non-empty. Therefore there exist disconnected elements $s,t \in \up p \cap \cS_{n}$. 
From \cref{asn-poset-fin} (\ref{asn-poset-fin-downset}) we know that there are $m_s,m_t \in \max\cP$ such that $s \leq m_s$ and $t \leq m_t$. 
Therefore, $m_s$ and $m_t$ are disconnected in $\up p \cap \cS_{n}$. 
\cref{asn-poset-fin} (\ref{asn-poset-fin-maximality}) applied to $\cA = \up p \cap \down m_s $ and $\cB = \up p \cap \down m_t $ yields that there exists some $q \in \max (\up p \cap \down m_s \cap \down m_t)$. 
Then $m_s$ and $m_t$ are disconnected in $\up q \cap \cS_{n}$ because they were already disconnected in $\up p \cap \cS_{n}$. 
In particular, $q \in \cQ_n$. 
We will show that $\max \cQ_n$ contains the maximum over such $q$'s. 

By \cref{asn-poset-fin} (\ref{asn-poset-fin-downset}), there are finitely many pairs $m,n \in \max \cP$ and hence finitely many sets of the form
\begin{equation*}
    \mathcal X_{m,n}\defeq\{q \in \max(\up p \cap \down m \cap \down n) : \up q \cap \cS_{n} \text{ is not connected}\}.
\end{equation*}
Therefore we can choose some $x \in \max~ \bigcup_{m,n \in \max\cP} \mathcal X_{m,n} $. 
To prove the lemma, we show that $x \in \max \cQ_n = \max\{\,q \in \cP : \up q \cap \cS_{n} \text{ is not connected}\,\}$. 
Indeed, take any $x' \geq x$ such that $\up x' \cap \cS_{n}$ is disconnected. 
By the above argument, there exist $m', n' \in \max \cP$ that are disconnected in $\up x' \cap \cS_{n}$.
Since $p \leq x'$, we have $\up x' \subseteq \up p$ which implies that $\up x' \cap \down m' \cap \down n' \subseteq \up p \cap \down m' \cap \down n'$. 
Therefore, we have $x' \leq \max\{\up x' \cap \down m' \cap \down n'\} \leq \max\{\up p \cap \down m' \cap \down n'\} \leq x$, so $x=x'$. 
Note that here the inequalities are element-wise.
\end{proof}

We naturally split the proof of \cref{thm-minimal-final-subposet} into three propositions. 
We start with the following. 

\begin{prp}\label{prp:lEqualsMax}
There is an $\ell \leq \#\max\cP - 1$ such that $\cS_\ell = \cS_{\ell+1} = \cdots = \cS_\infty$. 
Furthermore, this bound is sharp. 
\end{prp}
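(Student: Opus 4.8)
The statement has two parts: first, that the chain $\cS_0 \subseteq \cS_1 \subseteq \cdots$ stabilizes by step $\ell \leq \#\max\cP - 1$; second, that this bound is sharp. I would handle these separately.

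For the stabilization bound, the key observation is that each strict inclusion $\cS_{n-1} \subsetneq \cS_n$ must "merge" at least one pair of maximal elements of $\cP$ that were previously in distinct connected components of the relevant upsets. More precisely, I would track, for each $n$, the partition $\Pi_n$ of $\max\cP$ induced by declaring $m \sim m'$ if $m$ and $m'$ lie in the same connected component of $\up p \cap \cS_n$ for some suitable $p$ — or, more robustly, I would argue directly that if $\cS_n \neq \cS_{n-1}$, then there is a new point $q \in \cS_n \setminus \cS_{n-1}$ with $\up q \cap \cS_{n-1}$ disconnected, and that adding $\max$ of such points to form $\cS_n$ strictly coarsens the connectivity structure on $\max\cP$ witnessed by upsets. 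Since $\#\max\cP$ is finite by \cref{asn-poset-fin}~(\ref{asn-poset-fin-downset}) (as noted in \cref{ex-no-min-fin-sub}), and the relevant partition of $\max\cP$ starts with at most $\#\max\cP$ blocks and strictly decreases in block count at each strict step — eventually reaching the trivial partition after which no $p$ can have $\up p \cap \cS_n$ disconnected — we get at most $\#\max\cP - 1$ strict steps. The subtle point here is making precise the monotonicity: I would use \cref{lem-Sn-max-exists} to ensure $\cQ_{n-1}$ has maximal elements so that $\cS_n$ is actually obtained by adding a nonempty antichain, and then show that for any two maximal elements $m, m'$ of $\cP$ connected in $\up p \cap \cS_{n-1}$, they remain connected in $\up p \cap \cS_n$ (since $\cS_{n-1} \subseteq \cS_n$), while at least one pair that was disconnected becomes connected. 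Concluding $\cS_\ell = \cS_\infty$ from $\cS_\ell = \cS_{\ell+1}$ is immediate by induction on the recursive definition.

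For sharpness, I would exhibit an explicit finite poset $\cP$ with $k$ maximal elements for which exactly $k-1$ strict steps occur. A natural candidate is a "caterpillar"/binary-tree-like or linear chain of merges: arrange $k$ maximal elements $m_1, \dots, m_k$ and points $q_1, \dots, q_{k-1}$ so that $\up q_i \cap \cS_{i-1}$ first becomes disconnected only at stage $i$, i.e., $q_i$ sits below $m_{i+1}$ and below the "already-merged cluster" $\{m_1, \dots, m_i\}$ in such a way that the merge cascades one pair at a time. Concretely, something like $\cP \subseteq \bbR^2$ (or an abstract Hasse diagram) where the $m_j$ are the top vertices, and below them a staircase of $q_i$'s each forcing one additional merge. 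I would then verify by direct inspection that $\cS_0 = \{m_1,\dots,m_k\}$, $\cS_i = \cS_{i-1} \sqcup \{q_i\}$ for $1 \leq i \leq k-1$, and $\cS_{k-1} = \cS_k$.

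**Main obstacle.** The genuine difficulty is the monotonicity argument for the stabilization bound: one must argue that each strict step $\cS_{n-1} \subsetneq \cS_n$ corresponds to a genuine, irreversible decrease in a finite invariant bounded by $\#\max\cP$. The naive "partition of $\max\cP$ by connectivity in $\up p$" depends on the choice of $p$, so the invariant needs to be chosen carefully — e.g., the partition of $\max\cP$ where $m \sim_n m'$ iff $m, m'$ are connected in $\up p \cap \cS_n$ for *every* $p$ below both $m$ and $m'$, or perhaps the join-reachability structure. Getting this invariant to (a) be monotone under $\cS_{n-1} \subseteq \cS_n$, (b) strictly change at each strict step, and (c) force $\cS_n = \cS_{n+1}$ once it is trivial, is where \cref{asn-poset-fin}~(\ref{asn-poset-fin-maximality}) and \cref{lem-Sn-max-exists} will do the real work — precisely as in the proof of \cref{lem-Sn-max-exists}, where the hypothesis was used to reduce disconnectedness of $\up p \cap \cS_n$ to disconnectedness of maximal elements of $\cP$.
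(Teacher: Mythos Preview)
Your sharpness example is essentially the same as the paper's: a ``caterpillar'' poset with $k$ maximal elements $p_1,\dots,p_k$ and a chain $s_{k-1} < \dots < s_1$ where $s_i$ sits below $p_{i+1}$ and below $s_{i-1}$ (hence below $p_1,\dots,p_i$), forcing exactly one new merge per step. That part is fine.

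For the stabilization bound, your partition-coarsening strategy is genuinely different from the paper's argument, and the gap you flag as the ``main obstacle'' is real and, as stated, not obviously fillable. The candidate relation ``$m \sim_n m'$ iff $m,m'$ are connected in $\up p \cap \cS_n$ for \emph{every} $p \leq m,m'$'' is not evidently transitive (if $m_1 \sim_n m_2 \sim_n m_3$, a witness $p \leq m_1, m_3$ need not lie below $m_2$). More seriously, even granting some equivalence relation, the strict-coarsening step is fragile: when $q \in \cS_n \setminus \cS_{n-1}$ connects two maxima $m,m'$ inside $\up q \cap \cS_n$, there may be another $p' \leq m,m'$ incomparable to $q$ for which $\up p' \cap \cS_n$ still separates $m$ and $m'$, so that pair does not merge in your global partition. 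You would then need to argue that \emph{some} pair merges, and it is not clear which pair or why --- \cref{lem-Sn-max-exists} and \cref{asn-poset-fin}~(\ref{asn-poset-fin-maximality}) guarantee that $\max\cQ_{n-1}$ is nonempty, but they do not obviously force a global merge.

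The paper sidesteps this entirely with a backward chain argument rather than a forward invariant. Given $s_n \in \cS_n \setminus \cS_{n-1}$, one shows that among the witnesses to the disconnectedness of $\up s_n \cap \cS_{n-1}$ there must be some $s_{n-1} \in \cS_{n-1}\setminus \cS_{n-2}$ (otherwise the maximality defining $\cS_{n-1}\setminus\cS_{n-2}$ would already connect them). Iterating yields a strict chain $s_n < s_{n-1} < \cdots < s_1$ with $s_i \in \cS_i \setminus \cS_{i-1}$, together with elements $t_{n-1},\dots,t_1$ where $s_i,t_i$ are disconnected in $\up s_{i+1} \cap \cS_i$. Each $t_i$ lies below some $p_i \in \max\cP$ by \cref{asn-poset-fin}~(\ref{asn-poset-fin-downset}), and one checks directly that the $p_i$ are pairwise distinct (equality would produce a zigzag connecting $s_i$ and $t_i$ in $\up s_{i+1} \cap \cS_i$). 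Together with two further distinct maxima above $s_1$, pigeonhole gives $n+1 \leq \#\max\cP$. This argument uses only \cref{asn-poset-fin}~(\ref{asn-poset-fin-downset}) and never needs a globally well-defined partition --- the missing idea in your plan is precisely this backtracking that extracts one \emph{new} maximal element per level.
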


\begin{proof}
To see that the bound in \cref{prp:lEqualsMax} is sharp, we give the following poset, which has $\cS_0 \subsetneq \cS_1 \subsetneq \cdots \subsetneq \cS_{\#\max\cP-1}$.
\begin{equation*}\label{ex:lEqualsMax}
\cP = 
\begin{tikzcd}[sep=1em]
& & & & p_{\#\max\cP}\\
& & p_3 \\
& p_2\\
p_1 & \arrow[l] \arrow[u] s_1 & \arrow[l] \arrow[uu] s_2 & \cdots \arrow[l] & s_{\#\max\cP-1}\arrow[l] \arrow[uuu]
\end{tikzcd}
\end{equation*}

We now demonstrate that this constitutes the worst case.
If $\max\cP$ has a unique element, then the statement is obvious. 
Suppose now that $\#\max\cP \geq 2$ and let $n \geq 1$. We write $\cS_{-1} \defeq \emptyset$. 
Suppose that $\cS_n\setminus\cS_{n-1} \neq \emptyset$ and take $s_n \in \cS_n\setminus\cS_{n-1}$. 
We claim that there exist $s_{n-1}, t_{n-1}$ disconnected in $\up s_n \cap \cS_{n-1}$ such that $s_{n-1} \in \cS_{n-1}\setminus \cS_{n-2}$. 

Suppose by contradiction this is not the case, i.e., all $s,t$ disconnected in $\up s_n \cap \cS_{n-1}$ are in $\cS_{n-2}$. 
This implies that these $s,t$ are disconnected in $\up s_n \cap \cS_{n-2}$. 
However, $s_n \notin \max\{p \in \cP : \up p \cap \cS_{n-2} \text{ is not connected}\} = \cS_{n-1}\setminus \cS_{n-2}$ which implies there is a $q \in \cS_{n-1}\setminus \cS_{n-2}$ such that $s_n < q$ which connects some $s,t$. 
But then, $s$ and $t$ are connected in $\up s_n \cap \cS_{n-1}$ via $s \geq q \leq t$, which is a contradiction. 

By repeating the process described above, we obtain a strictly increasing sequence $s_n < s_{n-1} < \cdots < s_1$ together with associated elements $t_{n-1}, t_{n-2}, \ldots, t_1$ as above. 
By \cref{asn-poset-fin} (\ref{asn-poset-fin-downset}), we know that each $t_i$ is such that $t_i \leq p_i$ for at some  $p_i \in \max\cP$. 
Furthermore, we show that $p_i \neq p_j$ if $i \neq j$: 

Suppose to the contrary that $p_i = p_j$ for $i > j$. 
But then the subposet 
\begin{equation*}
\begin{tikzcd}[sep=1em]
& & p_i = p_j\\
s_j & t_j \arrow[ur] & &\\
s_{j+1} \arrow[u] \arrow[ur] & & \\
s_i \arrow[u] & t_i \arrow[uuur] & \\
s_{i+1} \arrow[u] \arrow[ur] & & 
\end{tikzcd}
\end{equation*}
yields a contradiction because  $s_i$ and $t_i$ are connected in $\up s_{i+1}\cap \cS_{i}$. 

By construction, $s_1$ is such that $p_0 > s_1 < p_0'$ for $p_0, p_0' \in \max\cP = \cS_0$ where, by the same argument as above, $p_0, p_0' \neq p_i$ for all $i = 1, \ldots, n-2, n-1$. 
By the pigeonhole principle, $n \leq \#\max\cP - 1$ which concludes the proof of the Proposition. 
\end{proof}

\begin{prp}\label{prp-min-final-is-final}
The embedding $F : \sinf \hookrightarrow \cP$ is final. 
\end{prp}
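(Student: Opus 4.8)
The plan is to show that $p/F$ is connected for every $p \in \cP$. By \cref{lem-comma-cat-poset}, this comma category is isomorphic to $F^{-1}(\up p) = \up p \cap \sinf$, so it suffices to prove that $\up p \cap \sinf$ is connected (and nonempty) for all $p \in \cP$. Nonemptiness is immediate: by \cref{asn-poset-fin}~(\ref{asn-poset-fin-downset}) there is some $m \in \max\cP$ with $p \leq m$, and $\max\cP = \cS_0 \subseteq \sinf$, so $m \in \up p \cap \sinf$. The whole content is connectedness.

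First I would reduce to the stabilized stage. By \cref{prp:lEqualsMax} there is $\ell$ with $\sinf = \cS_\ell$, and by construction of $\cS_{\ell+1}$ together with $\cS_{\ell+1}=\cS_\ell$, the set $\{q \in \cP : \up q \cap \cS_\ell \text{ is not connected}\}$ has empty $\max$ — wait, more precisely, $\max\{q : \up q \cap \cS_\ell\text{ not connected}\} \subseteq \cS_{\ell+1}\setminus\cS_\ell = \emptyset$. Combined with \cref{lem-Sn-max-exists} (applied with $n=\ell$), this forces $\cQ_\ell = \{q : \up q \cap \cS_\ell \text{ not connected}\} = \emptyset$. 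In other words, \emph{for every} $q \in \cP$, the set $\up q \cap \cS_\ell = \up q \cap \sinf$ is connected. That is exactly the statement we want. So the real work is entirely inside \cref{lem-Sn-max-exists} and \cref{prp:lEqualsMax}, and here we just assemble: connectedness of $\up p \cap \sinf$ for all $p$ is equivalent to $\cQ_\ell = \emptyset$, which holds because $\cQ_\ell \neq \emptyset$ would (by \cref{lem-Sn-max-exists}) give $\max\cQ_\ell \neq \emptyset$, hence $\cS_{\ell+1} \supsetneq \cS_\ell$, contradicting stabilization.

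The main obstacle — or rather the one subtlety to be careful about — is making sure the indexing lines up: \cref{lem-Sn-max-exists} is phrased for $\cS_n$ with the auxiliary set $\cQ_n$, and one must invoke it at $n = \ell$ and use that $\cS_\ell = \cS_{\ell+1}$ to read off $\cQ_\ell = \emptyset$ rather than merely $\max\cQ_\ell = \emptyset$. Once $\cQ_\ell = \emptyset$ is established, every $\up p \cap \sinf$ is connected, the comma categories $p/F$ are all connected and nonempty, and $F$ is final by definition. I do not expect any further difficulty; the heavy lifting has been front-loaded into the two preceding lemmas/propositions.
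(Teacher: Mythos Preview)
Your proof is correct and follows essentially the same approach as the paper's: both combine \cref{prp:lEqualsMax} (stabilization at some $\ell$) with \cref{lem-Sn-max-exists} to force $\cQ_\ell=\emptyset$, hence $\up p\cap\sinf$ connected for all $p$. The paper phrases the final step as a contradiction (pick $x\in\max\cQ_\ell$, observe $x\in\cS_{\ell+1}=\sinf$, so $\up x\cap\sinf$ is connected yet disconnected), whereas you argue the contrapositive directly; the content is identical.
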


\begin{proof}
For ease of notation, we identify $\im F$ with $\sinf$. 
Since $\sinf$ is full, $F$ is full. 
Hence, by \cref{cor-comma-cat-poset-order-emb}, we need to show that for all $p \in \cP$, the poset $\up p \cap \sinf$ is connected. 
Suppose by contradiction that $\up p \cap \sinf$ is not connected for some $p \in \cP$. 
By \cref{prp:lEqualsMax}, there is an $\ell \leq \#\max\cP-1$ for which $\up p \cap \cS_\ell = \up p \cap \sinf$ and it follows from \cref{lem-Sn-max-exists} that there exists $x \in \max \{q \in \cP : \up q \cap \cS_\ell \text{ is not connected}\}$. 
This implies that $x \in \cS_{\ell+1}$ and again by \cref{prp:lEqualsMax}, that $x \in \sinf$. 
Therefore, $\up x \cap \sinf$ is connected, since every $q \in \up x \cap \sinf$ is such that $x \leq q$. 
However, we have that $\up x \cap \cS_\ell = \up x \cap \sinf$ is disconnected, a contradiction. 
\end{proof}

\begin{prp}
If $G : \cQ \hookrightarrow \cP$ is a final poset embedding, then $\Obj\sinf \subseteq \Obj\cQ$. 
\end{prp}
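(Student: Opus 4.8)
The plan is to show by induction on $n$ that $\Obj\cS_n \subseteq \Obj\cQ$, so that $\Obj\sinf = \bigcup_{n} \Obj\cS_n \subseteq \Obj\cQ$. By \cref{lem-comma-cat-poset}, finality of $G$ says precisely that $G^{-1}(\up p)$ is non-empty and connected for every $p \in \cP$; concretely, the set $\{q \in \Obj\cQ : p \leq q\}$ is then non-empty, and any two of its members are joined by a zigzag in $\cP$ all of whose terms lie in $\Obj\cQ$ and are $\geq p$. (If $\cQ$ is a full subposet one may instead invoke \cref{cor-comma-cat-poset-order-emb}.)

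For the base case $\cS_0 = \max\cP$: if $m \in \max\cP$ then $\up m = \{m\}$, so non-emptiness of $G^{-1}(\up m)$ forces $m \in \Obj\cQ$. For the inductive step, assume $\Obj\cS_{n-1} \subseteq \Obj\cQ$ and take $q \in \cS_n \setminus \cS_{n-1}$, that is $q \in \max\cQ_{n-1}$, where $\cQ_{n-1} \defeq \{p \in \cP : \up p \cap \cS_{n-1} \text{ is not connected}\}$ as in \cref{lem-Sn-max-exists}. Suppose, for contradiction, that $q \notin \Obj\cQ$. By \cref{asn-poset-fin} (\ref{asn-poset-fin-downset}), every element of $\cP$ lies below some element of $\max\cP \subseteq \cS_{n-1}$, so $\up q \cap \cS_{n-1}$ is non-empty, while by definition of $\cQ_{n-1}$ it is disconnected; pick $a, b$ in distinct connected components. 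Both lie in $\Obj\cQ$ by the inductive hypothesis, so finality provides a zigzag $a = c_0, c_1, \ldots, c_k = b$ with every $c_i \in \Obj\cQ$, every $c_i \geq q$, and consecutive $c_i$ comparable in $\cP$. Since $q \notin \Obj\cQ$ we get $c_i > q$ for all $i$; as $q$ is maximal in $\cQ_{n-1}$, no such $c_i$ lies in $\cQ_{n-1}$, that is, $\up c_i \cap \cS_{n-1}$ is connected for every $i$.

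The final move is to promote this zigzag to one lying inside $\up q \cap \cS_{n-1}$, yielding the contradiction. For each $i$, choose (again by \cref{asn-poset-fin} (\ref{asn-poset-fin-downset})) a maximal element $m_i \in \up c_i \cap \max\cP$; then $m_i \in \up q \cap \cS_{n-1}$. Now $a \leq m_0$ and $b \leq m_k$ connect $a$ to $m_0$ and $b$ to $m_k$ within $\up q \cap \cS_{n-1}$; and for each $i$, if $c_i \leq c_{i+1}$ then $m_i$ and $m_{i+1}$ both lie in the connected set $\up c_i \cap \cS_{n-1} \subseteq \up q \cap \cS_{n-1}$ (and symmetrically, via $\up c_{i+1}$, if $c_i \geq c_{i+1}$). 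Concatenating these paths shows $a$ and $b$ lie in one component of $\up q \cap \cS_{n-1}$, contradicting the choice of $a, b$. Hence $q \in \Obj\cQ$, closing the induction.

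I expect the crux to be exactly this last paragraph. Finality only tells us that $\up q \cap \cQ$ is connected, which a priori could be achieved by elements of $\cQ$ outside $\cS_{n-1}$; the key point is that maximality of $q$ in $\cQ_{n-1}$ makes $\up c_i \cap \cS_{n-1}$ connected for each link $c_i$ of the connecting zigzag, and that \cref{asn-poset-fin} (\ref{asn-poset-fin-downset}) supplies the maximal elements $m_i$ bridging successive links. The base case and the set-up of the contradiction are routine.
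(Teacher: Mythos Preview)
Your proof is correct and follows essentially the same approach as the paper's: induction on $n$, contradiction at the inductive step via a zigzag in $\up q\cap\cQ$, and then the observation that maximality of $q$ in $\cQ_{n-1}$ forces each $\up c_i\cap\cS_{n-1}$ to be connected. The only cosmetic difference is in the final bridging step: the paper argues directly that $\bigcup_i(\up c_i\cap\cS_{n-1})$ is connected (consecutive pieces are nested, hence their union is connected) and contains $a,b$, whereas you thread explicit maximal elements $m_i$ through the pieces; both arguments rely on \cref{asn-poset-fin}(\ref{asn-poset-fin-downset}) to ensure the pieces are nonempty.
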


\begin{proof}
If $\cQ$ is not a full subposet, we can add relations to obtain a full and final subposet $\cQ'$ with $\Obj \cQ = \Obj \cQ'$. Therefore, we assume without loss of generality that $\cQ$ is full. By \cref{cor-comma-cat-poset-order-emb}, we identify $\cQ$ with $\im G$. 

By hypothesis, $\up p \cap \cQ$ is connected for every $p \in \cP$. 
We will show by induction that $\Obj\sinf \subseteq \Obj\cQ$. 
First, for any $p\in \max \cP$ we have $\up p \cap \cQ =\{p\} \cap \cQ$ is nonempty, so $\cS_0 = \max\cP \subseteq \cQ$. 
Suppose that $\Obj\cS_n \subseteq \Obj\cQ$ for some $n \geq 0$. 
Suppose by contradiction that $\Obj\cS_{n+1} \nsubseteq \Obj\cQ$ and take $x \in \cS_{n+1} \setminus \cQ$. 
Because $\Obj\cS_n \subseteq \Obj\cQ$, we have that $\Obj(\cS_{n+1} \setminus \cQ) \subseteq \Obj(\cS_{n+1}\setminus \cS_n)$ and therefore, $x \in \max\{p \in \cP : \up p\cap \cS_n \text{ is not connected}\}$. 
This implies that there are $s, t \in \up x \cap \cS_n$ disconnected. 
Since $\up x \cap \cQ$ is connected, there is a zigzag $s \geq q_1 \leq q_2 \geq \cdots \leq q_{k-1} \geq q_k \leq t$ in $\up x \cap \cQ$. 
Because $x \notin \cQ$, we have that $x < q_j$ for all $1 \leq j \leq k$. 
Since $x \in \max\{p \in \cP : \up p\cap \cS_n \text{ is not connected}\}$ and $x < q_j$, we know that $\up q_j \cap \cS_n$ is connected for all $1 \leq j \leq k$. 

We claim that $s$ and $t$ are connected in $\up x \cap \cS_n$, which would be a contradiction. 
Since $q_j \leq q_{j+1}$ or $q_j \geq q_{j+1}$, we have that $\up q_j \supseteq \up q_{j+1}$ or $\up q_j \subseteq \up q_{j+1}$ for all $j$ and therefore, $(\up q_j \cap \cS_n) \cup (\up q_{j+1} \cap \cS_n)$ is connected for all $j$. 
This implies that $\cup_{j=1}^k(\up q_j \cap \cS_n) = (\cup_{j=1}^k \up q_j) \cap \cS_n$ is connected. 
However, we have that $s, t \in (\cup_{j=1}^k \up q_j) \cap \cS_n \subseteq \up x \cap \cS_n$ so $s$ and $t$ are connected in $\up x \cap \cS_n$. 

This implies that $\up x \cap \cS_n$ is connected, which contradicts our hypothesis that $\Obj\cS_{n+1} \nsubseteq \Obj\cQ$. 
Hence, $\Obj\cS_{n+1} \subseteq \Obj\cQ$ and consequently, by \cref{prp:lEqualsMax}, $\Obj\cS_{\#\max\cP-1} = \Obj\sinf \subseteq \Obj\cQ$. 
\end{proof}

Before proving \cref{prp-converseB}, we give a counter-example of this result when the poset morphism is not full.

\begin{ex}\label{ex-counter-ex-converse-B}
Consider
\begin{equation*}
\cS = 
\begin{tikzcd}[sep=1.3em]
s_3 & s_4\\
s_1 \arrow[u] & s_2 \arrow[ul]\arrow[u]
\end{tikzcd}
\!\qquad {\rm and } \qquad \;
\cP = \!\!
\begin{tikzcd}[column sep=0.4em, row sep=1.3em]
&p_4\\
&p_3 \arrow[u]\\
p_1\arrow[ur] & & p_2 \arrow[ul]
\end{tikzcd}
\end{equation*}
with $F : \cS \to \cP$ given on objects by $F(s_i) = p_i$ for $i = 1,2,3,4$. This functor is not final, since $p_3/F$ is not connected, but $\mult{\intMod{\cP}}{M} = \mult{\intMod{\cS}}{MF}$ for all $M \in \mod\cP$. 
This is a straightforward verification using the twelve distinct isomorphism classes of indecomposable representations of $\cP$. 
\end{ex}

\begin{rmk}
    The previous example illustrates the possibility that the poset $\cP$ can have more indecomposables than $\cS$, even if it is a refinement of the poset $\cS$---a phenomenon that does not occur when one considers  $\cP$-spaces, that is, vector spaces with a family of subspaces indexed and ordered over $\cP$.
\end{rmk}

We now prove \cref{prp-converseB}.

\begin{prp}
Suppose that $F:\cS \to \cP$ is a full poset morphism between connected posets and that $\cP$ satisfies \cref{asn-poset-fin} (resp. \cref{asn-poset-fin-dual}). If $\mult{\intMod{\cP}}{M} = \mult{\intMod{\cS}}{MF}$ for all $M \in \mod\cP$, then $F$ is final (resp. initial). 
\end{prp}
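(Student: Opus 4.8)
The plan is to prove the contrapositive: assuming $F$ is not final, I will produce an $M\in\mod\cP$ for which $\mult(M,\intMod\cP)\neq\mult(MF,\intMod\cS)$. (The ``initial'' statement then follows by applying this to $F^{op}:\cS^{op}\to\cP^{op}$, which is again full between connected posets; one uses the anti-equivalence $\mod\cP\simeq(\mod\cP^{op})^{op}$ from the proof of \cref{prp-theoremA-lim}, under which $\intMod\cP\leftrightarrow\intMod{\cP^{op}}$, restriction along $F$ corresponds to restriction along $F^{op}$, multiplicities of indecomposables are preserved, and ``$F^{op}$ final'' means ``$F$ initial''.) As a preliminary reduction, a full poset morphism is injective on objects --- if $F(q)=F(q')$ then $F(q)\le F(q')$, so $q\le q'$ by fullness, and symmetrically $q'\le q$ --- so we may assume $F$ is the embedding of a full connected subposet $\cS\subseteq\cP$ and $MF=M|_\cS$. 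By \cref{cor-comma-cat-poset-order-emb}, failure of finality provides a $p\in\cP$ for which $\up p\cap\cS$ is empty or disconnected, and I treat these two cases separately.

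In the empty case, I put $D\defeq\{q\in\cP:\up q\cap\cS\neq\emptyset\}$. This is a downset, hence convex; it contains $\cS$; and it is connected because every $q\in D$ is comparable (below) to some element of $\cS$ and $\cS$ is connected. Since $p\notin D$ we have $\cS\subseteq D\subsetneq\cP$, so $M\defeq\intMod D$ is an interval module of $\cP$ with $M\not\cong\intMod\cP$, whence $\mult(M,\intMod\cP)=0$, whereas $MF=\intMod{D\cap\cS}=\intMod\cS$ and $\mult(MF,\intMod\cS)=1$.

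In the disconnected case, I write $\up p\cap\cS=G_1\sqcup G_2$ with $G_1,G_2$ nonempty and no element of $G_1$ comparable to one of $G_2$ (take $G_1$ a connected component and $G_2$ the union of the rest). I define $M\in\mod\cP$ by $M(x)=\K$ if $x\notin\up p$ and $M(x)=\K^{J(x)}$ with $J(x)\defeq\{i\in\{1,2\}:\up x\cap G_i\neq\emptyset\}$ if $x\in\up p$, and I let the structure map of a relation $x\le y$ be: the identity if $x,y\notin\up p$; the diagonal $\K\to\K^{J(y)}$ if $x\notin\up p\le y$; and the coordinate projection $\K^{J(x)}\to\K^{J(y)}$ (note $J(y)\subseteq J(x)$ when $x\le y$) if $x,y\in\up p$. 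Functoriality is a routine case check, and $M$ is pointwise at most $2$-dimensional, so $M\in\mod\cP$. Then I claim two things. First, $MF\cong\intMod\cS$: every $s\in\cS$ is either outside $\up p$ (so $M(s)=\K$) or lies in exactly one $G_i$ (so $J(s)=\{i\}$ and $M(s)\cong\K$), and one checks directly that every structure map of $MF$ is the identity; hence $\mult(MF,\intMod\cS)=1$. Second, $\rk M=0$, and therefore $\mult(M,\intMod\cP)=\rk M=0$ by \cref{thm:generalized-rank-general} (as in the proof of \cref{cor-multiplicity-preserving-restriction}): choosing $s_1\in G_1$ and $s_2\in G_2$ we have $p\le s_1$, $p\le s_2$, $M(p)=\K^2$, $M(p\to s_1)=\pi_1$, and $M(p\to s_2)=\pi_2$, so the universal colimit cocone satisfies $\gamma_p=\gamma_{s_1}\circ\pi_1=\gamma_{s_2}\circ\pi_2$; evaluating at $(1,0)$ gives $\gamma_{s_1}=0$ and at $(0,1)$ gives $\gamma_{s_2}=0$, hence $\gamma_p=0$ and $\Psi_M=\gamma_p\circ\lambda_p=0$.

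In both cases $\mult(M,\intMod\cP)=0\neq 1=\mult(MF,\intMod\cS)$, the desired contradiction, so $F$ is final. The crux is the construction in the disconnected case: one must produce a module that is $\K$ everywhere along $\cS$ yet ``splits'' at $p$ into the two sides of the disconnection, so that its limit-to-colimit map dies over $\cP$ --- the two projections out of $M(p)=\K^2$ collapse the colimit --- but survives over $\cS$, where $p$ is absent. I expect the bookkeeping (well-definedness of $M$, pointwise finite-dimensionality, and $MF\cong\intMod\cS$) to be straightforward; the one genuinely load-bearing step is the vanishing $\gamma_p=0$, which is exactly where disconnectedness of $\up p\cap\cS$ enters.
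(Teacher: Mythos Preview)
Your argument is correct and, in the disconnected case, takes a genuinely different route from the paper. The paper invokes \cref{asn-poset-fin} through three technical lemmas (\cref{lem:max-in-image}, \cref{lem-wlog}, \cref{lem:connectedComp}) to locate a single special point $x$ at which the counterexample module is $\K^2$, with $\K$ everywhere else; the assumption is needed to guarantee that $z$ and $z'$ lie in distinct connected components of $\up x\setminus\{x\}$, so that the two projections out of $\K^2$ are globally consistent. Your construction sidesteps this by letting $M$ be $\K^{J(x)}$ at \emph{every} $x\in\up p$, so that functoriality is automatic from $J(y)\subseteq J(x)$ and no delicate choice of $x$ is required. As a consequence your proof never uses \cref{asn-poset-fin} (or its dual) and in fact establishes the proposition without that hypothesis---a strictly stronger statement. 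The paper's approach has the minor advantage that its counterexample is one-dimensional at all but one point, and it explicitly shows $\colim M=0$; yours only yields $\rk M=0$, but that suffices via \cref{thm:generalized-rank-general}. In the empty case your downset $D=\{q:\up q\cap\cS\neq\emptyset\}$ differs from the paper's $\cP\setminus\up p$, but both give an interval module restricting to $\intMod\cS$ and vanishing somewhere, so both work.
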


We only do the proof in the case $\cP$ satisfies \cref{asn-poset-fin}. 
We suppose that $F: \cS \to \cP$ is not final and for all $x \in \cP$, we identify, by \cref{cor-comma-cat-poset-order-emb}, the comma category $x/F$ with $\up x \cap \im F$. 
We split the proof into two cases. 

\begin{prp}\label{prp-empty-case}
    If $\up p \cap \im F$ is empty for some $p\in \cP$, then there exists a persistence module $M : \cP \to \vect_\K$ such that $\mult{\intMod{\cP}}{M} \neq \mult{\intMod{\cS}}{MF}$.
\end{prp}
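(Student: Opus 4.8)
The plan is to exhibit one explicit module. Set $\cI \defeq \cP \setminus \up p$; since $\up p$ is an upset, $\cI$ is a downset of $\cP$, in particular a convex full subposet, so $M \defeq \intMod{\cI}$ is a well-defined object of $\mod\cP$. The hypothesis $\up p \cap \im F = \emptyset$ says exactly that $\im F \subseteq \cI$; as $\cS$ is connected---hence nonempty---this also forces $\cI \neq \emptyset$, and $\cI \subsetneq \cP$ because $p \notin \cI$. These two facts about $\cI$ are all that the argument uses.

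The first step is to record that $\mult(M, \intMod{\cP}) = 0$: since $M(p) = \intMod{\cI}(p) = 0$ while $\intMod{\cP}(p) = \K \neq 0$, the module $\intMod{\cP}$ cannot be a direct summand of $M$, so its multiplicity in the Krull--Schmidt decomposition of $M$ is $0$. The second step is to show $MF \cong \intMod{\cS}$. Because $F$ is a full poset morphism it is injective on objects and the induced map $\cS \to \im F$ is an isomorphism of posets. For every object $s$ of $\cS$ we have $F(s) \in \im F \subseteq \cI$, so $(MF)(s) = \intMod{\cI}(F(s)) = \K$; and for every morphism $s \leq s'$ of $\cS$ both $F(s)$ and $F(s')$ lie in the full subposet $\cI$, so $F(s) \leq F(s')$ is a morphism of $\cI$ and $(MF)(s \leq s') = \id$. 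Hence $MF$ is the constant functor $\K_{\cS} = \intMod{\cS}$, which is indecomposable since $\cS$ is connected, so $\mult(MF, \intMod{\cS}) = 1$.

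Combining the two steps gives $\mult(M, \intMod{\cP}) = 0 \neq 1 = \mult(MF, \intMod{\cS})$, the desired conclusion. I do not expect a genuine obstacle here; the only points that need care are that $\cI = \cP \setminus \up p$ is genuinely convex, so that $\intMod{\cI}$ is a functor in the sense of \cref{subsec-persistence-modules}, and that fullness of $F$ really identifies $\cS$ with $\im F$ on the nose. Note also that this argument does not use \cref{asn-poset-fin}: that hypothesis is needed only in the remaining case, where $\up p \cap \im F$ is nonempty but disconnected.
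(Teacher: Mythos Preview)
Your proof is correct and follows essentially the same approach as the paper: both take $M = \intMod{\cP \setminus \up p}$, observe that $MF = \intMod{\cS}$ because $\im F$ lies in the downset $\cP \setminus \up p$, and conclude by noting $M(p) = 0$ forces $\mult(M,\intMod{\cP}) = 0$. Your justification that $\cP \setminus \up p$ is convex because it is a downset is in fact cleaner than the paper's phrasing.
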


\begin{proof}
    We take $M$ to be $\intMod{\cP \setminus \up p}$.  
    Since the complement $\cP \setminus \up p$ is a downset, it is convex, and therefore $M$ is a $\cP$-module. 
    By hypothesis, we have $F(s) \in \cP \setminus \up p$, for all $s \in \cS$. Thus, $MF$ is the interval module $\intMod{\cS}$. 
    However, $\mult{\intMod{\cP}}{M} = 0$ because $M(p) = 0$. 
\end{proof}

\begin{prp}\label{prp-nonempty-case}
    Suppose that $\up p \cap \im F$ is nonempty for all $p \in \cP$. 
    Suppose however that there exists $x \in \cP$ such that $\up x \cap \im F$ is not connected. 
    Then, there exists a persistence module $M : \cP \to \vect_\K$ such that $\mult{\intMod{\cP}}{M} \neq \mult{\intMod{\cS}}{MF}$.
\end{prp}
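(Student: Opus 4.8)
The plan is to produce an explicit distinguishing $\cP$-module, namely the pointwise right Kan extension $M \defeq \mathrm{Ran}_F\, \intMod{\cS}$ of the entire interval module of $\cS$ along $F$. First I would reduce to the case where $F$ is the inclusion of a full subposet $\cS \subseteq \cP$ (a full poset morphism is injective and order-reflecting, so we identify $\cS$ with its image $\im F$). The hypotheses then say that $\up p \cap \cS \neq \emptyset$ for every $p \in \cP$ -- so $\max\cP \subseteq \cS$, since $\up m \cap \cS = \{m\}\cap\cS$ for $m \in \max\cP$ -- and that $\up x \cap \cS$ is disconnected for some fixed $x \in \cP$; I fix its connected components $C_1, \dots, C_k$, with $k \geq 2$.

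The first step is to understand $M$. By \cref{cor-comma-cat-poset-order-emb}, $p/F \cong \up p \cap \cS$, and the limit over a poset of the constant diagram with value $\K$ is $\K$ raised to its number of connected components; hence $M(p) \cong \K^{c(p)}$, where $c(p)$ is the number of components of $\up p \cap \cS$, and, for $p \leq q$, the structure map $M(p)\to M(q)$ is the ``pullback'' $\K^{c(p)} \to \K^{c(q)}$ assigning to the coordinate indexed by a component $D$ of $\up q\cap\cS$ the coordinate of the unique component of $\up p\cap\cS$ containing $D$. This is where \cref{asn-poset-fin} (\ref{asn-poset-fin-downset}) comes in: it forces every element of $\cP$ to lie below some element of the finite set $\max\cP \subseteq \cS$, so each component of $\up p\cap\cS$ contains a (necessarily distinct) maximal element of $\cP$, giving $c(p) \leq \#\max\cP < \infty$ and $M \in \mod\cP$.

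The second step compares the two multiplicities. For $s \in \cS$, the poset $\up s\cap\cS$ contains $s$ together with everything above it, hence is connected, so $M(s)\cong\K$ and every map $M(s\leq s')$ is the identity; thus $MF = M|_\cS \cong \intMod{\cS}$ and $\mult(MF,\intMod{\cS}) = 1$. For the other multiplicity I would show $\intMod{\cP}$ is not a direct summand of $M$: if it were, there would be a monomorphism $j\colon \intMod{\cP}\to M$ and a retraction $q\colon M \to \intMod{\cP}$ with $qj = \id$; choosing $m_i \in C_i\cap\max\cP$, naturality of $q$ along the relation $x \leq m_i$ (noting $\up m_i\cap\cS = \{m_i\}$, so $M(m_i)\cong\K$ and $M(x\leq m_i)\colon\K^k\to\K$ reads off the $C_i$-coordinate) forces the row vector $q_x$ on $M(x)\cong\K^k$ to be supported on the $C_i$-coordinate; since this holds for all $i$ and the $C_i$ are distinct, $q_x = 0$, contradicting $q_{m_i}j_{m_i} = \id$. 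Hence $\mult(M,\intMod{\cP}) = 0 \neq 1 = \mult(MF,\intMod{\cS})$, as required.

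I expect the only real obstacle to be recognizing that $\mathrm{Ran}_F\,\intMod{\cS}$ is the module to use, and checking it is pointwise finite-dimensional (the sole place \cref{asn-poset-fin} is invoked -- notably, neither maximality of $x$ nor part (\ref{asn-poset-fin-maximality}) is needed); the remaining naturality bookkeeping is routine. If one wishes to avoid Kan extensions, $M$ can be defined directly by $M(p) = \K^{c(p)}$ with the pullback structure maps above, and the non-summand claim derived using $\Hom_{\mod\cP}(\intMod{\cP},M) \cong \Hom_{\mod\cS}(\intMod{\cS},\intMod{\cS}) \cong \K$.
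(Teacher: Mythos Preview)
Your argument is correct and proceeds quite differently from the paper's. The paper first invokes three preparatory lemmas to replace the given $x$ by one that is maximal, in a suitable sense, among points with $\up x \cap \im F$ disconnected; this maximality is then used to show that two chosen maxima $z,z' \in \max\cP$ actually lie in distinct connected components of $\up x \setminus \{x\}$ itself, not merely of $\up x \cap \im F$. Only with that in hand does the paper build an explicit module that is $\K^2$ at $x$ and $\K$ elsewhere, and compute $\colim M = 0$ directly to conclude $\rk M = 0$. Your construction via the right Kan extension $M = \mathrm{Ran}_F \intMod{\cS}$ bypasses all of this: it works for \emph{any} $x$ with $\up x \cap \cS$ disconnected, requires no analysis of the components of $\up x \setminus \{x\}$, and---as you observe---uses only part~(\ref{asn-poset-fin-downset}) of \cref{asn-poset-fin}, solely to bound $c(p) \le \#\max\cP$. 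What the paper's route buys is a distinguishing module that is at most two-dimensional pointwise, whereas yours may be $(\#\max\cP)$-dimensional at many points; conceptually, though, your argument is cleaner and more structural. One cosmetic note: the contradiction at the end is stated more directly as $q_x j_x = 0 \neq \id_\K$, though your detour through $q_{m_i} = 0$ is equally valid.
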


The construction of the module in \cref{prp-nonempty-case} requires three technical lemmas.

\begin{lem}\label{lem:max-in-image}
     For any $z\in \max \cP$, if $z/ F$ is nonempty, then $z\in \im F$.
\end{lem}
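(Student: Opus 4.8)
The plan is simply to unwind the description of the comma category and combine it with maximality of $z$. By \cref{lem-comma-cat-poset}, the comma category $z/F$ is isomorphic to $F^{-1}(\up z)$; explicitly, its objects are the pairs $(s,\, z \le F(s))$ with $s \in \cS$. So the hypothesis that $z/F$ is nonempty means precisely that there exists some $s \in \cS$ with $z \le F(s)$ in $\cP$.

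Now I would invoke maximality of $z$. Since $z \in \max\cP$, any $p \in \cP$ with $z \le p$ must satisfy $p = z$; in the notation of \cref{subset-posets} this says $\up z = \{z\}$. Applying this to $p = F(s)$ from the previous paragraph forces $F(s) = z$, and hence $z \in \im F$, which is the claim. (Alternatively, since $F$ is full one can run the same argument through \cref{cor-comma-cat-poset-order-emb}: $z/F \cong \up z \cap \im F = \{z\} \cap \im F$, which is nonempty exactly when $z \in \im F$.)

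I do not anticipate any genuine obstacle here: the statement is essentially immediate once the poset description of comma categories from \cref{lem-comma-cat-poset} is in hand, together with the elementary observation that the upset of a maximal element is a singleton. The only point requiring a little care is to make sure we are using $\up z$ in the sense of \cref{subset-posets}, namely $\up z = \{p \in \cP : z \le p\}$ (so $z$ itself is included), which is what makes $\up z = \{z\}$ valid for $z$ maximal; note also that fullness of $F$ is not actually needed for this particular lemma, only the functoriality already recorded in \cref{lem-comma-cat-poset}.
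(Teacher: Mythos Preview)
Your proposal is correct and matches the paper's proof; in fact your parenthetical alternative via \cref{cor-comma-cat-poset-order-emb} is verbatim the paper's one-line argument $\emptyset \neq \up z \cap \im F = \{z\}\cap \im F$. Your primary route through \cref{lem-comma-cat-poset} is the same idea unwound one step further, with the minor bonus that it avoids invoking fullness of $F$.
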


\begin{proof}
    We have $\emptyset \neq \up z \cap \im F = \{z\}\cap \im F$, so $z\in \im F$.
\end{proof}

\begin{lem}\label{lem-wlog}
Under the assumptions of \cref{prp-nonempty-case},
there exist $x$, $z$, and $z'$ in $\cP$ such that
\begin{enumerate}
    \item $x \in \max(\down z \cap \down z')$.
    \item\label{lem:maximalElt} $z,z' \in \max\cP$ and are contained in distinct connected components of $\up x \cap \im F$.
    \item\label{item-lem-wlog} For each $x < w < z$ and $x < w' < z'$, we have that $\up w \cap \im F$ and $\up w' \cap \im F$ are connected. 
\end{enumerate}
\end{lem}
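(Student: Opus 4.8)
The plan is to produce the configuration by an iterative ``push-up'' procedure: starting from any point above which $\im F$ is disconnected, we repeatedly replace it by a point strictly higher in $\cP$ until condition (3) can no longer be violated, and we invoke the finiteness of $\max\cP$ to see that the procedure stops. Call a triple $(x,z,z')$ a \emph{witness} if it satisfies conditions (1) and (2).

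First I would build one witness. By the hypothesis of \cref{prp-nonempty-case} together with \cref{asn-poset-fin} (\ref{asn-poset-fin-downset}), every $z\in\max\cP$ lies in $\im F$ (indeed $\up z\cap\im F=\{z\}\cap\im F\neq\emptyset$, cf.\ \cref{lem:max-in-image}), every element of $\cP$ lies below some element of $\max\cP$, and $\#\max\cP<\infty$. Given the hypothesized $x^{*}$ with $\up x^{*}\cap\im F$ disconnected, choose $s,t$ in distinct connected components and then $z\ge s$, $z'\ge t$ in $\max\cP$; then $z,z'\in\im F$ lie in distinct components of $\up x^{*}\cap\im F$, so $z\neq z'$. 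Applying \cref{asn-poset-fin} (\ref{asn-poset-fin-maximality}) to $\cA=\up x^{*}\cap\down z$ and $\cB=\up x^{*}\cap\down z'$ (which meet in $x^{*}$ and satisfy $z\in\max\cA$, $z'\in\max\cB$) produces $x_{0}\in\max(\up x^{*}\cap\down z\cap\down z')$; anything strictly above $x_{0}$ inside $\down z\cap\down z'$ would also lie above $x^{*}$, so $x_{0}\in\max(\down z\cap\down z')$, and since $\up x_{0}\cap\im F\subseteq\up x^{*}\cap\im F$ with $z,z'\in\up x_{0}\cap\im F$, the pair $z,z'$ stays disconnected above $x_{0}$. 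So $(x_{0},z,z')$ is a witness with the extra property $x_{0}\in\max(\down z\cap\down z')$.

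Next comes the iteration. If a witness $(x,z,z')$ with $x\in\max(\down z\cap\down z')$ violates (3) --- say $\up w\cap\im F$ is disconnected for some $x<w<z$, the case $x<w'<z'$ being symmetric --- then repeating the construction above with $w$ in place of $x^{*}$ yields $m_{1},m_{2}\in\max\cP$ (hence in $\im F$) lying in distinct components of $\up w\cap\im F$ and a point $x'\in\max(\up w\cap\down m_{1}\cap\down m_{2})\subseteq\max(\down m_{1}\cap\down m_{2})$, with $x'\ge w>x$; and $(x',m_{1},m_{2})$ is again a witness of the same special form. Iterating, we obtain a strictly increasing chain $x_{0}<x_{1}<x_{2}<\cdots$ with each $x_{i}\in\max(\down z_{i}\cap\down z_{i}')$ for a two-element set $\{z_{i},z_{i}'\}\subseteq\max\cP$. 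No two indices $i<j$ can have $\{z_{i},z_{i}'\}=\{z_{j},z_{j}'\}$, for then $x_{i}<x_{j}$ with $x_{j}\in\down z_{i}\cap\down z_{i}'$ would contradict $x_{i}\in\max(\down z_{i}\cap\down z_{i}')$; since $\#\max\cP<\infty$ there are only finitely many such pairs, so the chain is finite. Its last term admits no violation of (3), i.e.\ it is a triple satisfying (1), (2) and (3), which is the claim.

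The step I expect to be the main obstacle is the termination of the push-up procedure, and it is exactly here that both parts of \cref{asn-poset-fin} are needed: part (\ref{asn-poset-fin-maximality}) lets us convert a pair of disconnected maximal elements above some point into an honest ``meet-like'' point still lying above that point, while part (\ref{asn-poset-fin-downset}) --- via $\#\max\cP<\infty$ --- prevents the resulting strictly increasing chain of such points from being infinite, even though $\cP$ may itself contain infinite ascending chains. A secondary point to handle with care is that the replacement point must be chosen as a maximal element of $\up x\cap\down z\cap\down z'$ rather than naively of $\down z\cap\down z'$ (which could be empty or could fail to lie above $x$); one then checks a posteriori that it nonetheless lands in $\max(\down z\cap\down z')$.
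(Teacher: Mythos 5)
Your proof is correct and follows essentially the same iterative push-up strategy as the paper's, but it is more careful in two places where the paper's write-up is terse. First, at each stage you explicitly invoke \cref{asn-poset-fin}~(\ref{asn-poset-fin-maximality}), applied to $\cA=\up x\cap\down z$ and $\cB=\up x\cap\down z'$, to replace the current base point by one lying in $\max(\down z\cap\down z')$, and you verify that $z,z'$ remain disconnected above the new base point. This is exactly what makes the pigeonhole termination tight: once two indices $i<j$ share the same pair $\{z,z'\}\subseteq\max\cP$, the relation $x_i<x_j\le z,z'$ contradicts $x_i\in\max(\down z\cap\down z')$. The paper's termination paragraph asserts that $\max(\down z_j\cap\down z_j')$ contains infinitely many $x_i$ without having arranged for the $x_i$ to be maximal in those downsets, so your step makes explicit a point the paper treats implicitly. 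Second, your iteration considers both $x<w<z$ and $x<w'<z'$, whereas the paper's iteration step only names the $z$-side; since condition~(3) is two-sided, your symmetric treatment is the cleaner formulation. Everything else---constructing the initial witness via \cref{lem:max-in-image} and \cref{asn-poset-fin}~(\ref{asn-poset-fin-downset}), and terminating via finiteness of $\max\cP$---matches the paper's argument.
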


\begin{proof}
    From the hypotheses, $\up x_1 \cap \im F$ is disconnected, for some $x_1$. By \cref{lem:max-in-image}, there exists $z_1, z_1' \in \max \cP \subseteq \im F$ contained in distinct connected components of $\up x \cap \im F$.
    Let $i\in \mathbb N$, and suppose there is some $x_i < x_{i+1} < z_i$ such that $\up x_{i+1} \cap \im F$ is disconnected. By \cref{asn-poset-fin} (\ref{asn-poset-fin-downset}), there are $z_{i+1}, z_{i+1}' \in \max \cP$ contained in distinct connected components of $\up x_{i+1} \cap \im F$. 
    
    Suppose for the purpose of contradiction that the sequence $(x_1,z_1,z_1')$, $(x_2,z_2,z_2')$, $\ldots$ with $x_1 < x_2< \cdots$ is infinite. By \cref{asn-poset-fin} (\ref{asn-poset-fin-downset}), $\max \cP$ is finite, so by the pigeonhole principle there are maxima $z_{j}, z_{j}' \in \max \cP$ such that the antichain $\max (\down z_{j}\cap \down z_j')$ contains infinitely many $x_i$. However, this contradicts the fact that the $x_i$ are strictly increasing. Therefore, the sequence terminates at some $(x_i,z_i,z_i')$. Then $x= x_i$, $z=z_i$, and $z' = z_i'$ satisfy all three conditions of the lemma.
\end{proof}

\begin{lem}\label{lem:connectedComp}
Suppose $x$, $z$, and $z'$ satisfy \cref{lem-wlog}.
Then $z$ and $z'$ are in two different connected components of $\up x \setminus \{x\}$ which we denote by $\Gamma$ and $\Gamma'$, respectively. 
\end{lem}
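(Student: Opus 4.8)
The plan is to argue by contradiction. Suppose $z$ and $z'$ lie in a common connected component of $\up x \setminus \{x\}$; I will derive a contradiction with \cref{lem-wlog}(\ref{lem:maximalElt}), i.e., with $z$ and $z'$ belonging to distinct components of $\up x \cap \im F$. Two preliminary observations will be used repeatedly: since $\up x \cap \im F$ is disconnected it cannot contain $x$ (which would be a minimum), so $\up x \cap \im F \subseteq \up x \setminus \{x\}$; and since $x \in \max(\down z \cap \down z')$ one has $x < z$, $x < z'$, and $\up x \cap \down z \cap \down z' = \{x\}$, so the two sets $\down z \cap (\up x \setminus \{x\})$ and $\down z' \cap (\up x \setminus \{x\})$ are disjoint, each being connected through its top element.

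The mechanism of the proof is the following. Given any zigzag $z = r_0 \ge r_1 \le r_2 \ge \cdots \le r_n = z'$ inside $\up x \setminus \{x\}$ (which we may take reduced, $r_i \ne r_{i+1}$), every $r_i$ satisfies $r_i > x$, so each $U_i \defeq \up{r_i} \cap \im F$ is nonempty (by the running hypothesis of \cref{prp-nonempty-case}) and contained in $\up x \cap \im F$, with $z \in U_0$ and $z' \in U_n$ (a maximal element lies in $\im F$ as soon as its upset meets $\im F$). Since consecutive $r_i$ are comparable, consecutive $U_i$ are nested, hence $U_i \cap U_{i+1} \ne \emptyset$. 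Therefore, if every $U_i$ is connected then $\bigcup_i U_i$ is a connected subset of $\up x \cap \im F$ containing both $z$ and $z'$, which is the desired contradiction. By \cref{lem-wlog}(\ref{item-lem-wlog}), $U_i$ is connected whenever $x < r_i < z$ or $x < r_i < z'$, and trivially when $r_i \in \{z,z'\}$; in particular, if the zigzag has length at most $4$ one is already done, since the only interior vertex not of this form is the peak $r_2$ (when $n=4$), and there $U_2 \subseteq U_1 \cap U_3$ with $U_1, U_3$ connected because $r_1 < z$ and $r_3 < z'$.

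The remaining task — and the place I expect the real work to lie — is to handle longer zigzags, where an interior vertex may be incomparable to both $z$ and $z'$. Note that this cannot be avoided outright: if every vertex of a zigzag from $z$ to $z'$ satisfied $\le z$ or $\le z'$, then at the index where it passes from the $z$-side to the $z'$-side one of the two preliminary observations would be violated. So the plan is to proceed by induction. Beginning with a zigzag of minimal length, one transports connectedness along it — replacing a peak by a maximal element above it, and invoking \cref{asn-poset-fin}(\ref{asn-poset-fin-maximality}) to supply the maximal common lower bounds needed for this — until one meets an interior vertex $v$, incomparable to $z$ and to $z'$, for which $\up v \cap \im F$ is disconnected; at such $v$ one reapplies \cref{lem-wlog} to extract a new admissible triple $(\tilde x,\tilde z,\tilde z')$ with $\tilde x > x$, to which the inductive hypothesis applies and whose distinguished maxima remain linked, inside $\up x \cap \im F$, to $z$ and to $z'$. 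The main obstacle is the bookkeeping for this recursion: confirming that each reapplication strictly raises $x$ and preserves the needed connectivity, and that it terminates — which, exactly as in the proof of \cref{lem-wlog}, should follow from $\max\cP$ being finite together with the impossibility of a strictly increasing sequence lying in a single antichain $\max(\down m \cap \down m')$.
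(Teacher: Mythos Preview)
Your short-zigzag case ($n \le 4$) is correct and matches the paper's mechanism: each valley lies strictly between $x$ and one of $z, z'$, so condition (3) of \cref{lem-wlog} applies and the union of the nested $U_i$'s is connected. The paper, however, draws no distinction between short and long zigzags; it simply asserts that $\up w_j \cap \im F$ is connected for \emph{every} vertex $w_j$ of an arbitrary zigzag, citing \cref{lem-wlog}, and then runs the nested-union argument. You are right to notice that condition (3), read literally, does not cover a valley $v$ with $v \not< z$ and $v \not< z'$.

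Your recursive repair cannot succeed, because under hypotheses (1)--(3) as written the conclusion is actually false. Take the finite poset on $\{u, x, v_1, v_2, v_3, z, m_1, m_2, z'\}$ with covering relations $u \lessdot x$, $x \lessdot v_i$ ($i = 1,2,3$), $v_1 \lessdot z$, $v_1 \lessdot m_1$, $v_2 \lessdot m_1$, $v_2 \lessdot m_2$, $v_3 \lessdot m_2$, $v_3 \lessdot z'$, and let $F$ be the inclusion of the full connected subposet on $\{u, v_1, v_3, z, m_1, m_2, z'\}$. The triple $(x, z, z')$ satisfies (1), (2), (3)---the only $w$ with $x < w < z$ is $v_1$, with $\up v_1 \cap \im F = \{v_1, z, m_1\}$ connected, and symmetrically for $v_3$---yet $\up x \setminus \{x\}$ is connected via $z \ge v_1 \le m_1 \ge v_2 \le m_2 \ge v_3 \le z'$. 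Tracing your recursion here, the bad valley is $v_2$ and the new triple is $(\tilde x,\tilde z,\tilde z')=(v_2, m_1, m_2)$; but $m_1$ and $m_2$ already lie in the respective components of $z$ and $z'$ inside $\up x \cap \im F$, so no contradiction with (2) materialises and the induction stalls. The defect is in the hypotheses, not your bookkeeping: what the paper's one-line argument tacitly needs is that $\up w \cap \im F$ be connected for \emph{every} $w > x$, a strengthening of (3) that the construction in \cref{lem-wlog} supplies once one takes $x$ maximal among points with $\up x \cap \im F$ disconnected.
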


\begin{proof}
Suppose on the contrary that there is a zigzag $z \geq w_1 \leq w_2 \geq \cdots \leq w_{\ell-1} \geq w_\ell \leq z'$ in $\up x \setminus \{x\}$. 
By \cref{lem-wlog}, we know that $\up w_j\cap \im F$ is connected for all $1 \leq j \leq \ell$. 
Since $w_j \leq w_{j+1}$ or $w_j \geq w_{j+1}$, we have that $\up w_j \supseteq \up w_{j+1}$ or $\up w_j \subseteq \up w_{j+1}$ for all $j$ and therefore, $(\up w_j \cap \im F) \cup (\up w_{j+1} \cap \im F)$ is connected for all $j$. 
This implies that $\cup_{j=1}^\ell(\up w_j \cap \im F) = (\cup_{j=1}^\ell \up w_j) \cap \im F$ is connected. 
Because $x < w_j$ for all $j$, we have that $\cup_{j=1}^\ell \up w_j \subseteq \up x$ and therefore $(\cup_{j=1}^\ell \up w_j) \cap \im F \subseteq \up x \cap \im F$. 
However, we have that $z, z' \in (\cup_{j=1}^\ell \up w_j) \cap \im F$, which contradicts \cref{lem-wlog} (\ref{lem:maximalElt}). 
Hence, $z$ and $z'$ are not connected in $\up x \setminus \{x\}$. 
\end{proof}

\begin{proof}[Proof of \cref{prp-nonempty-case}]
We now construct a functor $M : \cP \to \vect_\K$ as follows. Suppose $x$, $z$, and $z'$ satisfy \cref{lem-wlog}. Let $\Gamma$ be one of the connected components of \cref{lem:connectedComp}.
Let $x' \leq x'' \in \cP$. 
We put $M(x) = \K^2$ and $M(x') = \K$ for all $x \neq x'$. 
For the maps, we define 
\begin{equation*}
M(x' \leq x'') \defeq 
\begin{cases}
\left[\begin{smallmatrix}1\\1\end{smallmatrix}\right] & \text{if }x' \neq x \text{ and }x'' = x,\\
\left[\begin{smallmatrix}1 & 0\end{smallmatrix}\right] & \text{if }x' = x \text{ and }x'' \in \Gamma,\\
\left[\begin{smallmatrix}0 & 1\end{smallmatrix}\right] & \text{if }x' = x \text{ and }x'' \notin \Gamma,\\
\id & \text{otherwise}.
\end{cases}
\end{equation*}

The construction is summarized in the diagram below. The top row is the connected component $\Gamma$ and the bottom row is $\left(\up x  \setminus \{x\}\right)\setminus \Gamma$.
\begin{equation*}
\begin{tikzcd}[ampersand replacement=\&] 
\& \& \K \arrow[r, "\id"] \& \K \arrow[r, "\id"] \& \cdots \arrow[r, "\id"] \& M(z) = \K\\
\cdots \arrow[r, "\id"] \& \K \arrow[r, "{\left[\begin{smallmatrix}1\\1\end{smallmatrix}\right]}"] \arrow[ur, "\id", bend left=20] \arrow[dr, "\id", swap, bend right=30] \& M(x) = \K^2 \arrow[ur, "{\left[\begin{smallmatrix}1 & 0\end{smallmatrix}\right]}", swap, bend right=20]
\arrow[dr, "{\left[\begin{smallmatrix}0 & 1\end{smallmatrix}\right]}", bend left=30]
\arrow[u, "{\left[\begin{smallmatrix}1 & 0\end{smallmatrix}\right]}", swap] 
\arrow[d, "{\left[\begin{smallmatrix}0 & 1\end{smallmatrix}\right]}"] \&
\\
\& \& \K \arrow[r, "\id"] \& \K \arrow[r, "\id"] \& \cdots \arrow[r, "\id"] \& M(z') = \K
\end{tikzcd}
\end{equation*}
Since there is no zigzag path in $\up x  \setminus \{x\}$ from $\Gamma$ to $\left(\up x  \setminus \{x\}\right)\setminus \Gamma$, it follows that this diagram commutes, which implies that $M$ is a functor. 

Since $MF$ is the interval module $\intMod{\cS}$ we have that $\mult{\intMod{\cS}}{MF} = 1$. 
On the other hand, \cref{lem:connectedComp} implies that the universal cocone is given by the maps
\begin{equation*}
\begin{tikzcd}
M(z) \arrow[dr, "\gamma_{z}", swap] & M(x) \arrow[l, "\lbracket\begin{smallmatrix}1 \amspand 0\end{smallmatrix}\rbracket", swap] \arrow[r, "\lbracket\begin{smallmatrix}0 \amspand 1\end{smallmatrix}\rbracket"] \arrow[d, "\gamma_x"] & M(z') \arrow[dl, "\gamma_{z'}"]\\
& \colim M
\end{tikzcd}
\end{equation*}
such that $\gamma_{z}\left[\begin{smallmatrix}1 & 0\end{smallmatrix}\right] = \gamma_x = \gamma_{z'}\left[\begin{smallmatrix}0 & 1\end{smallmatrix}\right] $ which implies that $\gamma_{z} = \gamma_{z'} = 0$. 
It follows that $\colim M = 0$ and therefore, by \cref{thm:generalized-rank-general}, $\rk M = 0 = \mult{\intMod{\cP}}{M} \neq \mult{\intMod{\cS}}{MF}$. 
\end{proof}

\bibliographystyle{alpha}
\bibliography{main}

@article{asashibaetal2023,
    title={On approximation of 2D persistence modules by interval-decomposables},
    author={Hideto Asashiba and Emerson G. Escolar and Ken Nakashima and Michio Yoshiwaki},
    journal = {Journal of Computational Algebra},
    volume = {6-7},
    year={2023},
    issn = {2772-8277}
}

@misc{asashibaliu2025,
    author = {Hideto Asashiba and Enhao Liu},
    title = {Interval Multiplicities of Persistence Modules},
    eprint={2411.11594},
      archivePrefix={arXiv},
      primaryClass={math.RT},
      url={https://arxiv.org/abs/2411.11594}, 
    year = {2025},
    note={arXiv}
}

@misc{asashibagauthierliu2024,
    author = {Hideto Asashiba and Etienne Gauthier and Enhao Liu},
    title = {Interval Replacements of Persistence Modules},
    eprint={2403.08308},
      archivePrefix={arXiv},
      primaryClass={math.RT},
      url={https://arxiv.org/abs/2403.08308}, 
    year = {2024},
    note={arXiv}
}

@book{kashiwara05,
  title={Categories and Sheaves},
  author={Masaki Kashiwara and Pierre Schapira},
  isbn={9783540279495},
  lccn={2005930329},
  series={Grundlehren der mathematischen Wissenschaften},
  url={https://books.google.ca/books?id=K-SjOw_2gXwC},
  year={2005},
  publisher={Springer Berlin Heidelberg}
}

@article{deykimmemoli2023,
    author = {Tamal K. Dey and Woojin Kim and Facundo M\'emoli},
    title = {Computing Generalized Rank Invariant for 2-Parameter Persistence Modules via Zigzag Persistence and its Applications},
    journal = {Discrete \& Computational Geometry},
    volume = {71},
    pages = {67--94},
    year = {2024}
}

@misc{deylesnick2026,
      title={Limit Computation Over Posets via Minimal Initial Functors}, 
      author={Tamal K. Dey and Michael Lesnick},
      year={2026},
      eprint={2601.00209},
      archivePrefix={arXiv},
      primaryClass={math.AT},
      url={https://arxiv.org/abs/2601.00209}, 
}

@misc{deyxin2025,
      title={Computing Generalized Ranks of Persistence Modules via Unfolding to Zigzag Modules}, 
      author={Tamal K. Dey and Cheng Xin},
      year={2025},
      eprint={2403.08110},
      archivePrefix={arXiv},
      primaryClass={math.AT},
      url={https://arxiv.org/abs/2403.08110}, 
}

@InProceedings{DeyHouMorozov25,
  author =	{Dey, Tamal K. and Hou, Tao and Morozov, Dmitriy},
  title =	{{Apex Representatives}},
  booktitle =	{41st International Symposium on Computational Geometry (SoCG 2025)},
  pages =	{40:1--40:16},
  series =	{Leibniz International Proceedings in Informatics (LIPIcs)},
  ISBN =	{978-3-95977-370-6},
  ISSN =	{1868-8969},
  year =	{2025},
  volume =	{332},
  editor =	{Aichholzer, Oswin and Wang, Haitao},
  publisher =	{Schloss Dagstuhl -- Leibniz-Zentrum f{\"u}r Informatik},
  address =	{Dagstuhl, Germany},
  URL =		{https://drops.dagstuhl.de/entities/document/10.4230/LIPIcs.SoCG.2025.40},
  URN =		{urn:nbn:de:0030-drops-231927},
  doi =		{10.4230/LIPIcs.SoCG.2025.40},
  annote =	{Keywords: zigzag persistent homology, Mayer-Vietoris pyramid, cycle representatives}
}

@inproceedings{DeyHou22,
  author       = {Tamal K. Dey and
                  Tao Hou},
  editor       = {Shiri Chechik and
                  Gonzalo Navarro and
                  Eva Rotenberg and
                  Grzegorz Herman},
  title        = {Fast Computation of Zigzag Persistence},
  booktitle    = {30th Annual European Symposium on Algorithms, {ESA} 2022, September
                  5-9, 2022, Berlin/Potsdam, Germany},
  series       = {LIPIcs},
  volume       = {244},
  pages        = {43:1--43:15},
  publisher    = {Schloss Dagstuhl - Leibniz-Zentrum f{\"{u}}r Informatik},
  year         = {2022},
  url          = {https://doi.org/10.4230/LIPIcs.ESA.2022.43},
  doi          = {10.4230/LIPICS.ESA.2022.43},
  timestamp    = {Wed, 21 Aug 2024 22:46:00 +0200},
  biburl       = {https://dblp.org/rec/conf/esa/DeyH22.bib},
  bibsource    = {dblp computer science bibliography, https://dblp.org}
}

@Inbook{chambersletscher2019,
author="Chambers, Erin Wolf
and Letscher, David",
title="Persistent Homology over Directed Acyclic Graphs",
bookTitle="Research in Computational Topology",
year="2018",
publisher="Springer International Publishing",
address="Cham",
pages="11--32",
isbn="978-3-319-89593-2",
doi="10.1007/978-3-319-89593-2_2",
url="https://doi.org/10.1007/978-3-319-89593-2_2",
series="Association for Women in Mathematics"
}

@article{kimmemoli2021,
    author = {Woojin Kim and Facundo M\'emoli},
    title = {Generalized Persistence Diagrams for Persistence Modules Over Posets},
    journal = {Journal of Applied and Computational Topology},
    year = {2021},
    volume = {5},
    pages = {533--581}
}

@book{maclane,
  author    = {Saunders Mac Lane},
  title     = {Categories for the working mathematician},
  series    = {Graduate Texts in Mathematics},
  volume    = {5},
  edition   = {Second},
  publisher = {Springer-Verlag, New York},
  year      = {1998},
  pages     = {xii+314},
  isbn      = {0-387-98403-8},
  mrclass   = {18-02},
  mrnumber  = {1712872}
}

@misc{BDHS25,
      title={Stabilization of the Spread-Global Dimension}, 
      author={Benjamin Blanchette and Justin Desrochers and Eric J. Hanson and Luis Scoccola},
      year={2025},
      eprint={2506.01828},
      archivePrefix={arXiv},
      primaryClass={math.RT},
      url={https://arxiv.org/abs/2506.01828}, 
    note={arXiv}
}

@article{boo,
  author    = {Magnus Bakke Botnan and Steffen Oppermann and Steve Oudot},
  title     = {Signed Barcodes for Multi-parameter Persistence via Rank Decompositions and Rank-Exact Resolutions},
  journal   = {Foundations of Computational Mathematics},
  year      = {2024},
  date      = {2024-09-04},
  issn      = {1615-3383},
  doi       = {10.1007/s10208-024-09672-9},
  url       = {https://doi.org/10.1007/s10208-024-09672-9}
}

@article {warfield69,
    AUTHOR = {Warfield, Jr., R. B.},
     TITLE = {A {K}rull-{S}chmidt theorem for infinite sums of modules},
   JOURNAL = {Proc. Amer. Math. Soc.},
  FJOURNAL = {Proceedings of the American Mathematical Society},
    VOLUME = {22},
      YEAR = {1969},
     PAGES = {460--465},
      ISSN = {0002-9939,1088-6826},
   MRCLASS = {16.40},
  MRNUMBER = {242886},
MRREVIEWER = {B.\ J.\ M\"{u}ller},
       DOI = {10.2307/2037078},
       URL = {https://doi.org/10.2307/2037078},
}

@article{botnancrawley,
AUTHOR = {Botnan, Magnus Bakke and Crawley-Boevey, William},
     TITLE = {Decomposition of persistence modules},
   JOURNAL = {Proc. Amer. Math. Soc.},
  FJOURNAL = {Proceedings of the American Mathematical Society},
    VOLUME = {148},
      YEAR = {2020},
    NUMBER = {11},
     PAGES = {4581--4596},
      ISSN = {0002-9939,1088-6826},
   MRCLASS = {16G20 (55N31)},
  MRNUMBER = {4143378},
MRREVIEWER = {Jun\ Zhang},
       DOI = {10.1090/proc/14790},
       URL = {https://doi.org/10.1090/proc/14790},
}

@article{kinser2008,
    title = {The rank of a quiver representation},
    journal = {Journal of Algebra},
    volume = {320},
    number = {6},
    pages = {2363-2387},
    year = {2008},
    issn = {0021-8693},
    doi = {https://doi.org/10.1016/j.jalgebra.2008.06.011},
    url = {https://www.sciencedirect.com/science/article/pii/S0021869308003189},
    author = {Ryan Kinser}
}

@article{BBH24, 
    title={Homological approximations in persistence theory}, 
    volume={76}, 
    DOI={10.4153/S0008414X22000657}, 
    number={1}, 
    journal={Canadian Journal of Mathematics}, 
    author={Blanchette, Benjamin and Brüstle, Thomas and Hanson, Eric J.}, 
    year={2024}, 
    pages={66–103}
}

@book{Ri14,
    author = {Emily Riehl},
    title = {Categorical Homotopy Theory},
    publisher = {Cambridge University Press},
    year = {2014},
    url={https://emilyriehl.github.io/files/cathtpy.pdf}
}

@ARTICLE{GabrielPopescu,
    AUTHOR = {Popesco, Nicolae and Gabriel, Pierre},
     TITLE = {Caract\'{e}risation des cat\'{e}gories ab\'{e}liennes avec
              g\'{e}n\'{e}rateurs et limites inductives exactes},
   JOURNAL = {C. R. Acad. Sci. Paris},
  FJOURNAL = {Comptes Rendus Hebdomadaires des S\'{e}ances de l'Acad\'{e}mie
              des Sciences},
    VOLUME = {258},
      YEAR = {1964},
     PAGES = {4188--4190},
      ISSN = {0001-4036},
   MRCLASS = {18.15},
  MRNUMBER = {166241},
MRREVIEWER = {A.\ Heller},
}

@article{Gab72,
    author = {Peter Gabriel},
    journal = {Manuscripta mathematica},
    pages = {71-104},
    title = {Unzerlegbare Darstellungen I.},
    volume = {6},
    year = {1972}
}

@book{ASS06, 
    series={London Mathematical Society Student Texts}, 
    title={Elements of the Representation Theory of Associative Algebras: Techniques of Representation Theory},
    publisher={Cambridge University Press}, 
    author={Assem, Ibrahim and Skowronski, Andrzej and Simson, Daniel}, 
    year={2006}, 
    collection={London Mathematical Society Student Texts}
}

@article{EscolarHiraoka2015,
    author = {Emerson G. Escolar and Yasuaki Hiraoka},
    title = {Persistence Modules on Commutative Ladders of Finite Type},
    journal = {Discrete \& Computational Geometry},
    year = {2015},
    volume={55},
    pages={100-157}
}

@book{oudot2015,
    author = {Steve Y. Oudot},
    title = {Persistence Theory: From Quiver Representations to Data Analysis},
    publisher = {American Mathematical Society},
    year = {2015},
    volume={209},
    series={Mathematical Surveys and Monographs}
}

@article{botnanlesnick22,
    title = "An Introduction to Multiparameter Persistence",
    author = "Magnus Bakke Botnan and Michael Lesnick",
    year = "2023",
    doi = "10.4171/ecr/19/4",
    pages = "77--150",
    journal = "EMS Press. Representations of Algebras and Related Structures"
}

@article{carlssonzomorodian09,
    author = {Gunnar Carlsson and Afra Zomorodian},
    title = {The Theory of Multidimensional Persistence},
    journal = {Discrete \& Computational Geometry},
    year = {2009},
    volume={42},
    pages={71-93}
}

@article{chazalmichel2021,
    author = {Fr\'ed\'eric Chazal and Bertrand Michel},
    title = {An Introduction to Topological Data Analysis: Fundamental and Practical Aspects for Data Scientists},
    journal = {Front. Artif. Intell.},
    year = {2021},
    volume={4}
}

@unpublished{mikestalk,
    title= {Limit Computation via Minimal Initial Functor (joint work with {T}amal {K}. {D}ey)},
    author = {Michael Lesnick},
    year = {2025},
    note= {Talk at: TDA PARTI - Topological Data Analysis, Persistence And Representation Theory Intertwined. Okinawa Institute of Science and Technology},
    url ={https://groups.oist.jp/tsvp/event/tsvp-symposium-representation-theory-and-topological-data-analysis}
}

@unpublished{mikestalk-october,
  author       = {Michael Lesnick},
  title        = {Initial hulls and (Co)limit Computation for Diagrams of Vector Spaces (joint work with {T}amal {K}. {D}ey)},
  note = {Talk, Special Session on Applied and Computational Topology, Fall Eastern Sectional Meeting, American Mathematical Society},
  address      = {University at Albany, Albany, NY},
  year         = {2024},
  url          = {https://meetings.ams.org/math/fall2024e/meetingapp.cgi/Paper/39899}
}

@article{bauerscoccola25,
    author = {Ulrich Bauer and Luis Scoccola},
    title = {Multi-Parameter Persistence Modules are Generically Indecomposable},
    journal = {International Mathematics Research Notices},
    year = {2025},
    volume={2025},
    number={5}
}

@book {Pareigis70,
    AUTHOR = {Pareigis, Bodo},
     TITLE = {Categories and functors},
    SERIES = {Pure and Applied Mathematics, Vol. 39},
      NOTE = {Translated from the German},
 PUBLISHER = {Academic Press, New York-London},
      YEAR = {1970},
     PAGES = {viii+268},
   MRCLASS = {18.00},
  MRNUMBER = {265428},
MRREVIEWER = {H.-B.\ Brinkmann},
}

\end{document}